	\titleformat{\section}[block]{\Large\bfseries\filcenter}{\thesection}{1em}{}
\theoremstyle{plain}
\renewcommand*\thesection{\arabic{section}}
\numberwithin{equation}{section} 
\theoremstyle{plain}
\newtheorem{thm}{Theorem}
\newtheorem{lemma}[thm]{Lemma}
\numberwithin{thm}{section} 
\newtheorem{corollary}{Corollary}
\newtheorem{theorem}{Theorem}
\newtheorem{proposition}{Proposition}
\theoremstyle{definition}
\newtheorem{remark}[thm]{Remark}
\newtheorem{definition}[thm]{Definition}
\newcommand{\thistheoremname}{}
\newtheorem{genericthm}[equation]{\thistheoremname}
\newcommand{\thistheoremnames}{}
\newtheorem*{genericthms}{\thistheoremnames}
\newenvironment{para*}[1]
  {\renewcommand{\thistheoremnames}{#1}%
   \begin{genericthms}}
  {\end{genericthms}}
\let\expandafter\oldproof\csname\string\proof\endcsname
\let\oldendproof\endproof
\renewenvironment{proof}[1][\proofname]{%
  \oldproof[\upshape \bfseries #1]%
}{\oldendproof}
\def\@makechapterhead#1{%
  \vspace*{50\p@}%
  {\parindent \z@ \raggedright \normalfont
    \interlinepenalty\@M
    \Huge\bfseries  \thechapter.\quad #1\par\nobreak
    \vskip 40\p@
  }}
\newcommand{\reqnomode}{\tagsleft@false}
\def \d{\,{\rm d}}
\def\dist{\,{\rm dist}}
\def\supp{\,{\rm supp }}
\newcommand{\const}{\operatorname{const}}
\DeclareRobustCommand*{\bfseries}{%
  \not@math@alphabet\bfseries\mathbf
  \fontseries\bfdefault\selectfont
  \boldmath
}
\newlength{\defbaselineskip}
\newcommand{\N}{\mathbb{N}}
\newcommand\eps\varepsilon
\def\mean#1{\mathchoice%
          {\mathop{\kern 0.2em\vrule width 0.6em height 0.69678ex depth -0.58065ex
                  \kern -0.8em \intop}\nolimits_{\kern -0.4em#1}}%
          {\mathop{\kern 0.1em\vrule width 0.5em height 0.69678ex depth -0.60387ex
                  \kern -0.6em \intop}\nolimits_{#1}}%
          {\mathop{\kern 0.1em\vrule width 0.5em height 0.69678ex
              depth -0.60387ex
                  \kern -0.6em \intop}\nolimits_{#1}}%
          {\mathop{\kern 0.1em\vrule width 0.5em height 0.69678ex depth -0.60387ex
                  \kern -0.6em \intop}\nolimits_{#1}}}
\numberwithin{equation}{section}
\newcommand\blfootnote[1]{%
  \begingroup
  \renewcommand\thefootnote{}\footnote{#1}%
  \addtocounter{footnote}{-1}%
  \endgroup
}
\def\eqn#1$$#2$${\begin{equation}\label#1#2\end{equation}}
\newcommand\R{\mathbb{R}}
\newcommand{\F}{\mathscr F}
\def \tp{\textup}
\def \p{\partial}
\def \e{\varepsilon}
\newcommand\restr[2]{{
  \left.\kern-\nulldelimiterspace 
  #1 
  \vphantom{|} 
  \right|_{#2} 
  }}
\title{Global higher integrability for minimisers of convex functionals with (\MakeLowercase{p},\MakeLowercase{q})-growth}
\author{Lukas Koch}
\affil[1]{\small University of Oxford, Andrew Wiles Building Woodstock Rd, Oxford OX2 6GG, United Kingdom 
\protect \\
  {\tt{kochl@maths.ox.ac.uk}}
  \vspace{1em} \ }
\begin{document}
\maketitle
\begin{abstract}
We prove global $W^{1,q}(\Omega,\R^m)$-regularity for minimisers of convex functionals of the form $\F(u)=\int_\Omega F(x,Du)\d x$. $W^{1,q}(\Omega,\R^m)$ regularity is also proven for minimisers of the associated relaxed functional. Our main assumptions on $F(x,z)$ are a uniform $\alpha$-H\"older continuity assumption in $x$ and controlled $(p,q)$-growth conditions in $z$ with $q<\frac{(n+\alpha)p}{n}$.
\end{abstract}

\blfootnote{
\emph{Acknowledgements.} L.K. was supported by the Engineering and Physical Sciences Research Council [EP/L015811/1].
}

\section{Introduction and results}
We study minimisation problems of the form
\begin{align*}
\min_{u\in W^{1,p}_g(\Omega,\R^m)} \F(u) \qquad \text{ where }
\F(u)=\int_\Omega F(x,Du)- f\cdot u\d x.
\end{align*}
Here $F(x,z)$ is a convex functional with $(p,q)$-growth in $z$ satisfying a natural uniform $\alpha$-H\"older condition in $x$. Furthermore $\Omega$ is a sufficiently regular domain in $\R^n$, ${f\in L^{q'}(\Omega,\R^m)}$ and $g$ is a sufficiently regular boundary datum. We assume throughout that $1<p\leq q<\infty$ and will impose further restrictions as necessary. For precise definitions we refer to Section \ref{sec:prelim} where we also explain our notation. We prove global higher integrability properties of the minimiser as well as global higher integrability properties of minimisers of a relaxed functional related to $\F(\cdot)$. Our results concern mostly the vectorial case $n,m\geq 2$. The one-dimensional case $n=1$ is special and stronger results apply.

The study of elliptic systems and functionals in the case where $p=q$ is well established with a very long list of important results. For an introduction and references we refer to \cite{Giaquinta1983} and \cite{Giusti2003}. 

In order to state our results precisely and to compare them with the literature on functionals with $(p,q)$-growth we briefly state our assumptions precisely.

\textit{Let $0<\alpha\leq 1$. We assume that $F(x,z)$ is measurable in $x$, continuously differentiable in $z$ and moreover satisfies 
\begin{gather}\label{def:bounds1}
\nu(\mu^2+|z|^2+|w|^2)^\frac{p-2}{2}\leq\frac{F(x,z)-F(x,w)-\langle \partial_z F(x,w),z-w\rangle}{|z-w|^2}\tag{H1}\\
\label{def:bounds2}
|F(x,z)|\lesssim (1+|z|^2)^\frac q 2\tag{H2}\\
\label{def:bounds3} |F(x,z)-F(y,z)|\leq \Lambda |x-y|^\alpha\left(1+ |z|^2\right)^\frac q 2.\tag{H3}
\end{gather}
for some $\mu, \nu, \Lambda>0$, all $z,w\in \R^{n\times m}$ and almost every $x,y\in \Omega$, where $1\leq p\leq q$. If $p<n$ further suppose $q\leq \frac{np}{n-p}$.}

 Note that under these assumptions $F(x,z)$ is convex. We also remark that our conditions clearly imply the following two bounds for $z\in \R^{n\times m}$ and almost every $x\in\Omega$,
\begin{gather}
\label{eq:pnormlower}|z|^p-1\lesssim F(x,z)\\
\label{eq:upperBoundDerivF} |\p_z F(x,z)|\lesssim \Lambda \left(1+|z|^2\right)^\frac{q-1} 2.
\end{gather}

We refer to Section \ref{sec:examples} for examples of functionals satisfying these assumptions. Alternative assumptions to \eqref{def:bounds1} that allow for anisotropic or $p(x)$-growth and under  which the conclusions of Theorem \ref{thm:regularityRelaxed} and Theorem \ref{thm:nonautonReg} still hold are explored in Section \ref{sec:alternatives}.

Let us also give a precise definition of the notions of minimisers we are interested in:
\begin{definition}
$u\in W_g^{1,1}(\Omega)$ is a \textbf{(pointwise) minimiser} of $\F(\cdot)$ in the class $W^{1,p}_g(\Omega)$ if it holds that ${F(x,Du)\in L^1(\Omega)}$ and
\begin{align*}
\int_\Omega F(x,Du)-f\cdot u\d x \leq \int_{\Omega} F(x,Du+D\phi)- f\cdot(u+\phi)\d x
\end{align*}
for all $\phi \in W^{1,1}_0(\Omega)$. 

$u\in W^{1,p}_g(\Omega)$ is a \textbf{$W^{1,q}$-relaxed minimiser} (usually referred to as a relaxed minimiser) of $\F(\cdot)$ in the class $W^{1,p}_g(\Omega)$ if $u$ minimises the relaxed functional
\begin{align}\label{def:relaxedFunctional}
\overline \F(v) = \inf \left\{\,\liminf_{j\to\infty} \int_\Omega F(D v_j)-f\cdot v_j: (v_j)\subset Y, v_j\rightharpoonup v \text{ weakly in } X\,\right\}
\end{align}
amongst all $v\in X= W^{1,p}_g(\Omega)$ where $Y=W^{1,q}_g(\Omega)$. 
\end{definition} 
Note that for a pointwise minimiser, $\F(u)<\infty$, 
coupled with (\ref{def:bounds1}), Sobolev embedding and Young's inequality gives $Du\in L^p(\Omega)$. We remark further that by weak lower semicontinuity of $\F(\cdot)$ in $W^{1,q}(\Omega)$, for $u\in W^{1,q}_g(\Omega)$, $\overline \F(u)=\F(u)$. 

The study of regularity theory for minimisers in the case $p<q$ started with the seminal papers \cite{Marcellini1989}, \cite{Marcellini1991}. We don't aim to give a complete overview of the theory here, focusing on results directly relevant to this paper. We refer to \cite{Mingione2006} for a good overview and further references. A particular focus of research have been the special cases of the double-phase functional $F(x,z)=|z|^p+a(x)|z|^q$ and functionals with $p(x)$-growth. For an introduction and further references with regards to these special cases we refer to the introduction of \cite{Baroni2018} and \cite{Diening2011}, \cite{Radulescu2015}, respectively.  Already in the scalar $m=1$ autonomous case $F(x,z)=F(z)$ counterexamples show that in order to prove regularity of minimisers $p$ and $q$ may not be too far apart \cite{Giaquinta1987}, \cite{Marcellini1989}, \cite{Hong1992}. We stress that the counterexamples only apply to pointwise minimisers (as opposed to relaxed minimisers, see Definition \ref{def:relaxedFunctional}). A crucial first step to proving interior regularity results is to obtain $W^{1,q}_{\tp{loc}}$ regularity of minimisers. Once $W^{1,q}_{\tp{loc}}$ regularity is obtained, further regularity results such as partial regularity may be obtained, see for example \cite{Acerbi1994}, \cite{PassarellidiNapoli1996}. We list the to our knowledge best available $W^{1,q}_{\tp{loc}}$ regularity results for general autonomous convex functionals with $(p,q)$-growth (when $n\geq 2$): Under natural growth conditions it suffices to assume $q<\frac{np}{n-1}$ \cite{Carozza2013} in order to obtain $W^{1,q}_{\tp{loc}}$ regularity of minimisers. To obtain the same conclusion under controlled growth conditions the gap may be widened to $q<p\left(1+\frac 2 {n-1}\right)$ \cite{Schaffner2020} and under controlled duality growth conditions it suffices to take $q<\frac{np}{n-2}$ (if $n=2$ it suffices to take $q<\infty$) \cite{DeFilippis2020}. We note that in all three cases higher integrability goes hand in hand with a higher differentiability result. Partial $C^{1,\alpha}$ regularity of pointwise and relaxed minimisers (with $Y=W^{1,q}_{\tp{loc}}$) for autonomous quasiconvex functionals has been obtained in \cite{Schmidt2008,Schmidt2008a,Schmidt2009} under natural growth conditions with $q<1+\frac{\min (2,p)}{2n}$. The borderline-case $p=n-1$, $q=n$ was studied in \cite{Carozza2020}. Finally, we refer to \cite{Bogelein2013} for results and references in the case of parabolic systems with $(p,q)$-growth.

We now turn to the case of non-autonomous functionals $F(x,z)$, convex and with $(p,q)$-growth in $z$, while satisfying a uniform $\alpha$-H\"older condition in $x$. For $n\geq 2$ counterexamples to $W^{1,q}$ regularity with $1<p<n<n+\alpha<q$ are due to \cite{Esposito2004}, see also \cite{Fonseca2004}. Recent work suggests that the condition $p<n<q$ may be removed \cite{Diening2020}. If $q<\frac{(n+\alpha)p}{n}$, it was proven in \cite{Esposito2004} for many standard examples that minimisers enjoy $W^{1,q}_{\tp{loc}}$ regularity. Using \cite{Esposito2019} the result may be extended to functionals satisfying in addition
the following condition:

\textit{there is $\e_0>0$ such that for any $\e\in(0,\e_0)$ and $x\in\Omega$ there is $\hat y\in \overline{B_\e(x)\cap\Omega}$ such that 
\begin{align}\label{def:changeOfXAlt}
F(\hat y,z)\leq F(y,z) \qquad\forall y\in \overline{B_\e(x)\cap\Omega},\hspace{0.5cm} z\in \R^{n\times m}.\tag{H4}
\end{align}
}
\eqref{def:changeOfXAlt} is very similar to Assumption 2.3. in \cite{Zhikov1995}. 
We remark that \eqref{def:changeOfXAlt} holds for many of the commonly considered examples, see Section \ref{sec:examples} as well as \cite{Esposito2004}, \cite{Esposito2019}. $W^{1,q}_{\tp{loc}}$ regularity is in general not known if $q=\frac{(n+\alpha)p}{n}$. An exception are functionals modeled on the double-phase functional \cite{Baroni2018}, see also \cite{DeFilippis2019}.

One of the few results on regularity up to the boundary in the setting of $(p,q)$-growth functionals is \cite{Bulicek2018} where Lipschitz regularity up to the boundary is obtained for minimisers of scalar autonomous functionals satisfying nonstandard growth conditions and the structure condition $F(x,z)=b(|z|)$. The growth conditions considered include $(p,q)$-growth. For functionals satisfying a structural assumption inspired by the double-phase functional Cald\'{e}ron-Zygmund estimates valid up to the boundary are obtained in \cite{Byun2017}. H\"older-regularity up to the boundary for double-phase functionals is studied in \cite{Tachikawa2020}.

If additional structure assumptions such as $F(x,z)=b(x,|z|)$ are imposed or if it is assumed that minimisers are bounded it is possible to improve on the results listed so far. Without going into further detail we refer to \cite{Breit2012}, \cite{Carozza2011} for results and further references in these directions. Local boundedness of minimisers for convex non-autonomous $(p,q)$-growth functionals under natural growth conditions and the additional assumption $F(x,2z)\lesssim 1+F(x,z)$ is studied in \cite{Hirsch2020}.

The main result of this paper is to obtain global $W^{1,q}$ regularity of minimisers for non-autonomous functionals $F(x,z)$- convex and with controlled $(p,q)$-growth conditions in $z$ as well as satisfying a uniform $\alpha$-H\"older condition in $x$- if $q<\frac{(n+\alpha)p}{n}$ and under the additional assumption \eqref{def:changeOfXAlt}. Thus our results are the global analogue of the local results in \cite{Esposito2004}, \cite{Esposito2019}. We stress that we do not make any structure assumption on the dependence on $z$ going beyond controlled $(p,q)$-growth conditions and that we make no geometric assumptions on either the domain or the boundary datum $g$.
To the best of our knowledge this is the first global $W^{1,q}$ regularity result valid for a large class of general convex $(p,q)$-growth functionals. 

Proofs of $W^{1,q}_{\tp{loc}}$-regularity in the $(p,q)$-growth framework often involve two major steps. First, an a-priori estimate for minimisers of a suitably regularised $q$-growth functional is obtained using difference quotient methods. Second, taking limits the estimate is transferred to minimisers of the original functional. In this paper we follow the same ideas. In order to apply the difference quotient method globally, we rely on an argument developed in \cite{Savare1998}. Using this technique we obtain a regularity result for relaxed minimisers:
\begin{theorem}\label{thm:regularityRelaxed}
Suppose $\Omega$ is a Lipschitz domain.
Let $g\in W^{1+\alpha,q}(\Omega)$.
Suppose $F(x,\cdot)$ satisfies \eqref{def:bounds1}-\eqref{def:bounds3} with $1< p\leq q< \frac{(n+\alpha)p}{n}$. If $u$ is a relaxed minimiser of $\F(\cdot)$ in the class $W^{1,p}_g(\Omega)$, then $u\in W^{1,q}(\Omega)$. Moreover for any $0\leq\beta < \alpha$ there is $\gamma>0$ such that
\begin{align*}
\|u\|_{W^{1,\frac{np}{n-\beta}}(\Omega)} \lesssim \left(1+\overline\F(u)+\|g\|_{W^{1+\alpha,q}(\Omega)}+\|f\|_{L^{q'}(\Omega)}\right)^\gamma.
\end{align*}
\end{theorem}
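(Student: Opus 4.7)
My approach follows the standard $(p,q)$-growth paradigm: regularise to a $q$-growth problem, derive a uniform fractional differentiability estimate via difference quotients up to the boundary, and then pass to the limit. The global nature of the estimate is where the technical novelty lies.

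\textbf{Step 1: Regularisation.} For $\e\in(0,1]$ I would define
\begin{align*}
\F_\e(v) = \int_\Omega F(x,Dv) - f\cdot v\d x + \e\int_\Omega (1+|Dv|^2)^{q/2}\d x
\end{align*}
on $W^{1,q}_g(\Omega)$. By strict convexity there is a unique minimiser $u_\e\in W^{1,q}_g(\Omega)$. Testing against a recovery sequence for $u$ in the definition of $\overline \F$ gives $\F_\e(u_\e)\le \overline\F(u)+o(1)$, which yields a uniform $W^{1,p}$-bound and hence $u_\e\rightharpoonup u$ in $W^{1,p}_g(\Omega)$ along a subsequence (identifying the limit as $u$ via the recovery sequence).

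\textbf{Step 2: A priori fractional estimate on $u_\e$.} After flattening $\partial\Omega$ by a Lipschitz chart, the problem reduces locally to a half-ball $B^+$. Testing the Euler--Lagrange equation of $u_\e$ with second difference quotients in tangential directions and using \eqref{def:bounds1} and \eqref{def:bounds3}, one obtains, for $s=1,\dots,n-1$,
\begin{align*}
\int |\tau_{h,s} V_p(Du_\e)|^2\d x \lesssim |h|^{2\alpha}\bigl(1+\F_\e(u_\e)+\|g\|_{W^{1+\alpha,q}}^q+\|f\|_{L^{q'}}^{q'}\bigr),
\end{align*}
where $V_p(z)=(1+|z|^2)^{(p-2)/4}z$. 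For the missing normal direction, the naive translate $\tau_{h,n}u_\e$ violates the boundary condition, so I would invoke Savar\'e's reflection-and-correction device \cite{Savare1998} to construct an admissible test function whose residual cost is $\|\tau_{h,n}g\|_{W^{1,q}}\lesssim |h|^\alpha\|g\|_{W^{1+\alpha,q}}$, exactly matching the $|h|^\alpha$ gain from the H\"older dependence on $x$. Covering $\overline\Omega$ by finitely many charts and summing, I obtain the Nikolski\u\i-type bound
\begin{align*}
\sup_{0<|h|<h_0}|h|^{-2\alpha}\int_{\Omega_h}|\tau_h V_p(Du_\e)|^2\d x \lesssim 1+\F_\e(u_\e)+\|g\|_{W^{1+\alpha,q}}^q+\|f\|_{L^{q'}}^{q'},
\end{align*}
with constants independent of $\e$.

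\textbf{Step 3: Embedding and passage to the limit.} A Nikolski\u\i-Sobolev embedding turns the above into $V_p(Du_\e)\in L^{2n/(n-\beta)}$ for every $\beta<\alpha$, hence
\begin{align*}
\|Du_\e\|_{L^{np/(n-\beta)}(\Omega)} \lesssim \bigl(1+\overline\F(u)+\|g\|_{W^{1+\alpha,q}(\Omega)}+\|f\|_{L^{q'}(\Omega)}\bigr)^{\gamma_0},
\end{align*}
uniformly in $\e$. Since $\frac{np}{n-\beta}\to\frac{np}{n-\alpha}>\frac{(n+\alpha)p}{n}$ as $\beta\uparrow\alpha$, the gap hypothesis $q<\frac{(n+\alpha)p}{n}$ lets me fix $\beta<\alpha$ with $\frac{np}{n-\beta}\ge q$. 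Weak compactness then yields $u_\e\rightharpoonup u$ in $W^{1,q}(\Omega)$ along a further subsequence, and lower semicontinuity of the $W^{1,np/(n-\beta)}$-norm transfers the bound to $u$; the corresponding bound on $u$ itself follows from a Poincar\'e-type inequality using $g$ as boundary reference.

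\textbf{Principal obstacle.} The delicate step is Step~2 near $\partial\Omega$: without any regularity of $\partial\Omega$ beyond Lipschitz, naive difference quotients in the normal direction are unavailable, and Savar\'e's construction is what makes a global estimate possible. The crucial point is that the corrector's cost is measured in terms of $\|g\|_{W^{1+\alpha,q}}$, whose fractional exponent matches \eqref{def:bounds3}, so that the two $|h|^\alpha$ contributions balance. Carefully tracking this balance, and absorbing the resulting $q$-growth terms back into the left-hand side using the gap $q<\frac{(n+\alpha)p}{n}$, is the technical heart of the proof.
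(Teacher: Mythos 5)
You have the right overall skeleton -- regularise to a strictly convex $q$-growth problem, derive a uniform fractional-differentiability estimate with the $|h|^\alpha$ gain coming from \eqref{def:bounds3} matching a $W^{1+\alpha,q}$ boundary datum, embed via Besov/Nikolski\u\i, absorb the $q$-growth terms by interpolation since $q<\frac{(n+\alpha)p}{n}$, and pass to the limit via the relation between $\F_\e$ and $\overline\F$. This matches the paper's strategy (Lemma \ref{lem:apriori} plus Lemma \ref{lem:convApproximate}). However, the technical heart -- your Step~2 -- contains a genuine gap.

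You propose to \emph{flatten} $\partial\Omega$ by a Lipschitz chart and then run tangential difference quotients plus a normal reflection/correction on the half-ball. For a Lipschitz domain this does not work: if $\Phi$ is the (merely bi-Lipschitz) flattening map, the transformed integrand $\tilde F(y,w)=F\bigl(\Phi(y),\,w\,(D\Phi(y))^{-\top}\bigr)\,J\Phi(y)$ involves $D\Phi$, which is only $L^\infty$ in $y$; consequently $\tilde F$ is no longer $\alpha$-H\"older in $y$, so the condition \eqref{def:bounds3} -- exactly what supplies the crucial $|h|^\alpha$ gain you rely on -- is destroyed in the flattened coordinates. This can only be salvaged for $C^{1,\alpha}$ boundaries, whereas the theorem is stated for Lipschitz. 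The paper avoids this entirely: there is no flattening. Instead, following Savar\'e, it exploits the uniform exterior cone condition \eqref{eq:uniformCone} and translates directly in the physical domain: around each $x_0$ one takes $T_h\tilde v=\phi\,\tilde v_h+(1-\phi)\tilde v$ with $\tilde v$ the extension of $v\in W^{1,q}_0(\Omega)$ by zero and $h$ ranging over the cone $C_{\rho_0}(\theta_0,\pmb n(x_0))$ -- for such $h$ the translated, cut-off function is still admissible. The energy difference $\F_\e(T_h\tilde v+g)-\F_\e(v+g)$ is estimated by $|h|^\alpha(1+\|Dv\|_q^q+\|g\|_{W^{1+\alpha,q}}^q+\|f\|_{q'}^{q'})$ using convexity and \eqref{def:bounds3} directly on $\Omega$, with no change of variables; combined with the strict-convexity excess estimate \eqref{eq:lowerEstimate}, this yields the $B^{1+\alpha/p,p}_\infty$-bound, and the $\|Dv_\e\|_q^q$ on the right is then absorbed by interpolation. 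Note also that the right-hand side of the a priori estimate must carry $\|Dv_\e\|_{L^q}^q$ (not $\F_\e(u_\e)$, which only dominates $\e\|Du_\e\|_q^q$); the absorption step you gesture at in your final paragraph is therefore not optional decoration but essential, and the exponent bookkeeping $\theta q<p$ has to be checked. In short: drop the flattening, translate along the exterior cone with a cutoff and zero extension, and compare energies of the translated competitor directly with the minimiser -- that is the argument that actually closes for Lipschitz $\Omega$.
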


A key obstruction to $W^{1,q}$-regularity in the $(p,q)$-growth setting is the Lavrentiev phenomenon, which describes the possibility that
\begin{align}\label{eq:Lphenomenon}
\inf_{u\in W^{1,p}_g(\Omega)} \F(u)< \inf_{u\in W^{1,q}_g(\Omega)} \F(u).
\end{align}
A first example of this phenomenon was given in \cite{Lavrentiev1926}. In the context of $(p,q)$- growth functionals the theory was further developed in \cite{Zhikov1987}, \cite{Zhikov1993},\cite{Zhikov1995}. The Lavrentiev phenomenon is closely related to properties of the relaxed functional. We adopt the viewpoint and terminology of \cite{Buttazo1992} and consider a topological space $X$ of weakly differentiable functions and a dense subspace $Y\subset X$. We introduce the following sequentially lower semi-continuous (slsc) envelopes
\begin{align*}
\overline \F_X = \sup\{\,\mathscr G\colon X\to[0,\infty]: \mathscr G \text{ slsc }, \mathscr G\leq \F \text{ on } X\,\}\\
\overline \F_{Y} = \sup\{\,\mathscr G\colon X\to[0,\infty]: \mathscr G \text{ slsc }, \mathscr G\leq \F \text{ on } Y\,\}\nonumber
\end{align*}
and define the Lavrentiev gap functional for $u\in X$ as
\begin{align*}
\mathscr L(u,X,Y)=\begin{cases}
		\overline \F_{Y}(u)-\overline \F_X(u)  &\text{ if } \overline \F_X(u)<\infty\\
		0 &\text{ else}.
	\end{cases}
\end{align*}
Note that the gap functional is non-negative. 

There is an extensive literature on the Lavrientiev phenomenon and gap functional, an overview of which can be found in \cite{Buttazo1995}, \cite{Foss2001} to which we also refer for further references. The phenomenon is also of interest in nonlinear elasticity \cite{Foss2003}. Considering the common choice $X=W^{1,p}(\Omega)$ endowed with the weak toplogy, $Y=W^{1,q}_{\tp{loc}}(\Omega)$ a question related to the Lavrentiev phenomenon is to study measure representations of $\overline \F(\cdot)$. We refer to \cite{Fonseca1997,Acerbi2003} for results and further references in this direction, but point out that in this context in \cite{Fonseca2005a} an argument using a Whitney cover of $\Omega$ was given that is similar to arguments in this paper.

In this paper we always consider the choice $X=W^{1,p}_g(\Omega)$ endowed with the weak topology and $Y=W^{1,q}_g(\Omega)$.
  Since $F(x,z)$ is convex, standard methods show that then $\overline\F_X(\cdot)=\F(\cdot)$, see \cite[Chapter 4]{Giusti2003}. Further $\overline \F_Y(\cdot)=\overline \F(\cdot)$. We also note that if ${\mathscr L(u,X,Y)=0}$ for all $u\in X$, then the Lavrentiev phenomenon cannot occur. Non-occurrence of the Lavrentiev phenomenon allows to transfer the estimates obtained in Theorem \ref{thm:regularityRelaxed} to pointwise minimisers and thus to establish $W^{1,q}$-regularity.
 In fact under the assumption that $\mathscr L(u,X,Y)=0$ for all $u\in X$ with $X=W^{1,p}(\Omega)$, endowed with the weak toplogy, and $Y=W^{1,p}(B_r)$ with $B_r\Subset\Omega$, $W^{1,q}(B_r)$-regularity of minimisers of non-autonomous functionals satisfying \eqref{def:bounds1}-\eqref{def:bounds3} with $1<p\leq q<\frac{(n+\alpha)p}{n}$ is obtained in  \cite{Esposito2004}. Under \eqref{def:bounds1}-\eqref{def:bounds3} and the same restriction on $q$, \cite{Esposito2019} shows that $\mathscr L(\cdot,W^{1,p}(\Omega),W^{1,q}(B_r))=0$ where $B_r\Subset\Omega$ if in addition \eqref{def:changeOfXAlt} holds. Our next theorem obtains global $W^{1,q}$-regularity of minimisers under the same assumptions.
\begin{theorem}\label{thm:nonautonReg}
Suppose $\Omega$ is a $C^{1,\alpha}$-domain.
Suppose $g\in W^{1+\max\left(\alpha,\frac 1 q\right),q}(\Omega)$. Assume $\F(\cdot)$ satisfies \eqref{def:bounds1}-\eqref{def:bounds3} with $1< p\leq q<\frac{(n+\alpha)p}{n}$ and \eqref{def:changeOfXAlt}.
Suppose $u$ is a pointwise minimiser of $\F(\cdot)$ in the class $W^{1,p}_g(\Omega)$. Then $u\in W^{1,q}(\Omega)$. Moreover for $0\leq \beta< \alpha$ there is $\gamma>0$ such that
\begin{align*}
\|u\|_{W^{1,\frac{np}{n-\beta}}(\Omega)}\lesssim \left(1+\F(u)+\|g\|_{W^{1+\max\left(\alpha,\frac 1 q\right),q}(\Omega)}+\|f\|_{L^{q'}(\Omega)}\right)^\gamma
\end{align*}
\end{theorem}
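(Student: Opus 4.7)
The strategy is to promote Theorem \ref{thm:regularityRelaxed} to pointwise minimisers by showing that any pointwise minimiser is automatically a $W^{1,q}$-relaxed minimiser. Since $F(x,\cdot)$ is convex we have $\overline\F_X(v)=\F(v)$ on $X=W^{1,p}_g(\Omega)$ and trivially $\overline\F=\overline\F_Y\geq\overline\F_X$. Hence, if we can build a sequence $(u_\varepsilon)\subset W^{1,q}_g(\Omega)$ with $u_\varepsilon\rightharpoonup u$ weakly in $W^{1,p}$ and $\limsup_\varepsilon \F(u_\varepsilon)\leq \F(u)$, then $\overline\F(u)\leq \F(u)\leq \F(v)\leq \overline\F(v)$ for every $v\in W^{1,p}_g(\Omega)$, so $u$ minimises $\overline\F$ and Theorem \ref{thm:regularityRelaxed} delivers the claimed estimate. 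The problem is thus reduced to showing that the global Lavrentiev gap $\mathscr L(u,X,Y)$ vanishes.

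The recovery sequence is constructed through a finite atlas of $\overline\Omega$ consisting of interior balls $B_{r_i}(x_i)\Subset\Omega$ and boundary balls centred on $\partial\Omega$ equipped with $C^{1,\alpha}$-flattening charts, together with a subordinate smooth partition of unity $\{\rho_i\}$. In the interior balls the local Lavrentiev non-occurrence result of \cite{Esposito2019}---available precisely under \eqref{def:bounds1}-\eqref{def:bounds3}, \eqref{def:changeOfXAlt} and $q<(n+\alpha)p/n$---supplies local modifications of $u$ that lie in $W^{1,q}$ with a controlled energy increment. In the boundary balls, after straightening $\partial\Omega$ by $\Phi_i$ (the $C^{1,\alpha}$ regularity of $\Phi_i$ preserves \eqref{def:bounds1}-\eqref{def:bounds3} and \eqref{def:changeOfXAlt} with adjusted constants), write $u=g+(u-g)$ and approximate $u-g$ by extending it by zero across the flattened piece of boundary and convolving with a mollifier at scale $\varepsilon$. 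The regularity $g\in W^{1+\max(\alpha,1/q),q}$ ensures both that this trace-preserving manipulation is legitimate and that $Dg\in L^q$. The $F$-energy of the mollified piece is controlled through \eqref{def:changeOfXAlt}: at each $x$, replacing $F(x,\cdot)$ by $F(\hat y,\cdot)$ at the minimising point $\hat y\in\overline{B_\varepsilon(x)\cap\Omega}$ costs only $\Lambda\varepsilon^\alpha(1+|Du|^q)$ by \eqref{def:bounds3}, and Jensen's inequality for the convex map $F(\hat y,\cdot)$ combined with the strict gap $q<(n+\alpha)p/n$ ensures that the total error vanishes as $\varepsilon\to 0$.

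The local approximations are patched together, either through the partition of unity $\sum_i\rho_i u_i^\varepsilon$ (with cross terms $|D\rho_i|\,|u_i^\varepsilon-u|$ controlled by the strong $W^{1,p}$-convergence $u_i^\varepsilon\to u$ that the construction delivers) or by successively modifying $u$ ball by ball; convexity of $F(x,\cdot)$ bounds the global $F$-energy by a convex combination of the local energies plus an error tending to $0$. The principal technical hurdle is the boundary construction: one must simultaneously preserve the trace $g$, raise the integrability of the gradient from $L^p$ to $L^q$, and control the spatial oscillation of $F$ against the $p$-$q$ mismatch. Hypothesis \eqref{def:changeOfXAlt} and the quantitative gap $q<(n+\alpha)p/n$ are exactly the ingredients that make this trade-off feasible, with the extra $W^{1/q,q}$-regularity of $g$ accounting for the fractional Sobolev slack near $\partial\Omega$; the remainder of the argument is technically lengthy but routine modulo these ingredients.
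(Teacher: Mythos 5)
Your proposal reduces the theorem to showing that the global Lavrentiev gap $\mathscr L(u,W^{1,p}_g(\Omega),W^{1,q}_g(\Omega))$ vanishes for every $u\in W^{1,p}_g(\Omega)$ with $\F(u)<\infty$. That reduction is logically correct (the chain $\overline\F(u)\leq\F(u)=\overline\F_X(u)\leq\overline\F_Y(u)=\overline\F(u)$ does give $\overline\F(u)=\F(u)$, and minimality then passes to the relaxed functional). But the paper does \emph{not} prove this vanishing in the range $q<\frac{(n+\alpha)p}{n}$, and apparently cannot: Proposition \ref{cor:lavrentiev} obtains $\mathscr L(\cdot,X,Y)=0$ with $Y=W^{1,q}_g(\Omega)$ only under the strictly stronger restriction $q<\min\bigl(p+1,(1+\tfrac{\alpha}{(\alpha+1)n})p\bigr)$; in the full range only the local version $Y=W^{1,q}_{\text{loc}}(\Omega)$ is established. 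If your recovery-sequence construction worked in the full range it would outright improve Proposition \ref{cor:lavrentiev}, which indicates a gap.

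Two concrete obstructions to your boundary construction. First, $C^{1,\alpha}$-flattening charts do not preserve \eqref{def:changeOfXAlt}. Under a chart $\Phi$ the integrand becomes $F^{\Phi}(y,z)=F(\Phi(y),zD\Phi(y)^{-1})$, and because the matrix factor $D\Phi(y)^{-1}$ varies with $y$, a single point $\hat y$ cannot minimise $F^{\Phi}(\cdot,z)$ simultaneously for all $z$ (already visible for the double-phase model $|z|^p+a(x)|z|^q$). So the key Jensen/\eqref{def:changeOfXAlt} step is not available inside the boundary chart. Second, extending $u-g$ by zero across the flattened boundary and convolving at a fixed scale $\e$ does not produce a function with trace $g$; one needs an additional inward translation before mollifying, and once you translate the $Dg$ part is evaluated at mismatched points, generating a commutator term whose control is again delicate and which you do not address.

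The paper's own route is genuinely different and avoids both obstacles. It constructs \emph{outward-stretching} diffeomorphisms $\Psi_s:\Omega^s\to\Omega$ with $D\Psi_s\to\mathrm{Id}$ as $s\nearrow1$ (Lemma \ref{lem:diffeos}), extends $u$ by $\hat g$ to $\Omega^s$, pulls back to $u^s=u\circ\Psi_s^{-1}$ on $\Omega$ (which is $W^{1,q}$ in a collar of $\partial\Omega$ because it equals $\hat g\circ\Psi_s^{-1}$ there), and then applies the Whitney–Besicovitch mollification of Lemma \ref{lem:ue} to $u^s$ and the perturbed integrand $F^s(x,z)=F(\Psi_s^{-1}(x),zD\Psi_s(\Psi_s^{-1}(x)))$. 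Since $D\Psi_s\to\mathrm{Id}$, the constants in \eqref{def:bounds1}–\eqref{def:changeOfXAlt} for $F^s$ degenerate harmlessly as $s\nearrow1$, unlike the fixed $C^{1,\alpha}$-chart $\Phi$ where $D\Phi$ is not close to identity. Crucially, the resulting diagonal sequence $u_k\in W^{1,q}_g(\Omega)$ satisfies $\int_\Omega F^{1/k}(x,Du_k)\d x\to\F(u)$ — not $\int_\Omega F(x,Du_k)\d x\to\F(u)$ — so no global Lavrentiev non-occurrence is claimed. The a-priori estimate (Lemma \ref{lem:apriori}) is then applied uniformly in $k$ to the regularised minimisers $v_k$ of $\F^{1/k}_{\e_k}$, their weak $W^{1,q}$-limit $v$ is identified with $u$ via strict convexity, and this yields $u\in W^{1,q}(\Omega)$. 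This indirection is exactly what buys the full range $q<\frac{(n+\alpha)p}{n}$ and is the idea that your proposal is missing.
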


The proof of this theorem employs regular approximations to $u\in W^{1,p}_g(\Omega)$ using a partition of unity adapted to a Whitney-Besicovitch covering of $\Omega$. 
This construction is motivated by the following considerations: For autonomous convex functionals it is easy to see that considering the mollified functions $u\star \phi_\e$ shows that $\mathscr L(\cdot,X,Y)=0$  on $X=W^{1,p}(\Omega)$ where $Y=W^{1,q}(\omega)$ for some fixed $\omega\Subset\Omega$.
Our situation, where we intend to employ a similar argument, differs in two aspects: we consider non-autonomous functionals and require results that are valid up to the boundary. We use assumption \eqref{def:changeOfXAlt} to deal with the $x$-dependence. In order to obtain results valid up to the boundary our main idea is to use a two-parameter mollification $u\star \phi_{\e \delta(x)}$ where $\delta(x)\sim d(x,\p\Omega)$. We will implement this using a Whitney-Besicovitch covering.

We are not able to prove that $\mathscr L(u,W^{1,p}_g(\Omega),W^{1,q}_g(\Omega))=0$ for all $u\in W^{1,p}_g(\Omega)$ in the full range of $q$ covered by Theorem \ref{thm:nonautonReg}, but obtain the following partial result:
\begin{proposition}\label{cor:lavrentiev}
Suppose $\Omega$ is a Lipschitz domain and $g\in W^{1+\frac 1 q,q}(\Omega)$.
Let $X=W^{1,p}_g(\Omega)$ endowed with the weak topology.
Suppose that ${1<p\leq q<\frac{(n+\alpha)p}{n}}$ and that $F(x,\cdot)$ satisfies \eqref{def:bounds1}-\eqref{def:bounds3} and \eqref{def:changeOfXAlt}.  Then with the choice $Y=W^{1,q}_{\tp{loc}}(\Omega)$,  
\[\mathscr L(\cdot,X,Y)=0\text{ on } X.\]

If in fact $1<p\leq q <min\big (p+1,\big(1+\frac \alpha {(\alpha+1)n}\big) p\big)$, then $\mathscr L(\cdot,X,Y)=0$ on $X$ with the choice $Y=W^{1,q}_g(\Omega)$.
\end{proposition}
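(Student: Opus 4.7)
My plan is as follows. Since $F(x,\cdot)$ is convex, standard lower semicontinuity results for convex integrals yield $\overline\F_X = \F$ on $X$. To prove $\mathscr L(\cdot,X,Y)=0$ it therefore suffices, for each $u \in X$, to exhibit a recovery sequence $(v_\e)\subset Y$ with $v_\e \rightharpoonup u$ weakly in $X$ and $\limsup_{\e\to 0}\F(v_\e) \leq \F(u)$, the reverse inequality $\overline\F_Y(u) \geq \overline\F_X(u)$ being automatic from density.

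For the first part, where $Y = W^{1,q}_{\tp{loc}}(\Omega)$, I will construct $v_\e$ by a two-parameter mollification adapted to a Whitney-Besicovitch covering $\{Q_i\}$ of $\Omega$ with radii $r_i \sim \dist(Q_i,\p\Omega)$ and a smooth subordinate partition of unity $\{\chi_i\}$ satisfying $|D\chi_i|\lesssim r_i^{-1}$. Denoting by $\phi_t$ the standard mollifier at scale $t$, I set
\begin{equation*}
v_\e(x) := \sum_i \chi_i(x)\,(u\star \phi_{\e r_i})(x).
\end{equation*}
The Whitney scale guarantees $B_{\e r_i}(x)\Subset\Omega$ for $x\in\tp{supp}\,\chi_i$ and small $\e$, so $v_\e$ is smooth on $\Omega$, hence locally in $W^{1,q}$. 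Classical mollification estimates combined with the finite overlap of $\{Q_i\}$ yield $v_\e\to u$ in $W^{1,p}(\Omega)$, so in particular $v_\e \rightharpoonup u$ weakly in $X$.

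The functional estimate is the crux. Exploiting $\sum_i D\chi_i\equiv 0$ I write
\begin{equation*}
Dv_\e(x) = \sum_i \chi_i(x)(\phi_{\e r_i}\star Du)(x) + \sum_i D\chi_i(x)\bigl[(\phi_{\e r_i}\star u)(x)-c_i\bigr],
\end{equation*}
where $c_i$ are local averages of $u$; the second sum is a lower order correction bounded via Poincar\'e. For each $x$ I invoke \eqref{def:changeOfXAlt} to pick $\hat y_x\in\overline{B_{C\e r(x)}(x)\cap\Omega}$ minimising $F(\cdot,z)$; then \eqref{def:bounds3} gives
\begin{equation*}
F(x,Dv_\e(x))\leq F(\hat y_x,Dv_\e(x)) + \Lambda(C\e r(x))^\alpha(1+|Dv_\e(x)|^2)^{q/2},
\end{equation*}
and convexity of $F(\hat y_x,\cdot)$ combined with Jensen bounds $F(\hat y_x,Dv_\e(x))$ by the convex combination $\sum_i \chi_i(x)(\phi_{\e r_i}\star F(\hat y_x,Du))(x)$ up to the correction above. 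This main term converges to $\F(u)$ by dominated convergence and \eqref{def:bounds3}, while the H\"older error is controlled using the pointwise estimate $|Dv_\e(x)|\lesssim (\e r(x))^{-n/p}\|Du\|_{L^p}$ obtained from Young's inequality and interpolating against the $L^p$ bound; integrability of $(\e r(x))^\alpha (1+|Dv_\e|^2)^{q/2}$ in the limit forces exactly $q<(n+\alpha)p/n$.

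For the second, stronger statement, $Y=W^{1,q}_g(\Omega)$ requires preserving the boundary trace $g$. Writing $u=g+w$ with $w\in W^{1,p}_0(\Omega)$ (extended by zero to $\R^n$), I run the same Whitney-Besicovitch construction on $w$; since $\tp{supp}(w\star\phi_{\e r_i})\subset\Omega$ whenever $B_{\e r_i}(x)\subset\Omega$, the resulting $w_\e$ lies in $W^{1,q}_0(\Omega)$ and $v_\e:=g+w_\e\in W^{1,q}_g(\Omega)$ using $g\in W^{1+1/q,q}(\Omega)\hookrightarrow W^{1,q}(\Omega)$. The estimates proceed as in the first part, but the H\"older error $\int_\Omega (\e r(x))^\alpha|Dv_\e|^q\,\dx$ must now be controlled uniformly up to $\p\Omega$. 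The main obstacle will be this delicate boundary computation: combining the Hardy inequality $\int_\Omega |w|^p d(x,\p\Omega)^{-p}\,\dx\lesssim\int_\Omega |Dw|^p\,\dx$ with the Whitney-scale pointwise bound for $|Dw_\e|$ and optimising the trade-off between the H\"older weight $r(x)^\alpha$ and the singular behaviour of $Dw_\e$ near $\p\Omega$ should yield the stricter range $q<\min(p+1,(1+\tfrac{\alpha}{(\alpha+1)n})p)$, with the explicit exponent $(1+\tfrac{\alpha}{(\alpha+1)n})p$ emerging precisely from that optimisation.
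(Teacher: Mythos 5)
Your overall strategy for the first part mirrors the paper's: use Lemma~\ref{lem:lavrientiev} to reduce to producing a recovery sequence, build it via a Whitney--Besicovitch covering with a subordinate partition of unity, and exploit \eqref{def:changeOfXAlt} together with \eqref{def:bounds3} and Jensen's inequality (the content of Lemma~\ref{lem:changeOfXAlt}) to control $F(x,Dv_\e)$. However, there is a genuine gap in your choice of mollification scale. You mollify at scale $\e r_i$ with $r_i\sim\dist(Q_i,\p\Omega)\sim|K_i|^{1/n}$. The paper instead uses scale $\e\delta_i$ with $\delta_i=|K_i|^{m/n}$ for a parameter $m>1$ chosen so that $m\big(\Theta-\tfrac{n(q-1)}{p}(1-\tfrac pq)\big)\geq 1$, and this extra power is essential. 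In the decomposition $Dv_\e=A_1+A_2$ with $A_2=\sum_i D\chi_i\otimes(u\star\phi_{\e r_i}-u)$, one has $\|u\star\phi_{\e r_i}-u\|_{L^q(K_i)}\lesssim(\e r_i)^\Theta\|u\|_{W^{1,p}}$ with $\Theta=1+n(1/q-1/p)<1$ and $|D\chi_i|\sim r_i^{-1}$, so the cube-$i$ contribution to $\|A_2\|_{L^q}$ scales like $\e^\Theta r_i^{\Theta-1}$, which blows up as $r_i\to 0$. Choosing the scale $\e r_i^m$ turns this into $\e^\Theta r_i^{m\Theta-1}$, which decays near $\p\Omega$ once $m>1/\Theta$, and the further strengthening of $m$ encodes the cross-term $\int|A_1|^{q-1}|A_2|$ that governs $\int F(x,Dv_\e)$; see the paper's Lemmas~\ref{lem:sumunity} and~\ref{lem:ue}. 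With $m=1$, the required inequality on $m$ forces $q\leq p$, so your construction as written does not reach the stated range $q<(n+\alpha)p/n$. The subtraction of constants $c_i$ does not fix this: since the $c_i$ vary with $i$, $\sum_i D\chi_i\otimes c_i\neq 0$, and the genuinely useful identity is $\sum_i D\chi_i\otimes(u_\e^i-u)$, whose size is precisely what the exponent $m$ must tame.

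For the second part your route is genuinely different from the paper's. The paper decomposes $\Omega$ into finitely many strongly star-shaped Lipschitz pieces (Lemma~\ref{lem:starshapedLipschitz}) and, on each piece, applies a homogeneous dilation $u^s(x)=s\,u(x/s)$ that pushes the trace data outward before mollifying (Lemma~\ref{lem:singleDomain}); the constraint $q<(1+\tfrac{\alpha}{(\alpha+1)n})p$ drops out of balancing $m>m'$ against $\alpha-m\tfrac np(q-p)\geq 0$. You instead split $u=g+w$ with $w\in W^{1,p}_0(\Omega)$, extend $w$ by zero, and try to control the boundary H\"older error with Hardy's inequality. This plan inherits the same scale defect as the first part, and more importantly the heuristic optimisation you sketch does not visibly reproduce the exponent $(1+\tfrac{\alpha}{(\alpha+1)n})p$; a back-of-the-envelope computation using the pointwise bound $|Dw_\e|\lesssim(\e r(x))^{-n/p}\|Dw\|_{L^p}$ and Whitney cube counting leads to a range of the form $q\lesssim\tfrac{(\alpha+1)p}{n}$, not the one claimed. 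I would not accept the assertion that the correct threshold "emerges precisely from that optimisation" without seeing the computation carried out; at present this is a stated hope rather than an argument. If you wish to avoid the dilation step, you would still need a substitute for the preservation of the boundary trace during mollification near $\p\Omega$, and the analysis must be done with the corrected mollification scale $\e|K_i|^{m/n}$.
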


We mention that as a by-product of our work we also obtain a higher differentiability result:
\begin{corollary}\label{cor:higherDiff}
Suppose the assumptions of Theorem \ref{thm:regularityRelaxed} (or Theorem \ref{thm:nonautonReg}) hold. Let $u$ be a relaxed (pointwise) minimiser of $\F(\cdot)$ in the class $W^{1,p}_g(\Omega)$. Then $u\in W^{1+\beta,p}(\Omega)$ for $\beta < \frac \alpha {\max(2,p)}$. Moreover there is $\gamma>0$ such that
\begin{align*}
\|u\|_{W^{1+\beta,p}(\Omega)}\lesssim \left(1+\F(u)+\|g\|_{W^{1+\max\left(\alpha,\frac 1 q\right),q}(\Omega)}+\|f\|_{L^{q'}(\Omega)}\right)^\gamma
\end{align*}
\end{corollary}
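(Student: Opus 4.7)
The plan is to read off the higher-differentiability statement directly from the a-priori fractional difference-quotient estimate that is produced along the way in the proof of Theorem \ref{thm:regularityRelaxed}. In the $(p,q)$-growth framework the standard argument establishes a bound of the form
\begin{align*}
\int_{\Omega}|\tau_h V_p(Du^\delta)|^2\dx \lesssim |h|^\alpha\cdot C,
\end{align*}
where $V_p(z):=(\mu^2+|z|^2)^{(p-2)/4}z$, the $u^\delta$ are minimisers of a regularised $q$-growth approximation of $\F(\cdot)$, and $C$ depends polynomially on the data in the manner stated. The higher integrability of Theorem \ref{thm:regularityRelaxed} is extracted by iterating a Gagliardo--Nirenberg-type embedding on this bound; the higher differentiability is simply the un-iterated form of the same estimate. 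For a pointwise minimiser $u$ I would first invoke Theorem \ref{thm:nonautonReg} to upgrade $u\in W^{1,q}_g(\Omega)$ and conclude that $u$ equals a relaxed minimiser, so it suffices to argue in the relaxed case.

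Next I would pass to the limit $\delta\to 0$ along the approximating sequence constructed in the proof of Theorem \ref{thm:regularityRelaxed}, applying Fatou's lemma to transfer the $|h|^\alpha$ bound from $V_p(Du^\delta)$ to $V_p(Du)$. This places $V_p(Du)$ in the Nikolskii space $N^{\alpha/2,2}(\Omega)$, with the quantitative estimate of the right polynomial form.

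The final step is to translate this information from $V_p(Du)$ to $Du$ via the standard algebraic inequalities for $V_p$. When $p\geq 2$, the bound $|z-w|^p\lesssim |V_p(z)-V_p(w)|^2$ (which follows from the two-sided equivalence $|V_p(z)-V_p(w)|^2\sim (\mu^2+|z|^2+|w|^2)^{(p-2)/2}|z-w|^2$ together with $|z|^2+|w|^2\geq \tfrac12|z-w|^2$) immediately yields $\int_\Omega|\tau_h Du|^p\dx\lesssim |h|^\alpha$, so $Du\in N^{\alpha/p,p}$. When $1<p<2$, the same equivalence run in reverse combined with Hölder's inequality with exponents $2/p$ and $2/(2-p)$ gives
\begin{align*}
\int_\Omega|\tau_h Du|^p\dx \lesssim \left(\int_\Omega|\tau_h V_p(Du)|^2\dx\right)^{\!p/2}\!\left(\int_\Omega(\mu^2+|Du|^2)^{p/2}\dx\right)^{\!(2-p)/2}\!\!\lesssim |h|^{\alpha p/2},
\end{align*}
so that $Du\in N^{\alpha/2,p}$; the last factor is finite since $Du\in L^p(\Omega)$ is already known. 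In both cases $Du\in N^{\alpha/\max(2,p),p}(\Omega)$, and the standard embedding $N^{s+\e,p}\hookrightarrow W^{s,p}$ produces the desired $u\in W^{1+\beta,p}(\Omega)$ for every $\beta<\alpha/\max(2,p)$, with the polynomial dependence of the constants preserved through the chain.

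The main obstacle is not analytical but organisational: I need to verify that the difference-quotient bound on $V_p(Du^\delta)$ established in the proof of Theorem \ref{thm:regularityRelaxed} is indeed global up to $\p\Omega$ (which is expected, as it is obtained via the Savar\'e-type technique used to prove that theorem) and that its constant depends polynomially on $\F(u)$, $\|g\|_{W^{1+\max(\alpha,1/q),q}(\Omega)}$ and $\|f\|_{L^{q'}(\Omega)}$ in the stated sense. The $1<p<2$ case additionally requires the single Hölder step displayed above, which costs a factor $p/2$ and is precisely what produces $\alpha/2$ in place of $\alpha/p$.
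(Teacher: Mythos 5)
Your proposal is correct and is essentially the paper's argument: both routes land on the difference-quotient bound $\|\tau_h V_{\mu,p}(Dw)\|_{L^2(\Omega)}^2\lesssim|h|^\alpha$ obtained via the Savar\'e-type technique, then transfer it to $Du$ using the $V_{\mu,p}$-equivalence of Lemma \ref{lem:Vfunctional} (directly for $p\geq 2$, via the H\"older step you display for $p<2$, producing $\alpha/\max(2,p)$ in either case) and embed $B^{1+\alpha/\max(2,p),p}_\infty(\Omega)\hookrightarrow W^{1+\beta,p}(\Omega)$. The only organisational difference is that the paper, once $u\in W^{1,q}(\Omega)$ is known and the Euler--Lagrange system holds (via Lemma \ref{lem:relaxedEuler} in the relaxed case), re-runs the proof of Lemma \ref{lem:apriori} on $u$ itself rather than passing the estimate for $v_\e$ to the limit by Fatou as you do; both are equally valid.
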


The structure of the paper is as follows: In Section \ref{sec:prelim} we collect some background results that we use to prove global higher integrability for minimisers of the relaxed functional in Section \ref{sec:relaxed} and for minimisers of the pointwise functional in Section \ref{sec:WB}. In Section \ref{sec:alternatives} we extend these results to ellipticity assumptions suited for anisotropic and $p(x)$-growth. In Section \ref{sec:examples} we give examples of functionals to which our results apply.

\section{Preliminaries}\label{sec:prelim}
\subsection{Notation}
In this section we introduce our notation. 
The set $\Omega$ will always denote a open, bounded domain in $\R^n$. Given a set $\omega\subset\R^n$, $\overline \omega$ will denote its closure. We write $B_r(x)$ for the usual open Euclidean balls in $\R^n$ and $S^{n-1}$ for the unit sphere in $\R^n$.
We denote the cone of height $\rho$, aperture $\theta$ and axis in direction $\pmb n$ by $C_\rho(\theta,\pmb n)$. That is
\begin{align*}
C_\rho(\theta,\pmb n)=\{\,h\in \R^n: |h|\leq \rho, h\cdot \pmb n\geq |h|\cos(\theta)\,\}.
\end{align*}
Here $|\cdot |$ denotes the Euclidean norm of a vector in $\R^n$ and likewise the Euclidean norm of a matrix $A\in \R^{n\times n}$. $\tp{Id}$ denotes the identity matrix in $\R^{n\times n}$.
Given an open set $\Omega$ we denote $\Omega_\lambda=\{\,x\in \Omega: d(x,\partial\Omega)>\lambda\,\}$ and $\lambda \Omega = \{\,\lambda x: x\in\Omega\,\}$. Here $d(x,\partial\Omega)=\inf_{y\in \p\Omega} |x-y|$ denotes the distance of $x$ from the boundary of $\Omega$.

If $p\in[1,\infty]$ denote by $p'=\frac{p}{p-1}$ its H\"older conjugate.
The symbols $a \approx b$ and $a\lesssim b$ mean that there is some constant $C>0$, depending only on $n,m,p,\Omega,\mu,\nu$ and $\Lambda$, and independent of $a$ and $b$ such that $C^{-1} a \leq b \leq C a$ and $a\leq C b$, respectively. 

Write $V_{\mu,t}(z)=(\mu^2+|z|^2)^\frac{t-2}{4}z$. We recall the useful well-known inequality:
\begin{lemma}\label{lem:Vfunctional}
For $s>-1, \mu\in[0,1]$, $z_1,z_2\in\R^N$, with $\mu+|z_1|+|z_2|>0$,
\begin{align*}
\int_0^1 (\mu^2+|z_1+\lambda(z_2-z_1)^2)^\frac s 2 \lambda\d \lambda\sim (\mu^2+|z_1|^2+|z_2|^2)^\frac s 2
\end{align*}
with the implicit constants only depending on $s$.
Further,
\begin{align*}
|V_{\mu,t}(z_1)-V_{\mu,t}(z_2)|\sim (\mu^2+|z_1|^2+|z_2|^2)^\frac{t-2}{2}|z_1-z_2|^2.
\end{align*}
\end{lemma}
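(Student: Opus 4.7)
The plan is to prove both parts of the lemma by direct calculus arguments for the auxiliary function $V_{\mu,t}$. The first inequality will be handled by analysing the convex quadratic
\[\Phi(\lambda):=\mu^2+|(1-\lambda)z_1+\lambda z_2|^2\]
appearing in the integrand, and the second will be reduced to the first via the fundamental theorem of calculus applied to $V_{\mu,t}$.

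For the first equivalence I would start from two elementary bounds on $\Phi$. The triangle inequality gives $\Phi(\lambda)\le 2(\mu^2+|z_1|^2+|z_2|^2)$ uniformly on $[0,1]$. Assuming without loss of generality that $|z_2|\ge|z_1|$, the reverse triangle inequality yields $|(1-\lambda)z_1+\lambda z_2|\ge \lambda|z_2|-(1-\lambda)|z_1|\ge |z_2|/2$ on $[\tfrac{3}{4},1]$, so that $\Phi(\lambda)\gtrsim \mu^2+|z_1|^2+|z_2|^2$ on this subinterval. When $s\ge 0$ both directions of the equivalence are then immediate: the uniform upper bound yields the upper bound for the integral, and the pointwise lower bound on $[\tfrac{3}{4},1]$ yields the lower bound. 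For $-1<s<0$ the uniform upper bound on $\Phi$ produces a uniform lower bound for $\Phi^{s/2}$, which integrates to give the lower bound for the integral.

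The delicate point is the upper bound on the integral in the range $-1<s<0$, where the integrand may blow up near the minimiser $\lambda_{\ast}$ of $\Phi$. To deal with this I would expand
\[\Phi(\lambda)=|z_2-z_1|^2(\lambda-\lambda_{\ast})^2+\Phi_{\min},\qquad \Phi_{\min}\le \mu^2+|z_1|^2+|z_2|^2,\]
change variable to $u=|z_2-z_1|(\lambda-\lambda_{\ast})$, and apply the one-dimensional identity $\int_{\R}(A+u^2)^{s/2}\,{\rm d}u\sim A^{(s+1)/2}$, which is finite precisely when $s>-1$. Comparing $\Phi_{\min}^{(s+1)/2}/|z_2-z_1|$ with $(\mu^2+|z_1|^2+|z_2|^2)^{s/2}$ then splits naturally into the two regimes $|z_2-z_1|^2\gtrsim\mu^2+|z_1|^2+|z_2|^2$ (in which the comparison is immediate) and its opposite, where $|z_1|$ and $|z_2|$ are necessarily comparable and consequently $\Phi$ is comparable to $\mu^2+|z_1|^2+|z_2|^2$ throughout $[0,1]$.

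For the second inequality I would use the fundamental theorem of calculus to write
\[V_{\mu,t}(z_2)-V_{\mu,t}(z_1)=\int_0^1 DV_{\mu,t}(\gamma(\lambda))(z_2-z_1)\,{\rm d}\lambda,\qquad \gamma(\lambda)=z_1+\lambda(z_2-z_1),\]
with
\[DV_{\mu,t}(z)=(\mu^2+|z|^2)^{(t-2)/4}\Bigl[I+\tfrac{t-2}{2}\,\tfrac{z\otimes z}{\mu^2+|z|^2}\Bigr],\]
whose operator norm is comparable to $(\mu^2+|z|^2)^{(t-2)/4}$ with constants depending only on $t$. Cauchy--Schwarz combined with the unweighted analogue of the first equivalence (proved by the same argument) with $s=t-2$ then yields the upper bound, while the matching lower bound follows by case analysis depending on whether $|z_1-z_2|^2$ is small or large compared to $\mu^2+|z_1|^2+|z_2|^2$: in the first case $|\gamma(\lambda)|$ stays comparable to both $|z_1|$ and $|z_2|$ along the segment, so the directional derivative of $V_{\mu,t}$ retains the correct order and integration closes the estimate; in the second case $|V_{\mu,t}(z_1)-V_{\mu,t}(z_2)|$ is already controlled from below by $|V_{\mu,t}(z_i)|$ for the larger index, which has the required magnitude. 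I expect the main obstacle throughout to be the sharp upper bound in the range $-1<s<0$ of the first inequality, since both the necessity of the hypothesis $s>-1$ and the case analysis comparing $\Phi_{\min}^{(s+1)/2}/|z_2-z_1|$ with the right-hand side concentrate there.
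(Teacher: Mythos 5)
The paper does not actually prove this lemma; it simply cites it as a well-known inequality (it is essentially the classical Acerbi--Fusco--type estimate). So your attempt can only be judged on its own terms, and there is a concrete error in it. You invoke the ``one-dimensional identity'' $\int_{\R}(A+u^2)^{s/2}\,\mathrm{d}u\sim A^{(s+1)/2}$ and assert that it is finite precisely when $s>-1$. The direction is reversed: since $(A+u^2)^{s/2}\sim|u|^{s}$ as $|u|\to\infty$, the integral over all of $\R$ converges if and only if $s<-1$. In the range $-1<s<0$ that you are treating, the full-line integral diverges at $\pm\infty$, so the upper bound you extract from it is vacuous. The hypothesis $s>-1$ in the lemma is needed to control the local behaviour near the (possible) zero of the quadratic when $\mu=0$, not the behaviour at infinity; and the reason the statement still holds is that the $u$-integration is confined to an interval of length $|z_2-z_1|$ determined by the change of variables, not to all of $\R$. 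Extending to $\R$ throws away exactly the information that rescues the estimate.

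The fix is to keep the integral on the bounded interval $\{u=|z_2-z_1|(\lambda-\lambda_*):\ \lambda\in[0,1]\}$ and to use the structural fact that $\Phi(0)+\Phi(1)=2\mu^2+|z_1|^2+|z_2|^2$, so that at least one endpoint value of $\Phi$ is comparable to the right-hand side. From $\Phi(\lambda)=|z_2-z_1|^2(\lambda-\lambda_*)^2+\Phi_{\min}$ one then bounds $\Phi^{s/2}\le\bigl(|z_2-z_1|\,|\lambda-\lambda_*|\bigr)^{s}$ and integrates over $[0,1]$ (here $s>-1$ makes $|\lambda-\lambda_*|^s$ integrable across the vertex), and separately treats the case where $\Phi_{\min}$ itself is already of order $\mu^2+|z_1|^2+|z_2|^2$. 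Your framework for the second equivalence (fundamental theorem of calculus for $V_{\mu,t}$, bounding $DV_{\mu,t}$) is reasonable and the upper bound goes through once the first part is repaired; but note that for the lower bound the symmetric positive-definiteness of $DV_{\mu,t}$ for $t>0$ gives a cleaner argument than your case split, via $\langle V(z_2)-V(z_1),z_2-z_1\rangle\gtrsim|z_2-z_1|^2\int_0^1(\mu^2+|\gamma|^2)^{(t-2)/4}\,\mathrm{d}\lambda$, avoiding any separate treatment of the regime where $|z_1-z_2|$ is large. Finally, you correctly noticed (implicitly, via your Cauchy--Schwarz step) that the exponent in the paper's second display is a typo: it should read $|V_{\mu,t}(z_1)-V_{\mu,t}(z_2)|^2$ on the left, which is what your argument targets.
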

We will often find it useful to write for a function $v$ defined on $\R^n$ and a vector $h\in \R^n$, $v_h(x)=v(x+h)$.

We pick a family $\{\,\phi_\e\,\}$ of radially symmetric, non-negative mollifiers of unitary mass. We denote convolution with $\phi_\e$ as
\begin{align*}
u\star\phi_\e(x)=\int_{\R^n} u(y)\phi_\e(x-y)\d y.
\end{align*}

\subsection{Function spaces}
\label{sec:besov}
We recall some basic properties of Sobolev and Besov spaces following the exposition in \cite{Savare1998}, alternatively the theory can be found in \cite{Triebel1978}.

For $0\leq \alpha\leq 1$ and $k\in \N$, $C^k(\Omega)$ and $C^{k,\alpha}(\Omega)$ denote the spaces of functions $k$-times continuously differentiable in $\Omega$ and $k$-times $\alpha$-H\"older differentiable in $\Omega$, respectively.

For $1\leq p\leq\infty, k\in \N$, $L^p(\Omega)=L^p(\Omega,\R^m)$ and $W^{k,p}(\Omega)=W^{k,p}(\Omega,\R^m)$ denote the usual Lebesgue and Sobolev spaces respectively. We write $W^{k,p}_0(\Omega)$ for the closure of $C_0^\infty(\Omega)$-functions with respect to the $W^{k,p}$-norm. For $g\in W^{1,1}(\Omega)$, write $W^{k,p}_g(\Omega)= g + W^{k,p}_0(\Omega)$. We freely identify $W^{k,p}$-functions with their precise representatives. 

 Denote by $[,]_{s,q}$ the real interpolation functor. Let $s\in(0,1)$ and $p,q\in [1,\infty]$. We define
\begin{gather*}
B^{s,p}_q(\Omega)=B^{s,p}_q(\Omega,\R^m)=[W^{1,p}(\Omega,\R^m),L^p(\Omega,\R^m)]_{s,q}\\
B^{1+s,p}_q(\Omega)=[W^{2,p}(\Omega),W^{1,p}(\Omega)]_{s,q}=\{\,v\in W^{1,p}(\Omega): Dv\in B^{s,p}_q(\Omega)\,\}
\end{gather*}
Further we recall that $W^{1+s,p}(\Omega)=B^{1+s,p}_p(\Omega)$ and that for $1\leq q<\infty$, $B^{s,p}_q(\Omega)$ embeds continuously in $B^{s,p}_\infty(\Omega)$.
We will use a characterisation of these spaces in terms of difference quotients.  Let $D$ be a set generating $\R^n$, star-shaped with respect to $0$. For $s\in(0,1)$, $p\in[1,\infty]$, consider
\begin{align*}
[v]_{s,p,\Omega}^p = \sup_{h\in D\setminus\{\,0\,\}}\int_{\Omega_h}\left|\frac{v_h(x)-v(x)}{h}\right|^p\d x.
\end{align*}
This characterises $B^{s,p}_\infty(\Omega)$ in the sense that
\begin{align*}
v\in B^{s,p}_\infty(\Omega)\Leftrightarrow v\in L^p(\Omega) \text{ and } [v]_{s,p,\Omega}^p<\infty.
\end{align*}
Moreover there are positive constants $C_1,C_2>0$ depending only on $s,p,D,\Omega$ such that
\begin{align}\label{eq:besovcharacterisation}
C_1 \|v\|_{B^{s,p}_\infty(\Omega)}\leq \|v\|_{L^p(\Omega)}+[v]_{s,p,\Omega}\leq C_2\|v\|_{B^{s,p}_\infty(\Omega)}.
\end{align}
If $\Omega=B_r(x_0)$, then $C_1, C_2$ are unchanged by replacing $D$ with $QD$, where $Q$ is an orthonormal matrix. In particular, when $D=C_\rho(\theta,\pmb n)$ is a cone, they are independent of the choice of $\pmb n$. 

Finally, recall that $B^{s,p}_q(\Omega)$ may be localised: If $\{\,U_i\,\}_{i\leq M}$ is a finite collection of balls covering $\Omega$, then $v\in B^{s,p}_q(\Omega)$ if and only if $v_{|\Omega \cap U_i}\in B^{s,p}_q(\Omega \cap U_i)$ for $i=1,...,M$. Moreover, there are constants $C_3,C_4$ depending only on $M$ such that
\begin{align}\label{eq:besovlocalisation}
C_3 \|v\|_{B^{s,p}_q(\Omega)}\leq \sum_{i=1}^M \|v\|_{B^{s,p}_q(\Omega \cap U_i)}\leq C_4 \|v\|_{B^{s,p}_q(\Omega)}.
\end{align}

We recall a well-known embedding theorem, see e.g. \cite{Triebel2002}:
\begin{theorem}\label{thm:embedding}
Let $0< s\leq 1$ and $p,p_1\in [1,\infty]$.
Assume that $s-\frac n p =-\frac n {p_1}$ and suppose $v\in B^{s,p}_\infty(\Omega)$. Then for any $\e\in(0,1-p_1]$,
\begin{align*}
\|v\|_{L^{p_1-\e}(\Omega)}\lesssim\|v\|_{B^{s,p}_\infty(\Omega)}.
\end{align*}
\end{theorem}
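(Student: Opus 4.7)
The plan is to obtain the embedding by real interpolation from the classical Sobolev embedding, followed by a passage from a weak Lorentz bound to an $L^{p_1-\e}$ bound using the boundedness of $\Omega$. The case $s=1$ is the classical Sobolev embedding itself, and the case $p\geq n$ can be reduced to the subcritical regime by replacing $p$ with a slightly smaller exponent before interpolating (using $|\Omega|<\infty$), so I concentrate on $s\in(0,1)$ with $p<n$.

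First I would record the two endpoint embeddings $W^{1,p}(\Omega)\hookrightarrow L^{p^*}(\Omega)$ with $1/p^*=1/p-1/n$ and the trivial $L^p(\Omega)\hookrightarrow L^p(\Omega)$; both hold on our bounded Lipschitz domain. Applying the real interpolation theorem to the bounded inclusion of Banach couples $(W^{1,p}(\Omega),L^p(\Omega))\hookrightarrow(L^{p^*}(\Omega),L^p(\Omega))$ then yields
\begin{align*}
B^{s,p}_\infty(\Omega)=[W^{1,p}(\Omega),L^p(\Omega)]_{s,\infty}\hookrightarrow[L^{p^*}(\Omega),L^p(\Omega)]_{s,\infty}=L^{p_1,\infty}(\Omega),
\end{align*}
where the last identification is the standard description of real interpolation of Lebesgue spaces as Lorentz spaces, and $p_1$ is determined by $1/p_1=(1-s)/p+s/p^*=1/p-s/n$, matching the hypothesis $s-n/p=-n/p_1$. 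To conclude, I would use the elementary embedding $L^{p_1,\infty}(\Omega)\hookrightarrow L^{p_1-\e}(\Omega)$ valid for every small $\e>0$, a direct consequence of H\"older's inequality applied to the layer-cake representation, with constant depending only on $|\Omega|$; chaining the two inclusions produces the asserted estimate.

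The main technical obstacle is justifying the interpolation step on the bounded domain $\Omega$ rather than on $\R^n$: both the identification of $B^{s,p}_\infty(\Omega)$ as a real interpolation space and the Lorentz description of interpolated $L^p$ spaces are cleanest on $\R^n$, and one transfers them to $\Omega$ via a bounded extension operator, available under the standing Lipschitz hypothesis on $\p\Omega$. A more self-contained alternative, which sidesteps real-interpolation machinery altogether, is to use the intrinsic difference-quotient characterisation \eqref{eq:besovcharacterisation} and run a dyadic chaining argument on finite differences to obtain a pointwise control of $v$ in terms of its Besov seminorm and $L^p$-norm; integrating this estimate and tracking the loss yields exactly the $L^{p_1-\e}$ bound of the statement, as in Triebel's monograph.
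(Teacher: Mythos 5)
Your argument is correct. The paper itself offers no proof of Theorem \ref{thm:embedding}; it is stated as a well-known embedding and attributed to Triebel, so there is no internal proof to compare against. Your route is the standard one and the one implicit in the cited reference: use the endpoint Sobolev embedding $W^{1,p}(\Omega)\hookrightarrow L^{p^*}(\Omega)$, interpolate the identity map between the couples $(W^{1,p},L^p)$ and $(L^{p^*},L^p)$ to obtain $B^{s,p}_\infty(\Omega)\hookrightarrow L^{p_1,\infty}(\Omega)$ (your index computation $1/p_1=(1-s)/p+s/p^*=1/p-s/n$ is consistent with the paper's convention in which $[X_0,X_1]_{\theta,q}$ approaches $X_0$ as $\theta\nearrow 1$, as seen from $W^{1+s,p}=[W^{2,p},W^{1,p}]_{s,p}$), and then use that on a domain of finite measure the weak space $L^{p_1,\infty}(\Omega)$ embeds into $L^{p_1-\e}(\Omega)$. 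The two technical points you flag, namely transferring interpolation identities from $\R^n$ to $\Omega$ via a bounded extension operator for Lipschitz domains, and handling $s=1$ and $p\ge n$ separately, are precisely the details that make the argument rigorous in the present setting. One minor remark, unrelated to your proof: the constraint $\e\in(0,1-p_1]$ in the statement is surely a slip for $\e\in(0,p_1-1]$ (for $p_1>1$ the written interval is empty); your argument in fact gives the bound for every $\e>0$ with $p_1-\e\ge 1$, with a constant degenerating as $\e\to 0$, which is the expected behaviour since $L^{p_1,\infty}\not\hookrightarrow L^{p_1}$.
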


We next recall some basic properties of mollifications. Proofs can be found in \cite{evans}.
\begin{lemma}\label{lem:mollifier}
Let $1<p<\infty$. Let $\Omega$ be an open domain in $\R^n$ and $\omega\Subset\Omega$. Suppose $u\in W^{1,p}(\Omega)$ and consider $u_\e = u\star \phi_\e$ for $\e<d(\omega,\p\Omega)$. Then
\begin{enumerate}
\item $u_\e\in C^\infty(\omega)$
\item $\|u_\e\|_{W^{1,p}(\omega)} \leq \|u\|_{W^{1,p}(\Omega)}$
\item For $p\leq q\leq \frac{np}{n-p}$, $\|u_\e-u\|_{L^p(\omega)}\lesssim \e^\Theta \|u\|_{W^{1,p}(\Omega)}$ where $\Theta = 1+n(1/q-1/p)$ if $p<n$, $\Theta = n/q$ if $p\geq n$.
\item $\|Du_\e\|_{L^\infty(\omega)}\lesssim \e^{-\frac n p}\|Du\|_{W^{1,p}(\Omega)}.$
\end{enumerate}
\end{lemma}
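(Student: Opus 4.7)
All four assertions are classical; I indicate the standard strategy.

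Parts (i) and (ii) are routine. For (i), differentiate under the integral: for $x\in\omega$ and any multi-index $a$, the translate $\phi_\e(x-\cdot)$ is supported in $\Omega$ (since $\e < d(\omega,\p\Omega)$), so $\p^a u_\e(x) = (u\star\p^a\phi_\e)(x)$ is well-defined and continuous in $x$ by dominated convergence; since this holds for every $a$, $u_\e\in C^\infty(\omega)$. For (ii), Young's convolution inequality together with $\|\phi_\e\|_{L^1}=1$ yields $\|u_\e\|_{L^p(\omega)}\leq\|u\|_{L^p(\Omega)}$, and since convolution commutes with distributional differentiation on $\omega$, namely $D(u\star\phi_\e)=(Du)\star\phi_\e$ there, the same argument applied to $Du$ gives the gradient bound.

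For (iii), the starting point is the pointwise identity $u(x-y)-u(x) = -\int_0^1 y\cdot Du(x-ty)\d t$ valid a.e., which after integrating against $\phi_\e$, using $\supp\phi_\e\subset B_\e(0)$, and applying Fubini yields the classical $L^p$ rate
\begin{align*}
\|u_\e - u\|_{L^p(\omega)} \leq \e\,\|Du\|_{L^p(\Omega)}.
\end{align*}
In parallel, when $p<n$, Sobolev embedding plus (ii) bounds $u_\e - u$ uniformly in $L^{np/(n-p)}(\omega)$ by $\|u\|_{W^{1,p}(\Omega)}$, while for $p\geq n$ a Morrey-type embedding gives an analogous uniform bound in $L^r(\omega)$ for every finite $r$ (respectively in $L^\infty$ if $p>n$). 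H\"older interpolation between these two estimates at an exponent $q\in[p,np/(n-p)]$ then produces the claimed rate: solving $1/q = (1-\theta)/p + \theta(n-p)/(np)$ gives $\theta = n(1/p-1/q)$, and the $\e$-rate enters to the power $1-\theta = 1+n(1/q-1/p)=\Theta$. The case $p\ge n$ is treated in the same interpolation scheme, recovering $\Theta=n/q$.

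For (iv), write $Du_\e = (Du)\star\phi_\e$ and apply H\"older's inequality pointwise: $|Du_\e(x)| \leq \|\phi_\e\|_{L^{p'}(\R^n)}\|Du\|_{L^p(\Omega)}$. Since $\|\phi_\e\|_{L^1}=1$ and $\supp\phi_\e\subset B_\e(0)$, scaling yields $\|\phi_\e\|_{L^{p'}}\sim\e^{-n/p}$, giving the stated bound (with the weaker $\|Du\|_{L^p}$ on the right-hand side; the lemma's $\|Du\|_{W^{1,p}}$ is a fortiori admissible).

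The only nontrivial piece is the interpolation in (iii), where the balance between the first-order scaling rate $\e$ and the Sobolev gain must be tracked carefully; everything else reduces to Young's inequality, dominated convergence, and the fundamental theorem of calculus.
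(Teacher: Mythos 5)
The paper proves nothing here; it simply cites Evans, so you are supplying a self-contained derivation rather than paralleling an argument.

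Parts (i), (ii) and (iv) are correct and standard: differentiation under the integral sign, Young's convolution inequality with $\|\phi_\e\|_{L^1}=1$, and the pointwise H\"older bound $|Du_\e(x)|\le \|\phi_\e\|_{L^{p'}}\|Du\|_{L^p}$ together with the scaling $\|\phi_\e\|_{L^{p'}}\sim\e^{-n/p}$ give exactly what is needed.

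Part (iii) --- which you correctly interpret as an $L^q$-estimate, the $L^p$ in the displayed statement being a typo, as the later use in \eqref{eq:uebounds} confirms --- has a gap at the borderline $p=n$. For $p<n$ your H\"older interpolation of $\|u_\e-u\|_{L^p}\le\e\|Du\|_{L^p}$ against the $\e$-uniform Sobolev endpoint $\|u_\e-u\|_{L^{np/(n-p)}}\lesssim\|u\|_{W^{1,p}}$ gives $\e^{1-\theta}$ with $1-\theta=\Theta$, and for $p>n$ the same scheme with $L^\infty$ at the top yields $\e^{p/q}$, which is stronger than the claimed $\e^{n/q}$, so both are fine. But for $p=n$ the top endpoint is only $L^r$ for finite $r$, and the H\"older interpolation $\|v\|_{L^q}\le\|v\|_{L^n}^{1-\theta}\|v\|_{L^r}^{\theta}$ produces $1-\theta$ strictly less than $n/q$ for every finite $r$; the exponent $n/q$ is only attained in the limit $r\to\infty$, at which the embedding constant of $W^{1,n}\hookrightarrow L^r$ diverges. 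So ``the same interpolation scheme'' does not in fact recover $\Theta=n/q$ when $p=n$, and the precise value of $\Theta$ matters downstream: in Lemma \ref{lem:sumunity} the positivity of $\tau=\Theta-\tfrac{n(q-1)}{p}\bigl(1-\tfrac pq\bigr)$ becomes tight as $q$ approaches the upper end of the admissible range.

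A clean fix removes the case distinction and hits the exact exponent in all cases: apply the Gagliardo--Nirenberg inequality
\begin{align*}
\|v\|_{L^q(\omega)}\lesssim \|v\|_{L^p(\omega)}^{1-\theta}\,\|v\|_{W^{1,p}(\omega)}^{\theta},
\qquad \frac 1 q=\frac 1 p-\frac{\theta}{n},\quad \theta\in[0,1),
\end{align*}
to $v=u_\e-u$. With $\|u_\e-u\|_{L^p(\omega)}\le\e\|Du\|_{L^p(\Omega)}$ and $\|u_\e-u\|_{W^{1,p}(\omega)}\le 2\|u\|_{W^{1,p}(\Omega)}$ this gives $\|u_\e-u\|_{L^q(\omega)}\lesssim\e^{1-\theta}\|u\|_{W^{1,p}(\Omega)}$; here $1-\theta=1+n\bigl(\tfrac 1 q-\tfrac 1 p\bigr)=\Theta$ for $p<n$, and $1-\theta=n/q=\Theta$ for $p=n$, with $1-\theta>n/q$ for $p>n$. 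The condition $\theta<1$ (i.e.\ $q<\infty$ when $p=n$) is exactly what the Gagliardo--Nirenberg endpoint restriction requires, so there is no hidden loss.
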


We also recall the trace theorem in the following form, see e.g. \cite{Evans1992}.
\begin{lemma}\label{lem:traceTheorem}
Let $\Omega$ be a Lipschitz domain. Let $1<p<\infty$. Then there is a bounded linear operator $\tp{Tr}\colon W^{1+\frac 1 p,p}\Omega\to W^{1,p}(\p\Omega)$. In fact, it is possible to define $\tp{Tr} u$ to be the values of the precise representative of $u$ on $\p\Omega$.
\end{lemma}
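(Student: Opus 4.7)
The overall strategy is to reduce to the flat half-space case and then prove the estimate directly there. Since $\Omega$ is Lipschitz, choose a finite cover $\{U_i\}_{i=1}^N$ of $\partial\Omega$ with bi-Lipschitz maps $\Phi_i$ straightening $U_i\cap\partial\Omega$ onto a ball in $\{x_n=0\}$, together with one open set $U_0\Subset\Omega$ covering the interior, and a subordinate partition of unity $\{\eta_i\}$. For the interior piece $\eta_0 u$ there is nothing to prove. Writing $u=\sum_{i\geq 1}\eta_i u$ and pulling back via $\Phi_i$ reduces matters to a function in $W^{1+1/p,p}(\R^n_+)$ whose trace on $\{x_n=0\}$ must be shown to lie in $W^{1,p}(\R^{n-1})$. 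That this reduction preserves the relevant norms follows from the difference-quotient characterisation \eqref{eq:besovcharacterisation} applied to $Du$, which is stable under bi-Lipschitz changes of coordinates, and from the localisation property \eqref{eq:besovlocalisation}.

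On the half-space I would work first with $u\in C^\infty(\overline{\R^n_+})\cap W^{1+1/p,p}(\R^n_+)$ and define $\tp{Tr}\,u(x')=u(x',0)$. The bound $\|u(\cdot,0)\|_{L^p(\R^{n-1})}\lesssim \|u\|_{W^{1,p}(\R^n_+)}$ is the classical trace inequality. For the tangential gradient, note that $\nabla_{x'}\tp{Tr}\,u=\tp{Tr}\,\nabla_{x'}u$, so the task reduces to proving $\|w(\cdot,0)\|_{L^p(\R^{n-1})}\lesssim \|w\|_{W^{1,p}(\R^n_+)}+[w]_{B^{1/p,p}_p(\R^n_+)}$ for $w=\partial_{x_i}u$, $i<n$. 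Using a cut-off $\psi\in C_c^\infty([0,\infty))$ with $\psi(0)=1$, write
\begin{align*}
w(x',0)=-\int_0^\infty \partial_{x_n}\bigl(\psi(x_n)w(x',x_n)\bigr)\,dx_n
\end{align*}
and split into two terms. The first, involving $\psi'(x_n)w(x',x_n)$, is easily controlled by $\|w\|_{L^p(\R^n_+)}$. The second, involving $\psi(x_n)\partial_{x_n}w(x',x_n)$, is handled by pairing against test functions in $L^{p'}(\R^{n-1})$ and exploiting the fractional Sobolev regularity of $w$ along the normal direction, reducing to an inequality of Hardy type for the Gagliardo seminorm. Density of $C^\infty(\overline{\R^n_+})$ in $W^{1+1/p,p}(\R^n_+)$ then extends the estimate, and patching back via the partition of unity produces the bounded trace operator $\tp{Tr}:W^{1+1/p,p}(\Omega)\to W^{1,p}(\partial\Omega)$.

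For the second claim, identifying $\tp{Tr}\,u$ with the precise representative, I would argue as follows. Since $W^{1+1/p,p}(\Omega)\hookrightarrow W^{1,p}(\Omega)$, the precise representative $u^*$ of $u$ is defined $\mathcal H^{n-1}$-a.e.\ on $\partial\Omega$ by standard capacity arguments. Mollifications $u\star\phi_\varepsilon$ converge to $u^*$ pointwise $\mathcal H^{n-1}$-a.e.\ on $\partial\Omega$ and, by the continuity of $\tp{Tr}$ proven above, in $W^{1,p}(\partial\Omega)$. Matching the two limits along a subsequence identifies $\tp{Tr}\,u=u^*$ $\mathcal H^{n-1}$-a.e.\ on $\partial\Omega$.

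The main obstacle is the borderline nature of the statement: individual first derivatives of $u$ only lie in $W^{1/p,p}(\R^n_+)$, and this space does not admit an $L^p$ trace in general. One must therefore use the full joint regularity of $u$ and $Du$, rather than naively invoking a trace theorem for derivatives, and be careful that the target space is the integer-order Sobolev space $W^{1,p}(\partial\Omega)$ rather than the Besov space $B^{1}_{p,p}(\partial\Omega)$ that a straight application of the interpolation trace theorem would yield.
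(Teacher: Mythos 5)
The paper does not offer a proof of this lemma at all --- it simply cites \cite{Evans1992} and moves on --- so there is no paper argument to compare yours against. The assessment therefore rests entirely on the internal soundness of your sketch, and there is a genuine gap at the decisive step.

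You correctly flag that the danger lies in the borderline smoothness and that one must avoid naively landing in $B^1_{p,p}(\partial\Omega)$ rather than $W^{1,p}(\partial\Omega)$. But your attempt to dodge this does not close the gap. After reducing to the half-space and to tangential derivatives $w=\partial_{x_i}u$, you assert that the task becomes proving $\|w(\cdot,0)\|_{L^p(\R^{n-1})}\lesssim\|w\|_{W^{1,p}(\R^n_+)}+[w]_{B^{1/p,p}_p(\R^n_+)}$. However, the hypothesis $u\in W^{1+1/p,p}(\Omega)$ only gives $w\in B^{1/p,p}_p$; the quantity $\|w\|_{W^{1,p}(\R^n_+)}$ is not controlled and in general is infinite. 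Strip that unavailable term away and you are left asking for the trace map $W^{1/p,p}(\R^n_+)\to L^p(\R^{n-1})$, which is exactly the $s=1/p$ borderline case where the trace operator is \emph{not} bounded. Your concrete mechanism exhibits the same problem: the integration by parts produces the term $\int_0^\infty\psi(x_n)\partial_{x_n}w\,dx_n$, and $\partial_{x_n}w=\partial_{x_n}\partial_{x_i}u$ is a second mixed derivative of $u$ that the hypotheses do not place in any Lebesgue space. The appeal to "a Hardy-type inequality for the Gagliardo seminorm" cannot rescue this, because the fractional Hardy inequality with weight $x_n^{-sp}$ fails precisely at $s=1/p$.

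More fundamentally, for this reason the target space in the statement cannot, in general, be improved from $B^1_{p,p}(\partial\Omega)$ to $W^{1,p}(\partial\Omega)$: the trace of $B^{1+1/p}_{p,p}(\R^n_+)$ is onto $B^1_{p,p}(\R^{n-1})$, and $B^1_{p,p}\subset W^{1,p}$ holds only for $p\leq 2$, with the strict reverse inclusion for $p>2$. So for $p>2$ no argument of the type you sketch --- or any other --- can place the trace in $W^{1,p}(\partial\Omega)$ without additional hypotheses, and your proof strategy is trying to prove something that your own last paragraph implicitly recognises may be out of reach. If you wish to pursue this direction, verify first what the correct target space really is; even then, be prepared to replace the displayed estimate with one whose right-hand side involves only $\|u\|_{W^{1,p}}$ and $[Du]_{B^{1/p,p}_p}$, and to confront head-on why the $\partial_{x_n}w$ term can be discarded or absorbed, which is precisely where the difficulty sits.
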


Finally we recall the following well-known result (see \cite{Evans1992} for the ingredients of the proof) which will justify extending $u$ by extensions of $g$.
\begin{lemma}\label{lem:extension}
Let $p\in[1,\infty]$. Let $V\Supset \Omega$ be an open, bounded set.
Suppose $u\in W^{1,p}(\Omega)$ and $v\in (u+W^{1,p}_0(\Omega))\cap W^{1,p}(V)$. Then the map
\begin{align*}
w = \begin{cases}
	u \text{ in } \Omega \\
	v \text{ in } V\setminus\Omega
	\end{cases}
\end{align*}
	belongs to $W^{1,p}(V)$.
\end{lemma}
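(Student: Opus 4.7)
The plan is to exploit the fact that $\psi:=v-u$, viewed as a function on $\Omega$, lies in $W^{1,p}_0(\Omega)$ by hypothesis. One observes that $w$ and $v$ differ only on $\Omega$, and there by exactly $-\psi$; on $V\setminus\Omega$ they agree. Hence if we let $\tilde\psi$ denote the extension of $\psi$ by zero from $\Omega$ to all of $V$, then as functions on $V$,
\begin{align*}
w = v - \tilde\psi.
\end{align*}
Since $v\in W^{1,p}(V)$ already, the whole task reduces to showing $\tilde\psi\in W^{1,p}(V)$.

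For this I would invoke the defining property of $W^{1,p}_0(\Omega)$: there exists a sequence $\psi_k\in C^\infty_c(\Omega)$ with $\psi_k\to\psi$ in $W^{1,p}(\Omega)$. The zero extensions $\tilde\psi_k$ belong to $C^\infty_c(V)$ (here it matters that $V$ is open and contains $\Omega$, so the compact support of $\psi_k$ inside $\Omega$ remains compactly contained in $V$). A direct computation shows $\{\tilde\psi_k\}$ is Cauchy in $W^{1,p}(V)$, because
\begin{align*}
\|\tilde\psi_k-\tilde\psi_j\|_{W^{1,p}(V)}=\|\psi_k-\psi_j\|_{W^{1,p}(\Omega)}\to 0.
\end{align*}
Its limit in $W^{1,p}(V)$ agrees pointwise a.e.\ with $\tilde\psi$, so $\tilde\psi\in W^{1,p}(V)$ with weak gradient equal to $D\psi$ on $\Omega$ and zero on $V\setminus\Omega$.

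Combining both steps gives $w=v-\tilde\psi\in W^{1,p}(V)$, completing the proof. The only substantive point is the zero-extension fact for $W^{1,p}_0$ functions; everything else is bookkeeping. No regularity of $\partial\Omega$ is needed for this argument, which fits the statement since the lemma is posed without any boundary regularity assumption on $\Omega$.
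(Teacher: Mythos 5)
Your proof is correct. The paper gives no proof of this lemma, merely citing Evans and Gariepy for ``the ingredients of the proof,'' and the argument you give --- writing $w = v - \tilde\psi$ with $\tilde\psi$ the extension of $\psi = v-u \in W^{1,p}_0(\Omega)$ by zero, then showing $\tilde\psi\in W^{1,p}(V)$ via an approximating sequence in $C^\infty_c(\Omega)\subset C^\infty_c(V)$ --- is exactly the standard one found there.
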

\subsection{Some properties of Lipschitz and \texorpdfstring{$C^{1,\alpha}$}-domains}
\label{sec:lipschitzDomains}
In this section we recall some properties of Lipschitz and $C^{1,\alpha}$-domains. For further details we refer to \cite{Grisvard1992}.
We say $\Omega\subset\R^n$ is a Lipschitz ($C^{1,\alpha}$) domain if $\Omega$ is an open subset of $\R^n$ and for every $x\in \p\Omega$, there exists a neighbourhood $V$ of $x$ in $\R^n$ and orthogonal coordinates $\{\,y_i\,\}_{1\leq i\leq n}$ such that
\begin{enumerate}
\item $V$ is a hypercube in the new coordinates:
\begin{equation*}
V = \{\,(y_1,\ldots, y_n\,\}: -a_i<y_i<a_i, 1\leq i\leq n-1\,\}.
\end{equation*}
\item there exists a Lipschitz ($C^{1,\alpha}$) function $\phi$ defined in 
\begin{align*}
V'=\{\,(y_1,\ldots y_{n-1}): -a_i<y_i<a_i,1\leq i\leq n-1\,\}
\end{align*}
and such that
\begin{gather*}
|\phi(y')|\leq a_n/2 \text{ for every } y'=(y_1,\ldots, y_{n-1})\in V',\\[3pt]
\Omega\cap V = \{\,y = (y',y_n)\in V: y_n<\phi(y')\,\},\\[3pt]
\p\Omega \cap V = \{\,y=(y',y_n)\in V: y_n = \phi(y')\,\}.
\end{gather*}
\end{enumerate}

Let $\Omega$ be a Lipschitz domain. Then $\Omega$ satisfies a uniform exterior cone condition \cite[Section 1.2.2]{Grisvard1992}: there are $\rho_0,\theta_0>0$ and a map $\pmb n\colon \R^n\to S^{n-1}$ such that for every $x\in \R^n$
\begin{align}\label{eq:uniformCone}
C_{\rho_0}(\theta_0,\pmb n(x))\subset O_{\rho_0}(x)=\left\{\,h\in\R^n: |h|\leq \rho_0, \left(\Omega\setminus B_{3\rho_0}(x)\right)+h\subset\R^n\setminus\Omega\,\right\}.
\end{align}
Moreover there is a smooth vector field transversal to $\p\Omega$, i.e. there exists $\kappa>0$ and ${X\in C^\infty(\R^n,\R^n)}$ such that
\begin{align*}
X\cdot \nu\geq \kappa
\end{align*}
a.e. on $\p\Omega$ where $\nu$ is the exterior unit normal to $\p\Omega$ \cite[Lemma 1.5.1.9.]{Grisvard1992}.

It is well-known that every Lipschitz domain can be written as a finite union of strongly star-shaped Lipschitz domains:
\begin{lemma}\label{lem:starshapedLipschitz}
Suppose $\Omega$ is a Lipschitz domain. Then there is $N>0$ and strongly starshaped Lipschitz domains $\Omega_i,\omega_i$, $i=1,\ldots N$ 
such that $\omega_i\Subset\Omega_i$ relative to $\Omega$, ${\p\omega_i\cap\p\Omega \Subset \p\Omega_i\cap\p\Omega}$ if $\p\Omega_i\cap\Omega\neq\emptyset$ and
\[\Omega = \bigcup_{i=1}^N \Omega_i=\bigcup_{i=1}^N \omega_i.\]
Moreover given $g\in W^{1+\frac 1 q,q}(\Omega)$ and $u\in W^{1,p}_g(\Omega)$, we can ensure that $u\in W^{1,p}(\p\Omega_i)$ for $i=1,\ldots,N$ with $\|u\|_{W^{1,p}(\p\Omega_i)}\lesssim \|u\|_{W^{1,p}(\Omega_i)}+\|g\|_{W^{1+1/q,q}(\Omega)}$.
\end{lemma}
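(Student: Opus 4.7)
The plan is to build the decomposition by a standard local chart argument, then shrink to obtain the $\omega_i$, and finally exploit a Fubini-type slicing argument to secure the trace bound. For the first step, I cover $\overline\Omega$ by finitely many interior balls $B \Subset \Omega$ together with finitely many chart neighbourhoods of the type described in Section \ref{sec:lipschitzDomains}, using compactness of $\overline\Omega$. In each chart $V$ one has $\Omega \cap V = \{y_n < \phi(y')\}$ for an $L$-Lipschitz $\phi$; inside $V$ I pick a small axis-aligned box $Q \subset V$ and set $\Omega_i := \Omega \cap Q$. A direct computation using the Lipschitz bound on $\phi$ (equivalently the cone condition \eqref{eq:uniformCone}) shows that for $Q$ small enough relative to $L$, the set $\Omega_i$ is strongly star-shaped with respect to a ball placed near the bottom face of $Q$, since segments from that ball to any point below the graph stay below the graph. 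Interior balls are trivially star-shaped, yielding a finite cover $\{\Omega_i\}$ of $\Omega$ by strongly star-shaped Lipschitz domains.

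Next, I shrink each box $Q_i$ concentrically to a slightly smaller box $Q_i'$ (and interior balls analogously) and set $\omega_i := \Omega \cap Q_i'$. Each $\omega_i$ inherits the subgraph-in-box structure, hence remains a strongly star-shaped Lipschitz domain; by construction $\overline{\omega_i} \cap \Omega \subset \Omega_i$, so $\omega_i \Subset \Omega_i$ relative to $\Omega$, and $\overline{\omega_i} \cap \p\Omega = \overline{Q_i'} \cap \p\Omega \Subset Q_i \cap \p\Omega = \p\Omega_i \cap \p\Omega$, which gives the required compact inclusion. A Lebesgue number argument applied to the original cover guarantees that, for the shrinking small enough, $\{\omega_i\}$ still covers $\Omega$.

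For the moreover part, split $\p\Omega_i = (\p\Omega_i \cap \p\Omega) \cup (\p\Omega_i \cap \Omega)$. On the boundary piece $\tp{Tr}\,u = \tp{Tr}\,g$, and Lemma \ref{lem:traceTheorem} applied at regularity $1 + \tfrac{1}{q}$ yields $\tp{Tr}\,g \in W^{1,q}(\p\Omega) \hookrightarrow W^{1,p}(\p\Omega)$ (using $p \leq q$), with norm controlled by $\|g\|_{W^{1+1/q,q}(\Omega)}$. For the interior piece $\p\Omega_i \cap \Omega$, which consists of faces of $Q_i$ sitting in $\Omega$, I exploit the freedom in choosing $Q_i$: each such face is embedded in a one-parameter family of parallel slices $\Sigma_s$ obtained by translating the face across a thin slab. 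By Fubini applied to $|u|^p + |Du|^p \in L^1(\Omega)$, for a.e.\ value of the parameter
\[
\int_{\Sigma_s}\bigl(|u|^p + |D_\Sigma u|^p\bigr)\,\d\mathcal H^{n-1} \lesssim \|u\|_{W^{1,p}(\Omega_i)}^p,
\]
where $D_\Sigma u$ denotes the tangential derivatives; the restriction $u|_{\Sigma_s}$ then lies in $W^{1,p}(\Sigma_s)$ by the ACL characterisation of Sobolev functions. Only finitely many faces across all $\Omega_i$ are involved, so a generic choice of translation parameters simultaneously meets the trace requirement while preserving star-shapedness and the covering property. The main technical subtlety is precisely this simultaneous selection, but each of the constraints holds on a full-measure or open-dense set of parameters, so a finite intersection supplies an admissible decomposition.
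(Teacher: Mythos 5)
Your proposal follows essentially the same approach as the paper: local charts near the boundary together with interior balls, star-shapedness of "subgraph-in-cylinder/box" regions from the Lipschitz graph structure (provided the chart is small enough relative to the Lipschitz constant), shrinking to obtain the $\omega_i$, and a Fubini-type slicing/selection argument to pick chart parameters for which the interior faces carry an $L^p$-controlled trace, with the $\partial\Omega$-portion handled through $g\in W^{1+1/q,q}$ and the trace theorem. The paper works with cylinders $\Omega_x^{\varepsilon',\rho'}$ and averages the trace integral over $(\varepsilon',\rho')$ whereas you use axis-aligned boxes and translate faces, but these are cosmetic variants of the same idea.
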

\begin{proof}
We recall the main points of the proof and refer to \cite[Proposition 2.5.3]{Carbone2019} for details.
Let $x\in\p\Omega$. Then for $\e$ and $\rho$ sufficiently small we may assume that $I_x$ is a neighbourhood of $x$ such that, writing $\hat y = (y_1,\ldots,y_{n-1})$,
\begin{align*}
I_x\cap \overline\Omega = \{\,y\in \R^n: -\e<y_n\leq \tau(\hat y),\qquad \hat y \in B_\rho(0)\,\}
\end{align*}
where $\tau$ is Lipschitz function. We may further assume that $\rho\leq \frac{\tau(0)}{2c}$ where $c$ denotes the Lipschitz constant of $\tau$. We now consider 
\[
\Omega_x^{\e',\rho'} = \{\,y\in \R^n : -\e'<y_n<\tau(\hat y), \qquad \hat y \in B_{\rho'}(0)\,\}
\]
 with $\e'\in(0,\e)$, $\rho'\in (0,\rho)$.
 A straightforward argument shows that $\Omega^{\e',\rho'}$ is strongly star-shaped with respect to $0$ as well as a Lipschitz domain.
Since
\begin{align*}
\int_0^\e\int_0^\rho \int_{\p\Omega_x^{\e',\rho'}} (|u|^p+|Du|^p)\lesssim \int_{I_x\cap\Omega} |Du|^p\d x
\end{align*}
we can choose $\tilde\e\in(0,\e)$ and $\tilde\rho\in(0,\rho)$ such that 
\begin{align*}
\int_{\p\Omega_x^{\e',\rho'}\setminus\p\Omega}|u|^p+|Du|^p\lesssim c(\e,\rho)\|u\|_{W^{1,p}(\Omega)}.
\end{align*}
We now set $\Omega_x = \Omega_x^{\tilde\e,\tilde\rho}$ and $\omega_x = \Omega_x^{\tilde \e/2,\tilde\rho/2}$.
If $x\in\Omega$, by a similar argument we find a ball $B_{\tilde \e}(x)$ with $\tilde\e\in(0,\p\Omega/2)$ such that $\|u\|_{W^{1,p}(\p B_{\tilde \e}(x))}\lesssim c(d(x,\p\Omega))\|u\|_{W^{1,p}(B_{\tilde \e}(x))}$ and set $\Omega_x = B_{\tilde \e}(x)$, $\omega_x = B_{\tilde \e/2}(x)$. A standard compactness argument shows that the cover $\{\,\omega_x\,\}_{x\in\Omega}$ of $\Omega$ contains a finite subcover with the desired properties.
\end{proof}

The next lemma enables us to to stretch a small neighbourhood of the boundary in a controlled manner. This will be crucial in constructing sequences with improved integrability but unchanged boundary behaviour.
\begin{lemma}\label{lem:diffeos}
Suppose $\Omega$ is a $C^{1,\alpha}$-domain. Then
there is a family of domains $\Omega^s\Supset\Omega$ and a family of $C^{1,\alpha}$-diffeomorphisms $\Psi_s\colon \Omega^s\to\Omega$ such that
\begin{enumerate}
\item $J\Psi_s\to 1$ and $|D\Psi_s-\tp{Id}|\to 0$ uniformly in $\Omega^s$ as $s\nearrow 1$. Equivalently, $J\Psi_s^{-1}\to 1$ and $|D\Psi_s^{-1}-\tp{Id}|\to 0$ uniformly in $\Omega$ as $s\nearrow 1$.
\item If $g\in W^{1+\frac 1 q,q}(\Omega)$ there is an extension $\hat g$ of $g$ to $\Omega^s$ such that $\hat g\in W^{1,q}(\Omega^s)$ and $\hat g\circ \Psi_s^{-1}\in g+W^{1,q}(\Omega)$.
\end{enumerate}
\end{lemma}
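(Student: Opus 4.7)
The plan is to construct $\Omega^s$ and $\Psi_s$ as time-$(1-s)$ flows of an inward-pointing vector field on $\R^n$. By Section \ref{sec:lipschitzDomains}, $\Omega$ admits a smooth transversal vector field $X \in C^\infty(\R^n,\R^n)$ with $X \cdot \nu \geq \kappa$ on $\p\Omega$, where $\nu$ denotes the exterior unit normal. Replacing $X$ by $-X$ (and multiplying by a compactly supported cut-off equal to $1$ on an open neighbourhood of $\overline \Omega$), we may assume $X$ is globally defined and strictly inward-pointing on $\p\Omega$. Let $\Phi_t$ denote the flow of $X$; this is a $C^\infty$-diffeomorphism of $\R^n$ for every $t \in \R$, and $\Phi_t \to \tp{Id}$ in $C^1_{\loc}$ as $t \to 0$.

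Define $\Omega^s = \Phi_{-(1-s)}(\Omega)$ and $\Psi_s = \Phi_{1-s}|_{\Omega^s}\colon \Omega^s \to \Omega$ for $s$ near $1$. Since $X$ is strictly inward on $\p\Omega$, for $1-s$ small enough, $\Phi_{-(1-s)}$ pushes $\overline\Omega$ strictly into the interior of $\Omega^s$, so $\Omega \Subset \Omega^s$. As $\Phi_{-(1-s)}$ is a $C^\infty$-diffeomorphism it preserves $C^{1,\alpha}$-regularity of boundaries, so $\Omega^s$ is $C^{1,\alpha}$ and $\Psi_s$ is a $C^{1,\alpha}$-diffeomorphism (in fact $C^\infty$). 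Property (i) follows from standard ODE continuous dependence on parameters: $\Phi_t \to \tp{Id}$ in $C^1$ as $t\to 0$, uniformly on compact sets, so $D\Psi_s \to \tp{Id}$ and $J\Psi_s \to 1$ uniformly on $\overline{\Omega^s}$ as $s \nearrow 1$; the equivalent statement for $\Psi_s^{-1} = \Phi_{-(1-s)}|_{\Omega}$ is identical.

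For (ii), transversality of $X$ to $\p\Omega$ implies that $(x,t) \mapsto \Phi_{-t}(x)$ is a $C^\infty$-diffeomorphism from $\p\Omega \times (-\delta,1-s)$ onto a tubular neighbourhood $N$ of $\p\Omega$, provided $\delta$ and $1-s$ are small. This neighbourhood contains $\Omega^s \setminus \overline\Omega$, and we denote by $\pi\colon N \to \p\Omega$ the associated projection along flow lines. By Lemma \ref{lem:traceTheorem}, $g$ has a trace $\gamma = g|_{\p\Omega}\in W^{1,q}(\p\Omega)$. Set
\begin{align*}
\hat g(y) = \begin{cases} g(y), & y \in \Omega,\\ \gamma(\pi(y)), & y \in \Omega^s \setminus \Omega.\end{cases}
\end{align*}
On $\Omega^s \setminus \overline\Omega$, $\hat g$ is constant along flow lines of $X$ and inherits $W^{1,q}$-regularity from $\gamma$ via the smooth tubular parametrisation (with bounded Jacobians). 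The traces from either side of $\p\Omega$ agree with $\gamma$, so by Lemma \ref{lem:extension} $\hat g \in W^{1,q}(\Omega^s)$. Finally, for $x \in \p\Omega$, $\Psi_s^{-1}(x) = \Phi_{-(1-s)}(x)$ lies on the flow line through $x$, hence $\pi(\Psi_s^{-1}(x)) = x$ and $(\hat g \circ \Psi_s^{-1})(x) = \gamma(x) = g(x)$ in the trace sense, which yields $\hat g \circ \Psi_s^{-1} \in g + W^{1,q}_0(\Omega) \subset g + W^{1,q}(\Omega)$ as required.

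The main technical point is the $W^{1,q}$-regularity of the flow-line extension on $\Omega^s\setminus\overline\Omega$: this reduces to bounding the Jacobians of the tubular parametrisation, which is possible because $X$ is smooth, and to controlling the tangential derivatives of $\hat g$ by $|D\gamma|$ while the transversal derivative vanishes by construction.
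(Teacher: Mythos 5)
Your proposal is correct and follows essentially the same route as the paper: both constructions start from the smooth vector field $X$ transversal to $\p\Omega$ (from Lemma 1.5.1.9 of \cite{Grisvard1992}), use its flow to define the collar-stretching diffeomorphism, and extend $g$ to $\Omega^s\setminus\Omega$ by transporting the boundary trace $\gamma=\tp{Tr}\, g$ constantly along flow lines. The only meaningful difference is in how $\Psi_s$ is assembled: the paper reparametrises the flow time by a family $\tau_s$ inside a fixed collar $V$ and glues to the identity outside $V$, whereas you take the global flow $\Phi_{\pm(1-s)}$ of the (truncated, hence complete) vector field and restrict it; your route avoids the matching conditions on $\tau_s$ at the inner collar boundary, at the minor cost of having $\Psi_s$ move interior points as well. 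Both give a $C^{1,\alpha}$-diffeomorphism with $D\Psi_s\to\tp{Id}$ uniformly, and both identify $\hat g\circ\Psi_s^{-1}-g$ as having zero trace because $\pi\circ\Phi_{-(1-s)}=\tp{id}$ on $\p\Omega$. One small point worth spelling out for full rigour is the forward-invariance $\Phi_t(\overline\Omega)\subset\Omega$ for small $t>0$ (you assert this via ``$X$ strictly inward''); this is the standard Nagumo/trapping argument made uniform on the compact set $\p\Omega$ using $X\cdot\nu\geq\kappa>0$, and the paper likewise uses it implicitly when asserting $\Omega^s\Supset\Omega$.
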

\begin{proof}
Let $X\in C^\infty(\R^n,\R^n)$ be 
a smooth vector field transversal to $\p\Omega$. Fix $t_0>0$.
Given $z\in\p\Omega$ and for $|t|\leq 2t_0$ consider the flow
\begin{align*}
\frac{d h_z}{d t} =& X(h(t))\\
h_z(s)=& z
\end{align*}
and set $\Psi(z,t)=h_z(t)$. After possibly reducing the value of $t_0$, the maps $\Psi,\Psi^{-1}$ are $C^{1,\alpha}$-regular diffeomorphisms on a neighbourhood of $\p\Omega$ which we denote $V$. Moreover the Jacobians of $\Psi$, $\Psi^{-1}$ are bounded. 

Let $\frac 1 2\leq s<1$. Consider ${\tau_s\colon [-t_0,t_0]\to [-t_0,t_0]}$, a sequence of strictly monotonically increasing smooth maps with
\begin{align*}
\tau_s(-t_0)=-t & & \tau_s(t_0)=s^{-1} t_0 & &\tau_s'(-t_0)=-1
\end{align*}
and such that $\tau_s\to \tp{Id}$ in $C^2([-t_0,t_0])$ as $s\to 1$. Define
\begin{align*}
\Psi_s^{-1}(x)=\begin{cases}
			\Psi\left(x_0,\tau_s(t)\right) &\text{ for } x=\Psi(x_0,t)\in V \\
			x &\text{ else }. 
		  \end{cases}
\end{align*}
Set $\Omega^s = \Psi_s^{-1}(\Omega)\subset V\cup \Omega$.
Using the chain rule we note that $\Psi_s^{-1}\colon \Omega\to \Omega^s$ is a $C^{1,\alpha}$-regular diffeomorphism. Denote its inverse by $\Psi_s\colon \Omega^s\to\Omega$ and note that using the Inverse Function Theorem and the chain rule $\Psi_s\to \tp{Id}$ in $C^{1}(\Omega^s)$ as $s\nearrow 1$. In particular, also $J\Psi_s\to 1$ uniformly in $\Omega^s$ as $s\nearrow 1$. To obtain $\hat g$, simply set 
\begin{align*}
\hat g(x)=\begin{cases}
			g(x_0,0) &\text{ for } x=\Psi(x_0,t_0)\in \Omega^s\setminus\Omega \\
			g(x) &\text{ for } x\in \Omega. 
		  \end{cases}
\end{align*}
\end{proof}

We conclude this section by noting a number of extensions we may carry out if $\Omega$ is a Lipschitz domain:
Let $\Omega\Subset B(0,R)$.
Due to \cite{Rychkov1999}, if $g\in W^{s,p}(\Omega)$, then there is an extension ${\tilde g\in W^{s,p}(\R^n,\R^m)}$ with
\begin{align}\label{eq:Gextension}
\|\tilde g\|_{W^{s,p}(\R^n)}\lesssim \|g\|_{W^{s,p}(\Omega)}.
\end{align}
Further we extend $F(x,z)$ to a functional on $B(0,R) \times\R^{n\times m}$, still denoted $F(x,z)$, that satisfies
\begin{gather*}
|F(x,z)-F(y,z)|\lesssim \Lambda |x-y|^\alpha (1+|z|^2)^\frac q 2\\
|F(x,z)|\lesssim \Lambda(1+|z|^2)^\frac q 2
\end{gather*}
by setting for $x\in B(0,R)\setminus\Omega$,
\begin{align*}
F(x,z)= \inf_{y\in\Omega}\left(F(y,z)+\Lambda \left(1+|z|^2\right)^\frac q 2 |x-y|\right).
\end{align*}

\subsection{Whitney-Besicovitch coverings}
In this section we define Whitney-Besicovitch coverings which will be a key tool in our proof of regularity for pointwise minimisers. Whitney-Besicovitch coverings combine properties of Whitney and Besicovitch coverings and were introduced in \cite{Kislyakov2005}. A nice presentation of the theory is given in \cite{Kislyakov2013}. To be precise:
\begin{definition}
A family of dyadic cubes $\{\,Q_i\,\}_{i\in I}$ with mutually disjoint interiors is called a Whitney-covering of $\Omega$ if
\begin{align*}
\bigcup_{i\in I} Q_i = \bigcup_{i\in I} 2Q_i = \Omega\\
5Q_i \cap (\R^n\setminus \Omega) \neq \emptyset.
\end{align*}
 A family of cubes $\{\,K_i\,\}_{i\in I}$ is called a Whitney-Besicovitch-covering (WB-covering) of $\Omega$ if there is a triple $(\delta, M, \varepsilon)$ of positive numbers such that
 \begin{align}\label{def:WBcoverExtension}
  & \bigcup_{i\in I} \frac{1}{1+\delta}K_i = \bigcup_{i\in I} K_i = \Omega\\\label{def:WBcoverMultiplicity}
  & \sum_{i\in I} \chi_{K_i}\leq M\\
  & K_i \cap K_j \neq \emptyset \Rightarrow |K_i \cap K_j| \geq \varepsilon \max(|K_i|,|K_j|).  
 \end{align}

\end{definition}
The existence of a Whitney-covering for $\Omega$ is classical. The refinement to a WB-covering can be found in \cite{Kislyakov2013}:
\begin{theorem}[cf. Theorem 3.15, \cite{Kislyakov2013}]\label{thm:covering}
Let $\Omega$ be an open subset of $\R^n$ with non-empty complement. Let $\{\,Q_i\,\}$ be a family of cubes which are a Whitney covering of $\Omega$. Then the cubes $K_i = \left(1+\frac{1}{6}\right)Q_i$ are a WB-covering of $\Omega$ with $\delta = \frac{1}{6}$, $\e = \frac{1}{14^n}$ and $M\leq 6^n-4^n+1$. Moreover for this covering $\frac{2}{\left(1+\frac{1}{6}\right)^\frac{1}{n}}|K_i|^\frac{1}{n}\leq\dist(K_i,\partial\Omega)$.
\end{theorem}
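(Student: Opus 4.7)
The strategy is to verify each WB-covering axiom from two standard Whitney-cube estimates. From the defining conditions $\bigcup 2Q_i=\Omega$ (which forces $2Q_i\subset\Omega$) and $5Q_i\cap(\R^n\setminus\Omega)\neq\emptyset$, one extracts a size--distance bound $\ell_i/2\leq\dist(Q_i,\p\Omega)\leq c_n\ell_i$ for the sidelengths $\ell_i$. This in turn yields the standard neighbour comparability: whenever $K_i\cap K_j\neq\emptyset$, the ratio $\ell_j/\ell_i$ lies in a fixed compact subinterval of $(0,\infty)$ depending only on the enlargement factor $7/6$. I would compute these constants explicitly at the outset and use them throughout.

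With these preliminaries, property (\ref{def:WBcoverExtension}) is essentially immediate: $(1+1/6)^{-1}K_i=Q_i$, so $\bigcup(1+1/6)^{-1}K_i=\bigcup Q_i=\Omega$ from the Whitney property, while the lower bound $\dist(Q_i,\p\Omega)\geq\ell_i/2$, which far exceeds the enlargement radius $\ell_i/12$, gives $K_i\subset\Omega$ and hence $\bigcup K_i=\Omega$. The boundary-distance estimate $\tfrac{2}{(7/6)^{1/n}}|K_i|^{1/n}\leq\dist(K_i,\p\Omega)$ is of the same type: one subtracts the enlargement radius from $\dist(Q_i,\p\Omega)$ and rewrites the lower bound in terms of $|K_i|^{1/n}=(7/6)\ell_i$.

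For the multiplicity bound (\ref{def:WBcoverMultiplicity}), fix $x\in\Omega$ and let $I_x=\{\,i:x\in K_i\,\}$. Each $Q_i$ with $i\in I_x$ lies within distance $\ell_i/12$ of $x$, and by neighbour comparability the $\ell_i$ ($i\in I_x$) differ by a bounded factor, so these $Q_i$ all fit inside a cube of controlled side around $x$. Since the $Q_i$ have disjoint interiors, a volume-packing argument caps $|I_x|$ by a dimensional constant; tracking the sharp constants for the factor $7/6$, while discarding configurations incompatible with the Whitney constraint $5Q_i\cap(\R^n\setminus\Omega)\neq\emptyset$, yields $M\leq 6^n-4^n+1$. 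The overlap condition reduces to the elementary geometric observation that two axis-aligned cubes of comparable side meeting at a common point overlap in a sub-cube whose side is a universal fraction of $\min(\ell_i,\ell_j)$; isolating the worst case permitted by Whitney comparability pins down $\varepsilon=14^{-n}$. The main obstacle throughout is precisely this sharp book-keeping: the quantitative comparability constant must be tracked exactly through the enlargement factor $7/6$, and the worst-case geometric configurations have to be checked against the Whitney defining conditions to obtain the stated constants rather than merely dimensional ones.
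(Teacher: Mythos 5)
This theorem is not proved in the paper at all---it is quoted, with attribution, from Kislyakov--Kruglyak's monograph (``cf.\ Theorem 3.15''); so there is no in-paper proof to compare against, and your proposal can only be judged on its own.

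Your overall strategy is the right one: extract a two-sided comparison between $\ell_i := |Q_i|^{1/n}$ and $\dist(Q_i,\p\Omega)$ from the defining conditions, deduce neighbour comparability, and then run a packing argument for the multiplicity and overlap constants. But as it stands the proposal is an outline rather than a proof: the three quantitative conclusions $M\leq 6^n-4^n+1$, $\e = 14^{-n}$ and the distance estimate are precisely what the theorem asserts, and you explicitly defer them (``tracking the sharp constants'', ``isolating the worst case'') rather than deriving them. For a statement whose entire content is these constants, that deferral is the proof.

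There is also a concrete numerical problem with the distance estimate as you describe it. From $\bigcup 2Q_i=\Omega$ one gets $2Q_i\subset\Omega$, hence $\dist(Q_i,\p\Omega)\geq \ell_i/2$, and the careful version of ``subtract the enlargement'' is to use $K_i\subset 2Q_i\subset\Omega$ to get
\[
\dist(K_i,\p\Omega)\;\geq\;\dist\bigl(K_i,\p(2Q_i)\bigr)\;=\;\ell_i-\tfrac{7}{12}\ell_i\;=\;\tfrac{5}{12}\ell_i\;=\;\tfrac{5}{14}\,|K_i|^{1/n},
\]
(the na\"ive coordinate-wise subtraction $\ell_i/2-\ell_i/12$ followed by a Euclidean corner correction of $\sqrt n\,\ell_i/12$ would even go negative for large $n$, so the route through $2Q_i$ is necessary). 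That yields a constant $5/14\approx 0.36$, which is a factor of roughly five below the stated $2/(7/6)^{1/n}\in(12/7,\,2)$. So the method you describe does not reproduce the asserted constant, and you would have noticed this if you had carried out the computation. Either the constant as transcribed in the paper does not match the definition of Whitney covering given just above it (plausible---the paper only uses the qualitative statement $|K_i|^{1/n}\sim\dist(K_i,\p\Omega)$), or some step of your argument silently assumes a stronger Whitney condition (e.g.\ $4Q_i\subset\Omega$). Either way, this mismatch has to be addressed, not glossed over: one cannot both accept the stated Whitney definition and claim the stated distance constant by the route you sketch.
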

It will be of crucial importance to us that there exists a partition of unity associated to a WB-covering.
\begin{theorem}[cf. Theorem 3.19, \cite{Kislyakov2013}]\label{thm:WBunity}
 Suppose the cubes $\{\,K_i\,\}_{i\in I}$ form a WB-covering of $\Omega$ with constants $(\delta,M,\varepsilon)$. Then there is a family $\{\,\psi_i\,\}_{i\in I}$ of infinitely differentiable functions that form a partition of unity on $\Omega$ with the following properties:
 \begin{align*}
  &\supp (\psi_i) \subset \frac{1+\frac{\delta}{2}}{1+\delta} K_i\\
  &\psi_i(x) \geq \frac{1}{M} \text{ for } x\in \frac{1}{1+\delta} K_i\\
  &|D\psi_i| \leq c \frac{1}{|K_i|^{\frac{1}{n}}}.
 \end{align*}
 for all $i\in I$.
\end{theorem}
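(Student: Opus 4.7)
The plan is to construct the partition of unity in the classical way, starting from individual smooth bump functions on each cube and then normalising by their sum; the key input from the WB-structure will be used twice, once to get a lower bound on the sum and once to control its gradient.

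First, for each $i\in I$ I would pick a radial bump function $\eta_i\in C^\infty_c(\R^n)$ with $0\leq \eta_i\leq 1$, $\eta_i\equiv 1$ on $\frac{1}{1+\delta}K_i$, $\supp\eta_i\subset \frac{1+\delta/2}{1+\delta}K_i$, and with the scaling-invariant gradient bound $|D\eta_i|\leq c/|K_i|^{1/n}$ (the constant $c$ depending only on $n$ and $\delta$). These $\eta_i$ exist by mollifying the indicator of a suitable intermediate cube at scale comparable to the sidelength of $K_i$.

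Next I would set $S(x)=\sum_{i\in I}\eta_i(x)$ and $\psi_i=\eta_i/S$. The lower bound $S\geq 1$ on $\Omega$ follows from the covering property \eqref{def:WBcoverExtension}: every $x\in\Omega$ lies in some $\frac{1}{1+\delta}K_j$, where $\eta_j(x)=1$. The upper bound $S\leq M$ uses the finite multiplicity \eqref{def:WBcoverMultiplicity}, since $\supp\eta_i\subset K_i$. It is then immediate that $\{\psi_i\}$ is a smooth partition of unity with the claimed support and with $\psi_i(x)\geq 1/M$ on $\frac{1}{1+\delta}K_i$.

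The one step that genuinely uses the WB-property rather than just a Whitney property is the gradient bound on $\psi_i$, which I expect to be the main obstacle. Writing
\begin{align*}
D\psi_i=\frac{D\eta_i}{S}-\frac{\eta_i\,DS}{S^2},
\end{align*}
the first term is bounded by $c/|K_i|^{1/n}$ since $S\geq 1$. For the second term, I need $|DS(x)|\lesssim 1/|K_i|^{1/n}$ whenever $x\in\supp\eta_i$. Fix such an $x$; then $|DS(x)|\leq \sum_{j:x\in K_j}|D\eta_j(x)|\leq M\max_{j:x\in K_j}c/|K_j|^{1/n}$. Because $x\in K_i\cap K_j$, the third WB-property gives $|K_i\cap K_j|\geq\varepsilon\max(|K_i|,|K_j|)$; combined with the fact that $K_i,K_j$ are axis-aligned cubes, this forces the sidelengths of $K_i$ and $K_j$ to be comparable with constants depending only on $\varepsilon$ and $n$. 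Hence $|K_j|^{1/n}\gtrsim |K_i|^{1/n}$ uniformly in $j$, which yields $|DS(x)|\lesssim 1/|K_i|^{1/n}$ and finishes the gradient estimate.
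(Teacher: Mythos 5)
The paper does not prove this theorem; it is quoted verbatim from \cite{Kislyakov2013} (Theorem~3.19) as a black box, so there is no in-paper argument to compare against. Your construction is the classical partition-of-unity argument and, as far as I can tell, it is correct: define bump functions $\eta_i$ at the scale of $K_i$, normalise by $S=\sum_i\eta_i$, use property \eqref{def:WBcoverExtension} to get $S\geq 1$, property \eqref{def:WBcoverMultiplicity} to get $S\leq M$ and hence the lower bound on $\psi_i$, and the overlap property to get comparability of sidelengths of intersecting cubes and hence the gradient bound on $DS$ and $D\psi_i$. This is precisely how one would expect the result in \cite{Kislyakov2013} to be proved.

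One small remark: the appeal to the cubes being axis-aligned is superfluous. From $K_i\cap K_j\neq\emptyset$ you have $\min\left(|K_i|,|K_j|\right)\geq |K_i\cap K_j|\geq \varepsilon\max\left(|K_i|,|K_j|\right)$, which already gives $|K_i|\sim_\varepsilon |K_j|$ and therefore $|K_i|^{1/n}\sim_{\varepsilon,n}|K_j|^{1/n}$, with no geometric information about the sets beyond measure. You may also wish to record explicitly why $S$ is a well-defined smooth function (local finiteness of the family $\{\supp\eta_i\}$), which for this particular covering follows from Theorem~\ref{thm:covering} and the Whitney distance condition $\dist(K_i,\partial\Omega)\sim |K_i|^{1/n}$; this is implicit in your use of \eqref{def:WBcoverMultiplicity} but worth a sentence.
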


\subsection{Properties of the relaxed and the regularised functional}
In this section, we collect some results regarding the relaxed function $\overline \F(\cdot)$ and its relation to a regularised version of $\F(\cdot)$. For local versions of $\overline \F(\cdot)$ these results are well-known, c.f. Section 6 in \cite{Marcellini1989} and \cite{Esposito2004}.
Define for $u\in W^{1,p}(\Omega)$
\begin{align*}
\F_\e(u)= \begin{cases}
			\F(u)+\e \int_\Omega |Du|^q\d x &\text{ if } u\in W^{1,q}(\Omega)\\
			\infty &\text{ else}.
			\end{cases}
\end{align*}
Write $F_\e(x,z)=F(x,z)+\e|z|^q$. If $F(x,z)$ is measurable in $x$ and continuously differentiable in $z$, it is easy to see that so is $F_\e(x,z)$. Moreover if $F(x,z)$ satisfies \eqref{def:bounds1}-\eqref{def:bounds3}, then $F_\e(x,z)$ satisfies the bounds
\begin{gather}\label{eq:approxFelliptic}
\nu(\mu^2+|z|^2+|w|^2)^\frac{p-2}{2}\leq\frac{F_\e(x,z)-F_\e(x,w)-\p_z F_\e(x,w)\cdot (z-w)}{|z-w|^2}\\\label{eq:approxFupper}
 |F_\e(x,z)|\leq (\Lambda+\e) (1+|z|^2)^\frac{q}{2} \\
 |\p_z F_\e(x,z)|\lesssim (\Lambda+\e) (1+|z|^2)^\frac{q-1}{2}\label{eq:partialFupper}\\
 \label{eq:approxFholder}
 |F_\e(x,z)-F_\e(y,z)|\leq \Lambda|x-y|^\alpha(1^2+|z|^2)^\frac{q}{2}.
\end{gather}
Further note that if $F(x,z)$ satisfies \eqref{def:changeOfXAlt}, then so does $F_\e(x,z)$.

Minimisers of $\F_\e(\cdot)$ and $\overline{\F}(\cdot)$ are related as follows:
\begin{lemma}[c.f. Lemma 6.4. in \cite{Marcellini1989}]\label{lem:convApproximate} Let $g\in W^{1,q}(\Omega)$. Suppose $F(x,z)$ satisfies \eqref{def:bounds1} and \eqref{def:bounds2}.
Suppose $u$ is a relaxed minimiser of $\F(\cdot)$ in the class $W^{1,p}_g(\Omega)$ and $u_\e$ is the pointwise minimiser of $\F_\e(\cdot)$ in the class $W^{1,q}_g(\Omega)$. Then $\F_\e(u_\e)\to \overline\F(u)$ as $\e\to 0$. Moreover up to passing to a subsequence $u_\e\to u$ in $W^{1,p}(\Omega)$.
\end{lemma}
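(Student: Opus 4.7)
The plan is a standard two-step strategy: first establish convergence of the functional values, then upgrade weak compactness to strong convergence by exploiting the uniform convexity encoded in \eqref{def:bounds1}.

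\emph{Step 1 (existence and upper bound).} Existence and uniqueness of the minimiser $u_\e$ in $W^{1,q}_g(\Omega)$ follow from the direct method applied to $\F_\e$, which is coercive and strictly convex thanks to \eqref{def:bounds1} together with the regularising $\e|z|^q$ term. For the upper bound, choose by the definition of $\overline\F$ a recovery sequence $(v_j)\subset W^{1,q}_g(\Omega)$ with $v_j\rightharpoonup u$ weakly in $W^{1,p}_g(\Omega)$ and $\F(v_j)\to\overline\F(u)$. Minimality of $u_\e$ yields
\[\F_\e(u_\e)\leq \F_\e(v_j)=\F(v_j)+\e\int_\Omega |Dv_j|^q\,dx,\]
and sending first $\e\to 0$ and then $j\to\infty$ gives $\limsup_{\e\to 0}\F_\e(u_\e)\leq \overline\F(u)$.

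\emph{Step 2 (compactness and lower bound).} The $\limsup$ bound combined with \eqref{eq:pnormlower} and standard Poincar\'e/Young estimates to absorb the forcing term shows that $(u_\e)$ is uniformly bounded in $W^{1,p}_g(\Omega)$. Extract a subsequence $u_{\e_k}\rightharpoonup \tilde u$ in $W^{1,p}_g(\Omega)$. Since $u_{\e_k}\in W^{1,q}_g(\Omega)$ one has $\overline\F(u_{\e_k})=\F(u_{\e_k})\leq \F_{\e_k}(u_{\e_k})$, so weak sequential lower semicontinuity of $\overline\F$ delivers $\overline\F(\tilde u)\leq\liminf_k \F_{\e_k}(u_{\e_k})$. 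Chaining with Step 1 and minimality of $u$,
\[\overline\F(u)\leq \overline\F(\tilde u)\leq \liminf_k\F_{\e_k}(u_{\e_k})\leq \limsup_k\F_{\e_k}(u_{\e_k})\leq \overline\F(u),\]
so equality holds throughout. This proves $\F_\e(u_\e)\to\overline\F(u)$ (and a squeeze extends the convergence to the whole sequence) and identifies $\tilde u$ as a relaxed minimiser.

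\emph{Step 3 (strong convergence).} Uniform convexity derived from \eqref{def:bounds1} via Lemma \ref{lem:Vfunctional} yields a constant $c>0$ with
\[F\!\left(x,\tfrac{z+w}{2}\right)\leq \tfrac12 F(x,z)+\tfrac12 F(x,w)-c\,|V_{\mu,p}(z)-V_{\mu,p}(w)|^2.\]
Applied pointwise with $z=Du_{\e_k}$, $w=Dv_j$ and integrated (the linear $-f\cdot u$ term cancels by convex combination), minimality of $u_{\e_k}$ against the admissible competitor $(u_{\e_k}+v_j)/2\in W^{1,q}_g(\Omega)$ gives
\[c\int_\Omega |V_{\mu,p}(Du_{\e_k})-V_{\mu,p}(Dv_j)|^2\,dx\leq \tfrac12\bigl(\F_{\e_k}(v_j)-\F_{\e_k}(u_{\e_k})\bigr).\]
For fixed $j$ the right-hand side converges to $\tfrac12(\F(v_j)-\overline\F(u))$ as $k\to\infty$, which vanishes as $j\to\infty$. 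The same convexity inequality applied on pairs $(v_j+v_\ell)/2\rightharpoonup u$ combined with the lsc identity from Step 2 shows $V_{\mu,p}(Dv_j)$ is Cauchy in $L^2$ along a subsequence; extracting an a.e.\ subsequence and using $Dv_j\rightharpoonup Du$ in $L^p$ pins the limit down as $V_{\mu,p}(Du)$. A diagonal choice $(\e_k,j_k)$ then forces $V_{\mu,p}(Du_{\e_k})\to V_{\mu,p}(Du)$ in $L^2(\Omega)$, and Lemma \ref{lem:Vfunctional} together with Vitali's theorem converts this into $Du_{\e_k}\to Du$ in $L^p(\Omega)$. Rellich gives $L^p$-convergence of $u_{\e_k}$ itself.

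The main obstacle is Step 3: because $u$ is only known to lie in $W^{1,p}_g(\Omega)$, the uniform convexity inequality cannot be tested directly at $Du$ and the argument has to be routed through the recovery sequence $(v_j)$; the delicate points are the diagonal extraction coupling $\e_k\to 0$ with $j_k\to\infty$ and the Cauchy analysis for $V_{\mu,p}(Dv_j)$ that identifies the correct a.e.\ limit.
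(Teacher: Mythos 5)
Your proof is correct and follows the same high-level strategy as the paper---direct method plus quantitative strong convexity---but the execution of Step~3 is genuinely different, and the difference is worth noting.

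The paper's route is to first record the pointwise strong-convexity estimate at the level of $W^{1,q}_g(\Omega)$ functions, namely
\[
\overline \F\Bigl(\tfrac{w_1+w_2}{2}\Bigr)+\tfrac{\nu}{p}\|Dw_1-Dw_2\|_{L^p(\Omega)}^p\leq \tfrac12\bigl(\overline \F(w_1)+\overline \F(w_2)\bigr),
\]
and then \emph{extend this inequality to all $w_1,w_2\in W^{1,p}_g(\Omega)$}, using the definition of $\overline\F$ together with weak lower semicontinuity of the $L^p$ norm. Once the inequality holds at the $W^{1,p}_g$ level, one simply inserts $w_1=u$ and $w_2=u_\e$: minimality of $u$ gives $\overline\F\bigl(\tfrac{u+u_\e}{2}\bigr)\geq\overline\F(u)$, and $\overline\F(u_\e)=\F(u_\e)\leq\F_\e(u_\e)\to\overline\F(u)$ forces $\|Du-Du_\e\|_{L^p}\to 0$ at once. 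This also yields strict convexity of $\overline\F$, hence uniqueness of the relaxed minimiser, in a single stroke. Your version stays at the $W^{1,q}_g$ level: you test the convexity estimate on the pairs $(u_{\e_k},v_j)$ and $(v_j,v_\ell)$ via competitors in $W^{1,q}_g(\Omega)$, prove $V_{\mu,p}(Dv_j)$ is Cauchy in $L^2$, identify the limit with $V_{\mu,p}(Du)$ through a.e.\ convergence plus $Dv_j\rightharpoonup Du$, and finally diagonalise. This is sound (for the Cauchy step one must use the \emph{definition} of $\overline\F$ to get $\liminf_{j,\ell}\F\bigl(\tfrac{v_j+v_\ell}{2}\bigr)\geq\overline\F(u)$, not mere lower semicontinuity of $\F$, since otherwise one would only obtain $\F(u)$ which may be strictly below $\overline\F(u)$ in the presence of a Lavrentiev gap), but it is noticeably heavier than the paper's. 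Your closing remark that ``the uniform convexity inequality cannot be tested directly at $Du$'' is not quite right: it \emph{can}, once \eqref{eq:convexArg} has been extended to $W^{1,p}_g(\Omega)$, and that extension is precisely what makes the paper's argument short.
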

\begin{proof}
Note that by our assumptions $f\cdot u\in L^1(\Omega)$.
Hence existence and uniqueness of $u_\e$ follows from the direct method and strict convexity, respectively.
We further note that 
\begin{align*}
\overline\F(u)\leq \liminf_{\e\to 0}\F(u_\e) \leq \liminf_{\e\to 0}\F_\e(u_\e).
\end{align*}
 To prove the reverse implication note that for any ${v\in W^{1,q}_g(\Omega)}$,
\begin{align*}
\limsup_{\e \to 0} \F_\e(u_\e) \leq \lim_{\e \to 0} \F_\e(v)= \F(v)= \overline \F(v).
\end{align*}
By definition of $\overline\F(\cdot)$ the inequality above extends to all $v\in W^{1,p}_g(\Omega)$. In particular, it holds with the choice $v=u$. Thus $\F_\e(u_\e)\to \overline \F(u)$.

Using \eqref{def:bounds1} we may extract a (non-relabelled) subsequence of $u_\e$ so that $u_\e\rightharpoonup v$ weakly in $W^{1,p}(\Omega)$ for some $v\in W^{1,p}(\Omega)$. Note using our calculations above that $v$ is a relaxed minimiser of $\F(\cdot)$ in the class $W^{1,p}_g(\Omega)$. Using $\eqref{def:bounds1}$ it is easy to see that for $w_1,w_2\in W^{1,q}_g(\Omega)$,
\begin{align}\label{eq:convexArg}
\overline \F\left(\frac{w_1+w_2}{2}\right)+\frac \nu p\|Dw_1-Dw_2\|_{L^p(\Omega)}^p\leq \frac 1 2\left(\overline \F(w_1)+\overline \F(w_2)\right).
\end{align}
Using the definition of $\overline \F(\cdot)$ and weak lower semicontinuity of norms, we see that this estimate extends to ${w_1,w_2\in W^{1,p}_g(\Omega)}$. In particular, $\overline \F(\cdot)$ is convex and so $u=v$. Moreover the choice $w_1=u,w_2=u_\e$ in the estimate shows that $u_\e\to u$ in $W^{1,p}(\Omega)$.
\end{proof}

A useful criterion for establishing the equality $\overline \F(u)=\F(u)$ is the following:
\begin{lemma}[cf. \cite{Buttazo1995}]\label{lem:lavrientiev}Let $1< p\leq q<\infty$.
For $u\in W^{1,p}_g(\Omega)$ with $\F(u)<\infty$, we have $\overline \F(u)=\F(u)$ if and only if there is a sequence $u_k\in W^{1,q}_g(\Omega)$ such that $u_k\rightharpoonup u$ weakly in $W^{1,p}(\Omega)$ and $\F(u_\e)\to \F(u)$ as $\e\to 0$. 
\end{lemma}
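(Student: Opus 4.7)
The plan is to prove both directions by using the definition of $\overline\F$ together with the fact, noted in the paper, that $\overline\F_X = \F$ on $X = W^{1,p}_g(\Omega)$ (a consequence of convexity of $F(x,\cdot)$). Since $Y = W^{1,q}_g(\Omega) \subset X$, the constraint ``$\mathscr G \leq \F$ on $Y$'' is weaker than ``$\mathscr G \leq \F$ on $X$'', so $\overline\F(u) = \overline\F_Y(u) \geq \overline\F_X(u) = \F(u)$ always. Thus only the matching upper bound $\overline\F(u)\leq\F(u)$ is at stake in both implications.

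For the sufficiency direction, given a sequence $u_k \in W^{1,q}_g(\Omega)$ with $u_k \rightharpoonup u$ weakly in $W^{1,p}$ and $\F(u_k) \to \F(u)$, we plug it directly into the infimum formula \eqref{def:relaxedFunctional} defining $\overline\F$: this yields $\overline\F(u) \leq \liminf_k \F(u_k) = \F(u)$, and together with the trivial lower bound we deduce $\overline\F(u)=\F(u)$.

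For the necessity direction, assume $\overline\F(u) = \F(u) < \infty$. By the definition of $\overline\F$ as an infimum, for every integer $k \geq 1$ we can pick a sequence $(v_j^k)_j \subset W^{1,q}_g(\Omega)$ with $v_j^k \rightharpoonup u$ weakly in $W^{1,p}$ and
\begin{align*}
\liminf_{j\to\infty} \F(v_j^k) \leq \overline\F(u) + \tfrac{1}{k} = \F(u) + \tfrac{1}{k}.
\end{align*}
The coercivity estimate \eqref{eq:pnormlower}, together with control of the linear term $\int f\cdot v_j^k$ via H\"older and Young and the $L^{q'}$-bound on $f$, ensures that all the $v_j^k$ lie in a fixed bounded subset $B$ of $W^{1,p}_g(\Omega)$. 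Since $W^{1,p}(\Omega)$ is separable and reflexive, the weak topology on $B$ is metrizable by some metric $d$. For each $k$ we may therefore select, after passing to a subsequence of $(v_j^k)_j$ realising the $\liminf$, an index $j_k$ with $d(v_{j_k}^k, u) < \tfrac{1}{k}$ and $\F(v_{j_k}^k) < \F(u) + \tfrac{2}{k}$. Setting $u_k := v_{j_k}^k$ produces a sequence in $W^{1,q}_g(\Omega)$ with $u_k \rightharpoonup u$ weakly in $W^{1,p}$ and $\limsup_k \F(u_k) \leq \F(u)$. Weak lower semicontinuity of $\F$ on $W^{1,p}(\Omega)$, equivalent to the identity $\overline\F_X = \F$ already invoked, upgrades this to $\F(u_k) \to \F(u)$.

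The main obstacle is the diagonal extraction in the necessity direction: one must simultaneously arrange weak convergence $u_k \rightharpoonup u$ and convergence of energies $\F(u_k)\to \F(u)$ out of a doubly-indexed family of approximating sequences. This is handled cleanly by combining coercivity (to confine the family to a common bounded set $B\subset W^{1,p}$) with metrizability of the weak topology on $B$, which makes the standard diagonal argument available.
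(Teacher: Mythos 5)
The paper does not supply its own proof of this lemma; it is stated with a citation to \cite{Buttazo1995}, so there is no in-text argument to compare against. Your proof, however, is correct and self-contained, resting only on the envelope identities $\overline\F=\overline\F_Y\geq\overline\F_X=\F$ (the last from convexity, as the paper notes) together with the infimum characterisation \eqref{def:relaxedFunctional}.

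The sufficiency direction is exactly as easy as you say: the given sequence is an admissible competitor in \eqref{def:relaxedFunctional}, so $\overline\F(u)\leq\liminf_k\F(u_k)=\F(u)$, and the reverse inequality is automatic. For necessity, the diagonal extraction is the right idea and works, but there is one imprecision in the phrasing you should tighten. You assert that coercivity ``ensures that all the $v_j^k$ lie in a fixed bounded subset $B$'' before you have selected any subsequence; this is not available a priori, since nothing controls $\F(v_j^k)$ for arbitrary $j$. What coercivity (via \eqref{eq:pnormlower} and the absorbtion of the $f$-term) actually gives is that any $v$ with $\F(v)\leq\F(u)+2$ lies in a fixed bounded set $B$. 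The correct order is: first pass to the subsequence of $(v_j^k)_j$ realising the $\liminf$, so that eventually $\F(v_j^k)<\F(u)+2/k$; only \emph{then} is the tail confined to $B$, where the weak topology is metrizable by some $d$, and you can pick $j_k$ to secure both $d(v_{j_k}^k,u)<1/k$ and the energy bound simultaneously. Your write-up essentially does carry out these steps in the right sequence, so this is a wording issue rather than a gap, but as stated the sentence about $B$ overclaims. The final upgrade from $\limsup_k\F(u_k)\leq\F(u)$ to $\F(u_k)\to\F(u)$ via weak lower semicontinuity (equivalently, $\overline\F_X=\F$) is correct.
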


We close this section by showing that relaxed minimisers are very weak solutions of the Euler-Lagrange system. We first recall a Lemma from \cite{DeFilippis2020}.
\begin{lemma}\label{lem:improvedLowerBound}
Suppose $F(x,z)$ satisfies \eqref{def:bounds1} and \eqref{def:bounds2}. Then
\begin{align*}
\p_z F(z)\cdot z\gtrsim |z|^q+|\p_z F(z)|^{q'}-1.
\end{align*}
\end{lemma}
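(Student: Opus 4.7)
The plan is to exploit Legendre--Fenchel duality. By \eqref{def:bounds1}, $F(x,\cdot)$ is convex and continuously differentiable, so the Fenchel identity holds pointwise in $z$:
\begin{align*}
\p_z F(x,z)\cdot z = F(x,z) + F^*(x,\p_z F(x,z)),
\end{align*}
where $F^*(x,y):=\sup_{w\in\R^{n\times m}}(y\cdot w-F(x,w))$ denotes the Legendre conjugate of $F(x,\cdot)$. The strategy is then to produce matching lower bounds for each of the two terms on the right and combine them.

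For the first term I would invoke \eqref{eq:pnormlower}, which provides the desired polynomial coercivity $F(x,z)\gtrsim |z|^p-1$; combined with the upper growth \eqref{def:bounds2} (and, where necessary, absorption into the constants via Young's inequality), this delivers the $|z|^q$ contribution on the right-hand side. For the second term, the key is to extract a sharp $q'$-coercivity of $F^*$ out of the $q$-growth upper bound \eqref{def:bounds2}. Testing the supremum defining $F^*$ with $w=ty/|y|$ gives
\begin{align*}
F^*(x,y)\;\geq\; \sup_{t>0}\bigl(t|y|-C(1+t^{2})^{q/2}\bigr),
\end{align*}
and an elementary optimisation, with optimum at $t\sim |y|^{1/(q-1)}$, yields $F^*(x,y)\gtrsim |y|^{q'}-1$, uniformly in $x$. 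Substituting $y=\p_z F(x,z)$ produces the $|\p_z F(x,z)|^{q'}$ contribution.

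Assembling the two lower bounds into the Fenchel identity yields the stated inequality. The main technical point is the dual coercivity estimate for $F^*$: one must carry the optimisation through carefully enough so that the resulting constant depends only on the data $q,\Lambda$ and not on $x$, and so that the correct conjugate exponent $q'$ appears; the remaining steps are then straightforward consequences of \eqref{eq:pnormlower} and the convexity of $F(x,\cdot)$.
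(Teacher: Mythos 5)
Your overall strategy coincides with the paper's: invoke the Legendre--Fenchel identity $\p_z F(x,z)\cdot z = F(x,z)+F^*(x,\p_z F(x,z))$, and then obtain the $q'$-coercivity of $F^*$ from the upper $q$-growth bound \eqref{def:bounds2} by the one-parameter test $w=t\,y/|y|$ and optimisation in $t$. The paper states this last step merely as a ``direct calculation''; your explicit optimisation is exactly what is meant, and is correct.

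The gap is in your handling of the $|z|^q$ term. You propose to obtain it from \eqref{eq:pnormlower}, i.e. $F(x,z)\gtrsim|z|^p-1$, ``combined with the upper growth \eqref{def:bounds2}\dots via Young's inequality''. This does not work: Young's inequality cannot convert a lower bound with exponent $p$ into one with exponent $q>p$, and $F$ itself need not admit any lower bound of order $|z|^q$ --- take $F(x,z)=(\mu^2+|z|^2)^{p/2}$ with $p<q$, which satisfies \eqref{def:bounds1}--\eqref{def:bounds3}, yet $\p_z F(z)\cdot z = p|z|^2(\mu^2+|z|^2)^{p/2-1}\sim|z|^p\not\gtrsim|z|^q$. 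What the Fenchel identity, \eqref{eq:pnormlower}, and your estimate for $F^*$ actually yield is
\begin{align*}
\p_z F(x,z)\cdot z\;\gtrsim\;|z|^{p}+|\p_z F(x,z)|^{q'}-1,
\end{align*}
with exponent $p$, not $q$. This exponent mismatch is not specific to your write-up: the paper's own one-line justification likewise only establishes the $F^*$ half and silently uses $F(x,z)\gtrsim|z|^p-1$ for the other half, and the only downstream application (in Lemma~\ref{lem:relaxedEuler}) uses $|Du_\e|^p$, not $|Du_\e|^q$. So the fix is to replace $|z|^q$ by $|z|^p$ in the statement rather than to search for a genuine $|z|^q$ lower bound; your claim as written is a step that would fail.
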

\begin{proof}
Consider the Fenchel-conjugate of the partial integrand $F(x,z)$,
$$F^*(x,\xi)=\sup_{\zeta\in \R^{n\times m}} \left(\xi\cdot\zeta-F(x,\zeta)\right)\qquad (x,\xi)\in \Omega\times \R^{n\times m}.$$
The Fenchel-Young inequality in the extremal case yields that
\begin{align*}
\p_z F(z)\cdot z = F(x,z)+F^*(z,\p_z F(x,z)).
\end{align*}
To conclude note by direct calculation that, $F^*(x,\p_z F(x,z))\gtrsim |\p_z F(x,\xi)|^{q'}-1$.
\end{proof}

We adapt an argument from \cite{DeFilippis2020}.
\begin{lemma}\label{lem:relaxedEuler}
Let $g\in W^{1,q}(\Omega)$ and suppose $F(x,z)$ satisfies \eqref{def:bounds1} and \eqref{def:bounds2}. Let $u$ be a relaxed minimiser of $\F(\cdot)$ in the class $W^{1,p}_g(\Omega)$. Then $\p_z F(x,Du)\in L^{q'}(\Omega)$ and
\begin{align*}
\int_\Omega \p_z F(x,Du)\cdot D\psi-f\psi\d x=0\qquad \forall \psi\in W^{1,q}_0(\Omega).
\end{align*}
\end{lemma}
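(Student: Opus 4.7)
The plan is to approximate $u$ by the regularised pointwise minimisers $u_\e \in W^{1,q}_g(\Omega)$ of $\F_\e(\cdot)$ supplied by Lemma \ref{lem:convApproximate}, derive the Euler-Lagrange equation at the level of $\F_\e$, establish an $\e$-independent $L^{q'}$ bound for $\p_z F(x,Du_\e)$, and then pass to the limit $\e\to 0$. Since $F_\e$ has $q$-growth and $u_\e \in W^{1,q}_g(\Omega)$, standard variational arguments (differentiating $t\mapsto\F_\e(u_\e+t\psi)$ at $t=0$, with bounds justified by \eqref{eq:partialFupper}) immediately yield the approximate Euler-Lagrange equation
\begin{align*}
\int_\Omega \p_z F_\e(x,Du_\e)\cdot D\psi - f\cdot\psi\d x = 0 \qquad \forall \psi \in W^{1,q}_0(\Omega).
\end{align*}

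For the uniform a priori bound, I would test with the admissible choice $\psi = u_\e - g \in W^{1,q}_0(\Omega)$ and apply Lemma \ref{lem:improvedLowerBound} to $F_\e$, whose upper bound constant $\Lambda+\e$ is controlled uniformly for $\e \leq 1$, to estimate the LHS from below by $c\int |\p_z F_\e(x,Du_\e)|^{q'}\d x$ modulo additive constants. The term $\int \p_z F_\e(x,Du_\e)\cdot Dg$ is handled by Young's inequality, absorbing a small multiple of $\int |\p_z F_\e|^{q'}$ into the LHS and leaving $\|Dg\|_{L^q}^q$ on the right. The term $\int f(u_\e - g)$ is controlled via H\"older together with $\|u_\e - g\|_{L^q}\leq C$: the latter follows from the uniform $W^{1,p}$ bound on $u_\e$ given by Lemma \ref{lem:convApproximate}, Sobolev embedding and the running assumption $q\leq p^*$. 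This yields the $\e$-independent bound $\int_\Omega |\p_z F_\e(x,Du_\e)|^{q'}\d x \leq C$. Separately, using minimality $\F_\e(u_\e)\leq \F_\e(g)$ together with \eqref{eq:pnormlower} gives $\e\int_\Omega |Du_\e|^q\d x \leq C$. Combining these via $\p_z F(x,z) = \p_z F_\e(x,z) - \e q|z|^{q-2}z$ and the elementary observation $\e^{q'}\int |Du_\e|^q = \e^{q'-1}\cdot \e\int|Du_\e|^q\leq C$ for $\e \leq 1$, we obtain the desired uniform bound
\begin{align*}
\|\p_z F(x,Du_\e)\|_{L^{q'}(\Omega)} \leq C.
\end{align*}

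To pass to the limit, I pick a further subsequence along which $Du_\e\to Du$ almost everywhere (guaranteed by strong $W^{1,p}$-convergence from Lemma \ref{lem:convApproximate}). Continuity of $\p_z F$ in $z$ then gives $\p_z F(x,Du_\e)\to \p_z F(x,Du)$ pointwise a.e., and combining this with the uniform $L^{q'}$ bound, Fatou's lemma yields $\p_z F(x,Du)\in L^{q'}(\Omega)$, while Vitali's convergence theorem applied against arbitrary test functions in $L^q$ upgrades the convergence to weak $L^{q'}$ convergence. Moreover, $\|\e|Du_\e|^{q-2}Du_\e\|_{L^{q'}}^{q'} = \e^{q'-1}\cdot \e \int|Du_\e|^q \to 0$, so the $\e$-remainder converges strongly to zero in $L^{q'}$. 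Fixing $\psi \in W^{1,q}_0(\Omega)$, the pairings $\int \p_z F(x,Du_\e)\cdot D\psi$ and $\int \e|Du_\e|^{q-2}Du_\e\cdot D\psi$ therefore converge to $\int \p_z F(x,Du)\cdot D\psi$ and $0$ respectively, yielding the claimed identity.

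The main obstacle is producing the $\e$-independent $L^{q'}$ bound on $\p_z F_\e(x,Du_\e)$: the obvious bound $|\p_z F_\e(x,z)|^{q'} \lesssim 1+|z|^q$ from controlled $q$-growth is useless here because it would require the very $L^q$-bound on $Du_\e$ that we lack, so the argument genuinely relies on the Fenchel-conjugate identity of Lemma \ref{lem:improvedLowerBound} to extract $L^{q'}$ control of $\p_z F_\e$ directly from the Euler-Lagrange equation.
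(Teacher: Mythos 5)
Your proof is correct and follows essentially the same route as the paper: derive the Euler--Lagrange equation for the regularised minimisers $u_\e$, test with $u_\e-g$, use Lemma \ref{lem:improvedLowerBound} (the Fenchel-conjugate identity) to extract an $\e$-uniform $L^{q'}$ bound, and pass to the limit via a.e.\ convergence and equi-integrability. The only cosmetic difference is that you track the $\e q|Du_\e|^{q-2}Du_\e$ correction explicitly and control it from the energy bound $\e\int|Du_\e|^q\leq C$, whereas the paper bundles it into the flux $\sigma_\e+\e\mu_\e$ and appeals to weak $L^1$ convergence; the substance is identical.
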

\begin{proof}
By the direct method we obtain $u_\e \in W^{1,q}_g(\Omega)$ minimising $\F_\e(\cdot)$ in the class $W^{1,q}_g(\Omega)$. Denote $\sigma_\e = \p_z F_\e(\cdot,Du_\e)$ and $\mu_\e = |Du_\e|^{q-2}Du_\e$. Then $u_\e$ satisfies the Euler-Lagrange system
\begin{align*}
\int_\Omega (\sigma_\e+\e\mu_\e)\cdot D\psi-f\psi\d x=0 \qquad \forall \psi\in W^{1,q}_0(\Omega).
\end{align*}
Choose $\psi = u_\e-g$ and use Lemma \ref{lem:improvedLowerBound}, H\"older and Young's inequality to find that for any $\delta>0$,
\begin{align*}
\int_\Omega |Du_\e|^p+|\sigma_\e|^{q'}+\e |Du_\e|^q\d x\lesssim & C\int_\Omega (\sigma_\e+\e \mu_\e)\cdot Du_\e\d x\\
=& C\int_\Omega (\sigma_\e + \e \mu_\e)\cdot Dg+f\cdot (u_\e-g)\d x\\
\leq& \delta \left(\|\sigma_\e\|_{L^{q'}(\Omega)}^{q'}+\|u_\e\|_{L^{q}(\Omega)}^q+\e \|Du_\e\|_{L^q(\Omega)}^q\right)\\
&\qquad+C(\delta)\left(1+\|g\|_{L^q(\Omega)}^q+\|Dg\|_{L^q(\Omega)}^q+\|f\|_{L^{q'}(\Omega)}^{q'}\right)
\end{align*}
Noting that $\|u_\e\|_{L^q(\Omega)}\lesssim \|Du_\e\|_{L^p(\Omega)}$ by our choice of $q$, after choosing $\delta$ sufficiently small we may rearrange to conclude
\begin{align*}
\limsup_{\e\searrow 0}\int_\Omega |\sigma_\e|^{q'}\d x\lesssim 1+\int_\Omega |f|^{q'}+|g|^q+|Dg|^q\d x.
\end{align*}
Since $\p_z F(x,z)$ is Carath\'{e}odory and $Du_\e\to Du$ in measure by Lemma \ref{lem:convApproximate}, we also have $\sigma_\e\to F_z(\cdot,Du)$ in measure. Finally note that $\sigma_\e + \e \mu_\e \rightharpoonup F_z(\cdot,Du)$ in $L^1(\Omega)$, so the latter is row-wise divergence free and since it is also in $L^{q'}(\Omega)$, the result is established.
\end{proof}

\section{Regularity of minimisers of the relaxed functional}\label{sec:relaxed}
The aim of this section is to prove Theorem \ref{thm:regularityRelaxed}. For the convenience of the reader we recall the statement:
\begin{theorem}\label{thm:relaxed}
Suppose $F(x,\cdot)$ satisfies \eqref{def:bounds1}-\eqref{def:bounds3} with $1< p\leq q< \frac{(n+\alpha)p}{n}$. Then if $u$ is a relaxed minimiser of $\F(\cdot)$ in the class $W^{1,p}_g(\Omega)$, $u\in W^{1,q}(\Omega)$. Moreover for any $0\leq \beta<\alpha$ there is $\gamma>0$ such that, we have the estimate
\begin{align*}
\|u\|_{W^{1,\frac{np}{n-\beta}}(\Omega)} \lesssim \left(1+\overline\F(u)+\|g\|_{W^{1+\alpha,q}(\Omega)}+\|f\|_{L^{q'}(\Omega)}\right)^\gamma.
\end{align*}
\end{theorem}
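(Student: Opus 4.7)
\textbf{Strategy and regularisation.} The plan has three steps: regularise, prove a uniform global fractional Besov estimate for the approximating minimisers via a cone-restricted difference quotient argument à la Savar\'e, and pass to the limit. Let $u_\eps\in W^{1,q}_g(\Omega)$ be the unique pointwise minimiser of $\F_\eps$; existence and uniqueness come from the direct method together with the strict convexity in \eqref{eq:approxFelliptic}. By Lemma \ref{lem:convApproximate}, along a subsequence $u_\eps\to u$ in $W^{1,p}(\Omega)$ and $\F_\eps(u_\eps)\to\overline\F(u)$. Since the $W^{1,np/(n-\beta)}$-norm is weakly lower semicontinuous, it suffices to prove the bound for $u_\eps$ with constants independent of $\eps$.

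\textbf{Global difference quotient estimate.} Extend $g$ to $\tilde g\in W^{1+\alpha,q}(\R^n)$ via \eqref{eq:Gextension}, extend $F$ to $\R^n\times\R^{n\times m}$ as at the end of Section \ref{sec:lipschitzDomains}, and extend $u_\eps$ by $\tilde g$ outside $\Omega$ (Lemma \ref{lem:extension}). Using the uniform exterior cone condition \eqref{eq:uniformCone}, fix a finite cover $\{U_i\}_{i=1}^N$ of $\overline\Omega$ by balls with a subordinate partition of unity $\{\eta_i\}$ such that on each $U_i$ meeting $\p\Omega$ a single direction $\pmb n_i$ yields an exterior cone $D_i=C_{\rho_0/2}(\theta_0/2,\pmb n_i)$ valid at every boundary point of $U_i$. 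Following Savar\'e's scheme, for $h\in D_i$ of small modulus one tests the Euler--Lagrange equation of $u_\eps$ (Lemma \ref{lem:relaxedEuler} applied to $\F_\eps$) against a function of the form
\[
\phi_h\;\approx\;\eta_i^{2}\bigl(\tau_h(u_\eps-\tilde g)-(u_\eps-\tilde g)\bigr),
\]
which by the cone choice belongs to $W^{1,q}_0(\Omega)$. Subtracting the $h$-translated equation and combining the strong monotonicity \eqref{eq:approxFelliptic}, the H\"older bound \eqref{eq:approxFholder}, the upper bound \eqref{eq:partialFupper}, Lemma \ref{lem:Vfunctional}, Young's inequality, and the uniform $L^q$-bound on $Du_\eps$ from Lemma \ref{lem:improvedLowerBound} yields
\[
\int_{U_i\cap\Omega}\eta_i^{2}\bigl|V_{\mu,p}(Du_\eps(\cdot+h))-V_{\mu,p}(Du_\eps)\bigr|^{2}\,dx\;\lesssim\;|h|^{2\alpha}\mathcal{M},
\]
with $\mathcal{M}:=1+\overline\F(u)+\|g\|_{W^{1+\alpha,q}(\Omega)}^{q}+\|f\|_{L^{q'}(\Omega)}^{q'}$. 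The $W^{1+\alpha,q}$-regularity of $g$ is precisely what absorbs the boundary error coming from $\tau_h\tilde g-\tilde g$ at cost $|h|^\alpha$.

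\textbf{Besov bound and limit.} The comparison in Lemma \ref{lem:Vfunctional} and H\"older's inequality, together with the uniform $L^q$-bound on $Du_\eps$, upgrade the above to
\[
\int_{U_i\cap\Omega}\eta_i^{2}|Du_\eps(\cdot+h)-Du_\eps|^{p}\,dx\;\lesssim\;|h|^{\alpha\min(p,2)}\mathcal{M}^{\theta}
\]
for some $\theta>0$. Taking the supremum over $h\in D_i$ and using the cone form of the Besov characterisation after \eqref{eq:besovcharacterisation} gives $\|Du_\eps\|_{B^{s,p}_{\infty}(U_i\cap\Omega)}\lesssim\mathcal M^\theta$ uniformly in $\eps$ whenever $sp<\alpha$. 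The localisation \eqref{eq:besovlocalisation} assembles these local bounds into a global $\|Du_\eps\|_{B^{s,p}_{\infty}(\Omega)}$-bound, and Theorem \ref{thm:embedding} applied with $s=\beta/p$ converts it into a uniform bound on $\|Du_\eps\|_{L^{np/(n-\beta)-\delta}(\Omega)}$ for every $\beta<\alpha$ and $\delta>0$. Absorbing $\delta$ into a redefinition of $\beta$ and using lower semicontinuity along $u_\eps\to u$ in $W^{1,p}$ transfers the bound to $u$.

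\textbf{Main obstacle.} The core difficulty is the boundary difference quotient step. Admissibility of $\phi_h$ restricts $h$ to a cone whose direction varies over $\p\Omega$, forcing the localisation via a finite boundary cover; the cone form of \eqref{eq:besovcharacterisation} is exactly what keeps this localisation compatible with the target fractional Sobolev bound. The sharp absorption of the boundary error $\tau_h\tilde g-\tilde g$ --- where $g\in W^{1+\alpha,q}$ is precisely the right regularity --- and the careful management of cutoff derivatives inside the monotonicity calculation are the other technical points requiring care.
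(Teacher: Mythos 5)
Your overall architecture (regularise with $\F_\eps$, prove a uniform fractional difference-quotient estimate using the exterior cone condition, convert to a Besov bound, embed and pass to the limit) is the same as the paper's, and the role you assign to the cone-restricted difference quotient, the extension of $g$, and the localisation \eqref{eq:besovlocalisation} all match Lemma~\ref{lem:apriori}. However, there is a genuine gap that prevents the argument from closing.

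You invoke ``the uniform $L^q$-bound on $Du_\eps$ from Lemma~\ref{lem:improvedLowerBound}.'' No such bound exists. What Lemma~\ref{lem:improvedLowerBound}, fed into the testing argument of Lemma~\ref{lem:relaxedEuler}, actually delivers is
\begin{align*}
\int_\Omega |Du_\eps|^p \;+\; |\sigma_\eps|^{q'} \;+\; \eps\,|Du_\eps|^q \,\d x \;\lesssim\; 1+\|g\|_{W^{1,q}(\Omega)}^q+\|f\|_{L^{q'}(\Omega)}^{q'},
\end{align*}
i.e.\ only $\eps\|Du_\eps\|_{L^q(\Omega)}^q$ is controlled uniformly, not $\|Du_\eps\|_{L^q(\Omega)}^q$. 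This matters because your difference-quotient bound (and the paper's \eqref{clm:diffEstimate}) has $\|Du_\eps\|_{L^q(\Omega)}^q$ sitting on the right-hand side: the terms corresponding to the paper's $A_1$ and $A_2$ are naturally estimated against the $L^q$-norm of the gradient, and there is no way around this because $F_\eps$ has $q$-growth. Consequently the Besov estimate you reach is of the form $\|Du_\eps\|_{B^{s,p}_\infty(\Omega)}^p\lesssim\mathcal M+\|Du_\eps\|_{W^{1,q}(\Omega)}^q$, which is not a uniform bound without further work. The paper closes this loop with the interpolation $\|Dv_\eps\|_{L^q}\leq\|Dv_\eps\|_{L^p}^{1-\theta}\|Dv_\eps\|_{L^{np/(n-\beta)}}^{\theta}$ followed by Young's inequality to absorb $\|Dv_\eps\|_{L^{np/(n-\beta)}}^{\theta q}$ into the left-hand side; the absorption step requires $\theta q<p$, and unwinding $\theta$ this is precisely the constraint $q<\frac{(n+\beta)p}{n}$ with $\beta<\alpha$, i.e.\ $q<\frac{(n+\alpha)p}{n}$. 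Your sketch never performs this bootstrap, and as a result neither produces a genuine a priori estimate nor explains where the gap hypothesis enters. A secondary issue: your claimed power $|h|^{\alpha\min(p,2)}$ on the difference quotient is too optimistic; for $p\geq2$, Lemma~\ref{lem:Vfunctional} converts the $L^2$-bound on $V$-differences of size $|h|^\alpha$ into $\|\Delta_h Du_\eps\|_{L^p(\Omega)}^p\lesssim|h|^{\alpha}\mathcal M$, not $|h|^{2\alpha}\mathcal M$, which is consistent with the target $W^{1,\frac{np}{n-\beta}}$ for $\beta<\alpha$ rather than the stronger exponent your stated power would suggest.
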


The key to proving the theorem is an a-priori estimate for minimisers of the regularised functional:
\begin{lemma}\label{lem:apriori} Suppose the assumptions of Theorem \ref{thm:regularityRelaxed} hold.
Let $v_\e$ be the minimiser of $\F_\e(\cdot)$ in the class $W^{1,q}_g(\Omega)$. Then for any $0\leq \beta<\alpha$ there is $\gamma>0$ such that the estimate
\begin{align*}
\|v_\e\|_{W^{1,\frac{np}{n-\beta}}(\Omega)}\lesssim \left(1+\F(v_\e)+\|g\|_{W^{1+\alpha,q}(\Omega)}+\|f\|_{L^{q'}(\Omega)}\right)^\gamma 
\end{align*} 
holds, with the implicit constant independent of $\e$ and $\gamma$.
\end{lemma}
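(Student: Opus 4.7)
The plan is to derive a uniform-in-$\e$ Nikolski-type estimate on $V_{\mu,p}(Dv_\e)$ by applying a global difference quotient argument to the Euler-Lagrange system of $v_\e$, and then upgrade it to $L^{np/(n-\beta)}$-integrability of $Dv_\e$ via the Besov embedding of Theorem \ref{thm:embedding}. Running difference quotients up to $\p\Omega$ is the source of the main difficulty and will be handled by the technique of \cite{Savare1998}, which substitutes the uniform exterior cone condition \eqref{eq:uniformCone} for interior cutoffs. As $v_\e$ minimises $\F_\e$ in $W^{1,q}_g(\Omega)$, it solves
\[\int_\Omega \p_z F_\e(x,Dv_\e)\cdot D\phi\,dx=\int_\Omega f\cdot\phi\,dx\qquad\forall\phi\in W^{1,q}_0(\Omega),\]
and I first extend $F_\e$, $g$ and $v_\e$ to $\R^n$ preserving \eqref{def:bounds1}-\eqref{def:bounds3} using the constructions at the end of Section \ref{sec:lipschitzDomains} and Lemma \ref{lem:extension}.

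For $x\in\Omega$ and $h$ in the exterior cone $C_{\rho_0}(\theta_0,\pmb n(x))$, I would test the Euler-Lagrange equation with $\phi=\tau_{-h}(v_\e-g)-(v_\e-g)$ and pair it symmetrically with the shifted equation at $x+h$; the cone condition is precisely what guarantees admissibility of $\phi$ as an element of $W^{1,q}_0(\Omega)$. Applying \eqref{def:bounds1} and Lemma \ref{lem:Vfunctional} to the left-hand side produces a lower bound of the form $\int|V_{\mu,p}(Dv_\e(x+h))-V_{\mu,p}(Dv_\e(x))|^2\,dx$, while the right-hand side decomposes into (i) an $x$-mismatch bounded via \eqref{def:bounds3} and \eqref{eq:upperBoundDerivF} by $|h|^\alpha\int(1+|Dv_\e|)^q\,dx$, (ii) a boundary contribution $\int\p_z F_\e(\cdot,Dv_\e)\cdot D(\tau_h g-g)\,dx$ bounded via the Besov characterisation \eqref{eq:besovcharacterisation} applied to $g\in W^{1+\alpha,q}(\Omega)$, and (iii) a source term $\int f\cdot(\tau_h v_\e-v_\e)\,dx$ handled by H\"older. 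Young's inequality absorbs the quadratic $|D(\tau_h v_\e - v_\e)|^2$ pieces back into the left, and \eqref{eq:pnormlower} converts any surviving $\|Dv_\e\|_{L^p}^p$ into $\F(v_\e)$.

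Combining these ingredients yields
\[\int_{\Omega\cap\Omega_{-h}}|V_{\mu,p}(Dv_\e(x+h))-V_{\mu,p}(Dv_\e(x))|^2\,dx\lesssim|h|^\alpha\bigl(1+\F(v_\e)+\|g\|_{W^{1+\alpha,q}(\Omega)}^q+\|f\|_{L^{q'}(\Omega)}^{q'}\bigr)^{\gamma_0}\]
uniformly in $\e$ and in $h$ varying over the exterior cone. A finite cover of $\Omega$ tailored to the Lipschitz structure (Lemma \ref{lem:starshapedLipschitz}), together with the localisation principle \eqref{eq:besovlocalisation} and the cone-independence of the constants in \eqref{eq:besovcharacterisation}, promotes this to $V_{\mu,p}(Dv_\e)\in B^{\alpha/2,2}_\infty(\Omega)$ with a quantitative bound by the same right-hand side. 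Theorem \ref{thm:embedding} with $s=\alpha/2$, $p=2$ then gives $V_{\mu,p}(Dv_\e)\in L^{2n/(n-\alpha)-\delta}$ for every $\delta>0$, whence $|Dv_\e|^p\in L^{n/(n-\alpha)-\delta'}$ and so $Dv_\e\in L^{np/(n-\beta)}$ for every $\beta<\alpha$, with the stated polynomial dependence on the data.

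I anticipate the principal obstacle to be the globalisation step itself: the naive test function $\tau_h(v_\e-g)-(v_\e-g)$ is not in $W^{1,q}_0(\Omega)$ because $\tau_h$ displaces traces off $\p\Omega$, and only by restricting $h$ to the exterior cone and symmetrising the Euler-Lagrange equations at $x$ and $x+h$ does admissibility survive. A secondary issue is $\e$-uniformity of the constants: the regularisation $\e|z|^q$ generates its own difference-quotient terms, which have to be reabsorbed using only the $\F(v_\e)$-part of $\F_\e(v_\e)$ together with careful book-keeping of the factor $\e$ in \eqref{eq:approxFelliptic}-\eqref{eq:partialFupper}, so that the final bound does not degenerate as $\e\searrow 0$.
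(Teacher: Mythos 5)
Your overall strategy matches the paper's: a global difference-quotient argument in the spirit of Savar\'e to produce a Nikolski estimate, followed by a Besov embedding. There are, however, two genuine gaps.

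First, the admissibility of your test function. You propose to test the Euler--Lagrange system for $v_\e$ with $\tau_{-h}(v_\e-g)-(v_\e-g)$ and claim that restricting $h$ to the exterior cone and ``symmetrising'' the equations makes this admissible. It does not. The cone $\pmb n(x_0)$ in \eqref{eq:uniformCone} is a \emph{local} object: it only guarantees that translations in the cone direction do not pull mass from outside $\Omega$ near $x_0$, within a ball of fixed radius $\sim\rho_0$. Globally, the naive translate of a $W^{1,q}_0(\Omega)$-function has non-vanishing trace on $\p\Omega$, and you cannot repair this by shifting the equation: if a competitor is not in $W^{1,q}_0(\Omega)$, neither version of the equation can be tested against it. What is missing is the cut-off ternary translate $T_h\tilde v=\phi\tilde v_h+(1-\phi)\tilde v$ (with $\phi$ supported in $B_{2\rho_0}(x_0)$ and $\tilde v$ the zero extension), which is precisely the device of Savar\'e that the paper implements: the exterior cone condition at $x_0$ then ensures $T_h\tilde v_\e\in W^{1,q}_0(\Omega)$, and the paper argues via the energy difference $\F_\e(T_h\tilde v_\e+g)-\F_\e(v_\e+g)$ (minimality plus strong convexity) rather than by testing the system with a raw difference quotient. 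Without the cut-off the argument does not start.

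Second, the bootstrap that removes $\|Dv_\e\|_{L^q}$ from the right-hand side is absent. The $x$-H\"older mismatch in \eqref{eq:approxFholder} and the $q$-growth bound \eqref{eq:approxFupper} inevitably produce a factor $(1+\|Dv_\e\|_{L^q(\Omega)}^q+\cdots)$ on the right of the Nikolski estimate, not $\F(v_\e)$; the passage \eqref{eq:pnormlower} only controls $\|Dv_\e\|_{L^p}^p$ by $\F(v_\e)$, and $p<q$. Your stated intermediate estimate, already phrased in terms of $\F(v_\e)$ alone, silently presupposes this has been resolved. The paper closes the gap by the interpolation
\[
\|v_\e\|_{W^{1,q}(\Omega)}\le\|v_\e\|_{W^{1,p}(\Omega)}^{1-\theta}\,\|v_\e\|_{W^{1,\frac{np}{n-\beta}}(\Omega)}^{\theta},\qquad \theta=\tfrac{np}{\beta}\left(\tfrac1p-\tfrac1q\right),
\]
together with the arithmetic $q\theta<p$, which is exactly where the hypothesis $q<\frac{(n+\alpha)p}{n}$ (through the choice $\beta$ with $q<\frac{(n+\beta)p}{n}$) enters. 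Young's inequality then reabsorbs the $W^{1,\frac{np}{n-\beta}}$-norm into the left. This interpolation step is the heart of the a priori estimate and cannot be omitted. Your endgame (passing through $V_{\mu,p}(Dv_\e)\in B^{\alpha/2,2}_\infty(\Omega)$ and embedding) is in itself a nice uniform treatment of $p\ge2$ and $p<2$, but it still produces a right-hand side with $\|Dv_\e\|_{L^q}^q$ that requires this interpolation to close.
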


Theorem \ref{thm:regularityRelaxed} is now a consequence of combining Lemma \ref{lem:apriori} and Lemma \ref{lem:convApproximate}.
\begin{proof}[Proof of Theorem \ref{thm:regularityRelaxed}]
Let $v_\e\in W^{1,q}_g(\Omega)$ be the minimiser of $\F_\e(\cdot)$ in the class $W^{1,q}_g(\Omega)$. For $0\leq \beta<\alpha$ by Lemma \ref{lem:apriori}, $\{\,v_\e\,\}$ is uniformly bounded in $W^{1,\frac{np}{n-\beta}}(\Omega)$. From Lemma \ref{lem:convApproximate} we know that $\F_\e(v_\e)\to \overline \F(u)$ and up to passing to a subsequence $v_\e\to u$ in $W^{1,p}(\Omega)$. Thus we may extract a non-relabelled subsequence such that $v_\e\rightharpoonup u$ weakly in $W^{1,\frac{np}{n-\beta}}(\Omega)$. This allows to pass to the limit in Lemma \ref{lem:apriori} to conclude.
\end{proof}
We proceed to prove Lemma \ref{lem:apriori}:
\begin{proof}[Proof of Lemma \ref{lem:apriori}]
By direct methods and strict convexity deriving from (\ref{eq:approxFelliptic}) we obtain a sequence of minimisers $v_\e\in W^{1,q}_0(\Omega)$ that solve
\begin{align*}
\min_{v\in W^{1,q}_0(\Omega)} \F_\e(v+g)
\end{align*}
Moreover $v_\e$ satisfies the Euler-Lagrange equation
\begin{align*}
\int_\Omega \left(\partial_z F_\e(x,Dv_\e+Dg)\right)\cdot D\phi-f\phi\d x=0 \qquad\forall\phi\in W^{1,q}_0(\Omega).
\end{align*} In particular, for any $v\in W^{1,q}_0(\Omega)$,
\begin{align*}
\int_\Omega \left(\partial_z F_\e(x,Dv_\e+Dg)\right)\cdot D(v-v_\e)-f\cdot(v-v_\e)=0.
\end{align*}
Using this identity, we see for $v\in W^{1,q}_0(\Omega)$,
\begin{align}\label{eq:lowerEstimate}
\F_\e(v+g)-\F_\e(v_\e+g)=&\int_\Omega F_\e(x,Dv+Dg)-F_\e(x,Dv_\e+Dg)-f\cdot(v-v_\e)\d x\nonumber\\
\qquad& -\int_\Omega \left(\partial_z F_\e(x,Dv_\e+Dg)\right)\cdot D(v-v_\e)-f\cdot(v-v_\e) \d x\nonumber\\
\gtrsim&\|V_{\mu,p}(Dv+Dg)-V_{\mu,p}(Dv_\e+Dg)\|_{L^2(\Omega)}^2,
\end{align}
where to obtain the last line we have used \eqref{eq:approxFelliptic} and   Lemma \ref{lem:Vfunctional}.

Let $\rho_0>0$ and $\pmb n\colon \R^n\to S^{n-1}$ be so that the uniform cone property \eqref{eq:uniformCone} holds. Possibly reducing $\rho_0$ assume without loss of generality that $\Omega+B_{3\rho_0}(x)\subset B(0,R)$ for all $x\in \Omega$. Here $B(0,R)\Supset\Omega$ is the ball defined in Section \ref{sec:lipschitzDomains}.
Given ${x_0\in\Omega}$, let $0\leq {\phi=\phi_{x_0,\rho_0}}\leq 1$ be a smooth cut-off supported in $B_{2\rho_0}(x_0)$ with $\phi(x)=1$ in $B_{\rho_0}(x_0)$ and ${|D^k\phi(x)|\leq C_k {\rho_0}^{-k}}$  for some $C_k>0$ and $k\in N$.
Given functions $v_1,v_2$ defined on $\R^n$ and $h\in\R^n$ introduce
\begin{align*}
T_h v_1 = \phi v_{1,h}+(1-\phi)v_1
\end{align*}
and denote $\Delta_h v_1= v_{1,h}-v_1$. If $v_1,v_2\colon \R^n\to \R^{n\times m}$ then write 
$$\tilde\Delta_h F_\e(x,v_1,v_2)=F_\e(x, v_{1,h}+v_2)-F_\e(x,v_1+v_2).$$

We claim that for every $x_0\in \R^n$ there is a constant $C=C(n,\rho_0,\Lambda,\Omega)$ such that for all $v\in W^{1,q}_0(\Omega)$
\begin{align}\label{clm:diffEstimate}
\sup_{h\in C_{\rho_0}(\theta_0,\pmb n(x_0))}\frac{\F_\e(T_h \tilde v +g)-\F_\e(v+g)}{|h|^\alpha}\leq C\left(1+\|Dv\|_{L^q(\Omega)}^q + \|g\|_{W^{1+\alpha,q}(\Omega)}^q+\|f\|_{L^{q'}(\Omega)}^{q'}\right)
\end{align}
Here $\tilde v$ is the extension by $0$ to $\R^n$ of $v$.

Let $v\in W^{1,q}_0(\Omega)$ and $h\in C_{\rho_0}(\theta_0,\pmb n(x_0))$.
Let $\tilde v$ be the extension of $v$ by $0$ to a function on $\R^n$.
We estimate for $h\in C_{\rho_0}(\theta_0,\pmb n(x_0))$,
\begin{align*}
&\F_\e(T_h \tilde v+g)-\F_\e(v+g)\\
=&\int_\Omega F_\e(x,T_h D\tilde v +D\phi(\tilde v_h-v)+Dg)-F_\e(x,T_h D\tilde v+Dg)\d x\\
&\qquad+\int_\Omega F_\e(x,T_h D\tilde v+Dg)-F_\e(x,D\tilde v+Dg)\d x-\int_\Omega f\cdot(T_h\tilde v-v)\d x\\
=& A_1 + A_2+ A_3.
\end{align*}
Using (\ref{eq:approxFupper}), H\"older's inequality and recalling that $\tilde g$ denotes a $W^{1+\alpha,q}$-extension of $g$ to $\R^n$, we find
\begin{align*}
|A_1|\leq& (\Lambda +\e)\int_\Omega |D\phi\Delta_h \tilde v|\big(1+|T_h D\tilde v+Dg|^2+|D\phi\Delta_h\tilde v|^2\big)^\frac{q-1}{2}\d x\\
\leq& \|D\phi\Delta_h\tilde v\|_{L^q(B_{2\rho_0}(x_0))}\big(1+\|T_hD\tilde v\|_{L^q(B_{2\rho_0}(x_0))}\\
&\qquad+\|D\phi\Delta_h\tilde v\|_{L^q(B_{2\rho_0}(x_0))}+\|D\tilde g\|_{L^q(B_{2\rho_0}(x_0))}^q\big)^{q-1}.
\end{align*}
Recalling that as $\tilde v\in W^{1,q}_0(\Omega,\R^m)$,
\begin{align*}
\|D\phi(\tilde v_h-\tilde v)\|_{L^q(B_{2\rho_0}(x_0))}\lesssim |h| \|D\tilde v\|_{L^q(B_{2\rho_0}(x_0))},
\end{align*}
we conclude using (\ref{eq:Gextension}) and the definition of $T_h$,
\begin{align*}
|A_1|\lesssim |h|\big(1+\|Dv\|_{L^q(\Omega)}^q+\|g\|_{W^{1+\alpha,q}(\Omega)}^q\big).
\end{align*}
We now turn to $A_2$. By convexity of $F_\e(\cdot)$,
\begin{align*}
\tilde\Delta_h F_\e(x,D\tilde v)
\leq& (1-\phi)F_\e(x,Dv+Dg)+\phi F_\e(x,D\tilde v_h+Dg)-F_\e(Dv+Dg)\\[4pt]
 =& \phi\tilde\Delta_h F_\e(x,D\tilde v,D\tilde g).
\end{align*}
In particular, using a change of coordinates and since in $B_{3\rho}\setminus \Omega$, $D\tilde v_h=D\tilde v=0$,
\begin{align*}
|A_2| \leq& \int_{B_{2\rho_0}(x_0)} \phi \tilde\Delta_h F_\e(x,D\tilde v+Dg)\d x\\
=& \int_{B_{2\rho_0}(x_0)+h}\phi(x-h)F_\e(x-h,D\tilde v+D\tilde g_{-h})\d x-\int_{B_{2\rho_0}(x_0)}\phi(x)F_\e(x,D\tilde v+D\tilde g)\d x\\
=& \int_{B_{3\rho_0}(x_0)}\Delta_{-h}\phi F_\e(x-h,D\tilde v+D\tilde g_{-h})+\phi(x)\tilde\Delta_{-h}F_\e(x-h,D\tilde g+ D\tilde v)\d x\\
&\qquad\qquad+\int_{B_{2\rho_0}(x_0)}\phi(x)\big(F_\e(x-h,D\tilde v+D\tilde g)-F_\e(x,D\tilde v+D\tilde g)\big)\d x.
\end{align*}

Using (\ref{eq:approxFupper}), \eqref{eq:approxFholder}, the regularity of $\phi$ and $g$ as well as (\ref{eq:Gextension}) to estimate each term in turn, we conclude
\begin{align*}
A_2\lesssim& |h|^\alpha\left(1+\|Dv\|_{L^q(\Omega)}^q+\|g\|_{W^{1+\alpha,q}(\Omega)}^q\right).
\end{align*}
Finally, using Young's inequality and Sobolev embedding,
\begin{align*}
|A_3|\leq& \int_\Omega |f\cdot(\phi\Delta_h v)|\lesssim \|f\|_{L^{q'}(\Omega)}\|v-v_h\|_{L^q(\Omega)}
\lesssim h\left(\|f\|_{L^{q'}(\Omega)}^{q'}+\|Dv\|_{L^q(\Omega)}^q\right)
\end{align*}
This proves the claim.

We are now ready to finish the proof. Note that $T_h\tilde v_\e\in W^{1,q}_0(\Omega)$ if $h\in C_{\rho_0}(\theta_0,\pmb n(x_0))$. Thus combining \eqref{eq:lowerEstimate} and \eqref{clm:diffEstimate} with the choice $v=T_h \tilde v_\e$ we find for $x_0\in \R^n$,
\begin{align*}
&\sup_{h\in C_{\rho_0}(\theta_0,\pmb n(x_0))}|h|^{-\alpha}\|V_{\mu,p}(Dv_\e+Dg)-V_{\mu,p}(D T_h\tilde v_{\e,h}+Dg)\|_{L^2(B_{\rho_0}(x_0))}^2\\
\lesssim& 1+\| Dw\|_{L^q(\Omega)}^q+\|g\|_{W^{1+\alpha,q}(\Omega)}^q+\|f\|_{L^{q'}(\Omega)}^{q'}\nonumber.
\end{align*}
In particular, we deduce after recalling the definition of $T_h$, using the triangle inequality and regularity of $\tilde g$,
\begin{align}\label{eq:basicBesov}
&\sup_{h\in C_{\rho_0}(\theta_0,\pmb n(x_0))}|h|^{-\alpha}\|D(\tilde v_{\e,h}+\tilde g_h)-D(v_\e+g)\|_{L^p(B_{\rho_0}(x_0)}^p\\
\lesssim& 1+\| Dw\|_{L^q(\Omega)}^q+\|g\|_{W^{1+\alpha,q}(\Omega)}^q+\|f\|_{L^{q'}(\Omega)}^{q'}\nonumber.
\end{align}
Using the characterisation of Besov spaces (\ref{eq:besovcharacterisation}), we conclude for all $x_0\in \R^n$,
\begin{align}
[Dv_\e]_{\frac \alpha p,p,B_{\rho_0}(x_0)}^p\lesssim \left(1+\|Dv_\e\|_{L^q(\Omega)}^q+\|g\|_{W^{1+\alpha,q}(\Omega)}^q+\|f\|_{L^{q'}(\Omega)}^{q'}\right)
\end{align}
Covering $\Omega$ by a finite number of balls of radius $\rho_0$ and using (\ref{eq:besovlocalisation}), we conclude
\begin{align}\label{eq:besovestimate2}
\|v_\e\|_{B^{1+\frac \alpha p,p}_\infty(\Omega)}^p\lesssim \left(1+\|v_\e\|_{W^{1,q}(\Omega)}^q+\|g\|_{W^{1+\alpha,q}(\Omega)}^q+\|f\|_{L^{q'}(\Omega)}^{q'}\right).
\end{align}
Recall now that $B^{1+\frac \alpha p,p}_\infty(\Omega)$ embeds continuously into $W^{1,\frac{np}{n-\beta}}(\Omega)$ for any $\beta <\alpha$ by Theorem \ref{thm:embedding}. Choose $\beta$ such that $q< \frac{p(\beta+n)}{n}$ and use interpolation with $\theta = \frac{np}{\beta}\left(\frac 1 p-\frac 1 q\right)$ to see
\begin{align}\label{eq:interpolation}
\|v_\e\|_{W^{1,q}(\Omega)}\leq \|v_\e\|_{W^{1,p}(\Omega)}^{1-\theta} \|v_\e\|_{W^{1,\frac{np}{n-\beta}}(\Omega)}^{\theta}
\end{align}
As, $q<\frac{(n+\beta)p}{n}$, it follows that $q\theta<p$. Thus using \eqref{eq:interpolation} in \eqref{eq:besovestimate2}, we find after using Young's inequality,
\begin{align*}
\|v_\e\|_{W^{1,\frac{np}{n-\beta}}(\Omega)}^p\lesssim 1+\frac 1 2\|v_\e\|_{W^{1,\frac{np}{n-\beta}}(\Omega)}^p+C(\theta)\|v_\e\|_{W^{1,p}(\Omega)}^\frac{\theta q}{(\theta q-p)}+\|g\|_{W^{1+\alpha,q}(\Omega)}^q+\|f\|_{L^{q'}(\Omega)}^{q'}.
\end{align*}
Re-arranging and recalling \eqref{eq:pnormlower} we obtain the desired conclusion.

Assume now $p<2$. In this case \eqref{eq:basicBesov} does not hold in this form any more. Instead arguing as before up to this point and using H\"older, we find
\begin{align*}
&\|Dv_\e+Dg-D\tilde v_{\e,h}-D_h \tilde g\|_{L^p(\Omega)}^2 \left(\int_\Omega |Dv_\e|^p+|D\tilde v_{\e,h}|^p+|Dg|^p+|D\tilde g_h|^p\d x\right)^\frac{p-2}{p}\\
\lesssim& 1+\| Dw\|_{L^q(\Omega)}^q+\|g\|_{W^{1+\alpha,q}(\Omega)}^q+\|f\|_{L^{q'}(\Omega)}^{q'}
\end{align*}
In particular, proceeding as before we obtain
\begin{align*}
\|v_\e\|_{B^{1+\frac \alpha 2}_\infty(\Omega)}^2\lesssim& \big(1+\|v_\e\|_{W^{1,q}(\Omega)}^q+\|g\|_{W^{1+\alpha,q}(\Omega)}^q+\|f\|_{L^{q'}(\Omega)}^{q'}\big)\\
&\qquad \times\big(\|v_\e\|_{W^{1,p}(\Omega)}+\|g\|_{W^{1,p}(\Omega)}\big)^\frac{2-p}{p}.
\end{align*}
It is now straightforward to check that applying the interpolation argument from the case $p\geq 2$ gives the desired conclusion.
\end{proof}

\begin{remark}
In the case of autonomous functionals $F(x,z)=F(z)$ the results of this section may be strengthened as follows. Using techniques from interior regularity arguments, the Besov-regularity in tangential directions may be improved when compared to the Besov-regularity obtained in the argument above. Combining this improvement with an embedding of anisotropic Besov spaces into Lebesgue spaces, $\frac{np}{n-\beta}$ may be replaced by a larger value in \eqref{eq:interpolation}. This in turn will imply that larger values of $q$ are allowed in the argument. We intend to return to this observation in further work.
\end{remark}

\section{Regularisation and pointwise minimisers}\label{sec:WB}
 Throughout this section we set $\Theta = 1+n\left(\frac 1 q-\frac 1 p\right)$ if $p< n$, $\Theta = \frac n q$ if $p\geq n$.

The aim of this section is to prove Theorem \ref{thm:nonautonReg} and Proposition \ref{cor:lavrentiev}.
In order to do so we require a number of technical lemmas, which we collect in the following subsection.

\subsection{Technical lemmas}
We begin with an observation already noted in \cite[Theorem 3.1]{Esposito2019}. For completeness we give the proof.
\begin{lemma}\label{lem:changeOfXAlt}
Assume $1<p\leq q\leq \frac{(n+\alpha)p}{n}$. Let $\Omega$ be a domain and $u\in W^{1,p}(\Omega)$. Suppose $F(x,\cdot)$ satisfies \eqref{def:bounds1}-\eqref{def:bounds3} and \eqref{def:changeOfXAlt}. 
Then for $x\in\Omega$ and $\e\leq \min(\e_0,d(x,\p\Omega))$,
\begin{align*}
F(x,Du(\cdot)\star \phi_\e(x))\lesssim 1 + \big(F(\cdot,Du(\cdot))\star \phi_\e\big)(x).
\end{align*}
\end{lemma}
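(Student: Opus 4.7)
The strategy is to combine \eqref{def:changeOfXAlt} (which furnishes a pointwise lower envelope of $F$ over a small ball) with convexity of $F(y,\cdot)$ in $z$ (via Jensen's inequality) and the $\alpha$-H\"older continuity in $x$ from \eqref{def:bounds3}. Write $v=(Du\star\phi_\e)(x)$. The condition $\e\leq d(x,\p\Omega)$ ensures $\overline{B_\e(x)}\subset\Omega$, and since $\e\leq\e_0$, assumption \eqref{def:changeOfXAlt} applied at $x$ with radius $\e$ delivers a point $\hat y\in\overline{B_\e(x)}$ such that $F(\hat y,z)\leq F(y,z)$ for every $y\in\overline{B_\e(x)}$ and every $z\in\R^{n\times m}$.

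I would first estimate $F(\hat y,v)$. Since $\phi_\e(x-\cdot)$ is a probability density supported in $B_\e(x)$ and $F(\hat y,\cdot)$ is convex, Jensen's inequality yields
\[
F(\hat y,v)\leq \int_{B_\e(x)} F(\hat y,Du(y))\phi_\e(x-y)\,dy\leq \int_{B_\e(x)} F(y,Du(y))\phi_\e(x-y)\,dy=(F(\cdot,Du)\star\phi_\e)(x),
\]
where the second inequality uses the extremal property of $\hat y$ pointwise in $y$. Transferring back from $\hat y$ to $x$ via \eqref{def:bounds3} and $|x-\hat y|\leq\e$ then gives
\[
F(x,v)\leq F(\hat y,v)+\Lambda\e^\alpha(1+|v|^2)^{q/2}\leq (F(\cdot,Du)\star\phi_\e)(x)+\Lambda\e^\alpha(1+|v|^2)^{q/2}.
\]

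It remains to absorb the error $\e^\alpha(1+|v|^2)^{q/2}$ into $1+(F(\cdot,Du)\star\phi_\e)(x)$. Jensen applied to the convex function $|\cdot|^p$, together with the coercivity \eqref{eq:pnormlower}, yields
\[
|v|^p\leq (|Du|^p\star\phi_\e)(x)\lesssim 1+(F(\cdot,Du)\star\phi_\e)(x).
\]
To handle the extra factor $(1+|v|^2)^{(q-p)/2}$ I would invoke the gap restriction $q\leq\frac{(n+\alpha)p}{n}$, which is equivalent to $\alpha\geq \frac{n(q-p)}{p}$: this is exactly what ensures that the prefactor $\e^\alpha$ compensates the blow-up in the crude pointwise bound $|v|\lesssim \e^{-n/p}\|Du\|_{L^p(B_\e(x))}$, giving $\e^\alpha |v|^{q-p}\lesssim \e^{\alpha-n(q-p)/p}\|Du\|_{L^p(B_\e(x))}^{q-p}$ with non-negative $\e$-exponent.

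The main obstacle is this final absorption step: the pointwise bound on $|v|^{q-p}$ brings in the unweighted norm $\|Du\|_{L^p(B_\e(x))}$, whereas the right-hand side of the target inequality only supplies the $\phi_\e$-weighted average $(F(\cdot,Du)\star\phi_\e)(x)$. This is reconciled by choosing a standard mollifier bounded below by a constant multiple of $\e^{-n}$ on a ball comparable to $B_\e(x)$, so that the unweighted and weighted integrals differ by a bounded multiplicative factor; combined with $\e^{\alpha-n(q-p)/p}\leq \e_0^{\alpha-n(q-p)/p}$ and the coercivity \eqref{eq:pnormlower}, this closes the estimate.
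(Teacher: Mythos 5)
Your proof is correct and follows essentially the same route as the paper: invoke \eqref{def:changeOfXAlt} to obtain the extremal point $\hat y$, apply Jensen's inequality for $F(\hat y,\cdot)$ to pass to the mollified average, transfer from $\hat y$ to $x$ via the $\alpha$-H\"older bound at the cost $\Lambda\e^\alpha(1+|v|^2)^{q/2}$, and absorb this error using $|v|\lesssim\e^{-n/p}\|Du\|_{L^p}$, the coercivity \eqref{eq:pnormlower}, and the gap condition $q\le\frac{(n+\alpha)p}{n}$. The paper packages the absorption as a preliminary "claim" that $F(x,z)\lesssim 1+\min_{y\in\overline{B_\e(x)\cap\Omega}}F(y,z)$ for $|z|\le C\e^{-n/p}$ and applies Jensen afterwards, while you apply Jensen first and absorb the error against the right-hand side; this is a cosmetic reordering. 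One small remark: your final paragraph identifies an "obstacle" that is not actually present. Since the implicit constant is allowed to depend on $\|Du\|_{L^p(\Omega)}$ (as it does in the paper's proof, where the claim constant depends on the threshold $C$ taken equal to $\|Du\|_{L^p(\Omega)}$), you may simply bound $\|Du\|_{L^p(B_\e(x))}\le\|Du\|_{L^p(\Omega)}$ and then absorb the remaining factor $|v|^p$ via Jensen into $(|Du|^p\star\phi_\e)(x)\lesssim 1+(F(\cdot,Du)\star\phi_\e)(x)$; no lower bound on the mollifier is required.
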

\begin{proof}
Let $C>0$ be an arbitrary fixed constant.
We claim that if $|z|\leq C \e^{-\frac n p}$ and $x\in \overline{B_{\e}(x_0)\cap\Omega}$, then
\begin{align}\label{eq:claim}
F(x,z) \lesssim 1+ \min_{y\in \overline{B_\e(x)\cap \Omega}} F(y,z).
\end{align}
Set $G_\e(x,z)=\min_{y\in \overline{B_\e(x)\cap\Omega}} F(x,z)$.
By \eqref{def:bounds3}, $G_\e(x,z)\geq F(x,z)-\Lambda\e^\alpha (1+|z|^2)^\frac q 2$. Hence for $\delta\in(0,1)$, $\e\in(0,\e_0)$ and $|z|\leq C\e^{-\frac n p}$, using \eqref{eq:pnormlower},
\begin{align*}
G_\e(x,z)\geq &\delta  F(x,z)-\delta\Lambda (1+|z|^2)^\frac q 2+(1-\delta) |z|^p\\
\gtrsim&  \delta F(x,z)-\delta\Lambda C^{q-p}\e^{\alpha+\frac{n(p-q)} p}|z|^p+(1-\delta)|z|^p-\delta(\Lambda\e^\alpha+1)\nonumber\\
\geq& \delta F(x,z)+\big(1-\delta-\Lambda C^{q-p}\delta \e_0^{\alpha+\frac{n(p-q)} p}\big)|z|^p-\Lambda(\e_0^\alpha+1).\nonumber
\end{align*}
The last line relies on $q\leq \frac{(n+\alpha)p}{n}$. Choosing $\delta$ sufficiently small and re-arranging \eqref{eq:claim} follows. 

Note that by Lemma \ref{lem:mollifier}
\begin{align}\label{eq:LinftyBound2}
|Du\star \phi_\e|\leq \|Du\|_{L^p(\Omega)} \e^{-\frac n p}.
\end{align}
Using Jensen's inequality we may estimate for $\e\in(0,\e_0)$ with some fixed $\hat y\in B_\e(x)$,
\begin{align*}
G_\e(x,Du\star \phi_\e)=& F(\hat y, Du\star \phi_\e)\leq \int_{B_1} F(\hat y, Du(y))\phi_\e(x-y)\d y\\
\leq& (F(\cdot,Du(\cdot))\star \phi_\e)(x)
\end{align*}
To obtain the second inequality \eqref{def:changeOfXAlt} was used.
In particular, using \eqref{eq:claim} we conclude
\begin{align*}
F(x,Du\star \phi_\e)\lesssim 1 + \big(F(\cdot,Du(\cdot))\star \phi_\e\big)(x).
\end{align*}
\end{proof}

Partitions of unity feature heavily in our approximation approach. The next lemma studies the behaviour of $F(x,z)$ when $z$ is a sum built using a partition of unity.
\begin{lemma}\label{lem:sumunity}
Let $\Omega$ be a domain with $\Omega=\bigcup_{i\in I} \Omega_i$, where $\Omega_i\subset\Omega$ is open relative to $\Omega$.
Suppose $1<p\leq q<\frac{(n+1)p}{n}$ and suppose $\{\psi_i\,\}_{i\in I}$ is a partition of unity subordinate  to $\{\,\Omega_i\,\}$ and such that 
\begin{align}\label{eq:partitionIntersection}
\|D\psi_i\|_{L^\infty(\Omega_i)}\sim \|D\psi_j\|_{L^\infty(\Omega_j)} \qquad \text{ if }\supp \psi_i\cap \supp \psi_j\neq\emptyset.
\end{align}
 Assume further that there is $M>0$ such that for every $x\in\Omega$, \begin{align}\label{eq:uniformFinite}
{\#\{i\in I\colon x\in\supp \psi_i\}\leq M}.
\end{align}
Take $m'\geq 1$ such that $m'\big(\Theta -\frac{n(q-1)}{p}\left(1-\frac p q\right)\big)\geq 1$.

Suppose $F(x,z)$ satisfies \eqref{def:bounds1} and \eqref{def:bounds2}.
Let $u\in W^{1,p}(\Omega)$. Suppose that there are functions $u_\e^i\in W^{1,p}(\Omega_i)$, $\e_1>0$ and $C=C(\|u\|_{W^{1,p}(\Omega)})$ such that for each $i\in I$, $\e\in(0,\e_1)$,
\begin{enumerate}
\item $u_\e^i\to u$ in $W^{1,p}(\Omega_i)$ as $\e\to 0$ and 
\begin{align}\label{eq:uebounds}
\|u_\e^i-u\|_{L^q(\Omega_i)} \lesssim& C\|D\psi_i\|_{L^\infty(\Omega)}^{-m\Theta}\e^\Theta\\
\|Du_\e^i\|_{L^q(\Omega_i)} \lesssim& C(\e\|D\psi_i\|_{L^\infty(\Omega)}^{-m})^{-\frac n p \left(1-\frac p q\right)}. \nonumber
\end{align}
\end{enumerate}
for some $m\geq m'$.
Then $u_\e= \sum_{i\in I} u_\e^i \psi_i\to u$ in $W^{1,p}(\Omega)$ as $\e\to 0$ and
\begin{align*}
\int_\Omega F(x,Du_\e)\d x\lesssim c\big(\|u\|_{W^{1,p}(\Omega)}\big)+\int_\Omega F\big(x,\sum_{i\in I} Du_\e^i\psi_i\big)\d x.
\end{align*}
\end{lemma}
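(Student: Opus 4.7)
The strategy is to apply the Leibniz rule and exploit the identity $\sum_i D\psi_i = 0$ (which follows from $\sum_i \psi_i \equiv 1$) to split $Du_\e = A + B$, where $A = \sum_i \psi_i Du_\e^i$ is the principal term and $B = \sum_i (u_\e^i - u) D\psi_i$ is an error purely controlled by the approximation defect. The key point is that subtracting $u$ from each $u_\e^i$ inside the sum defining $B$ is for free, precisely because $\sum_i D\psi_i$ vanishes pointwise.

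For the $W^{1,p}$-convergence one writes $Du_\e - Du = \sum_i \psi_i(Du_\e^i - Du) + B$. The first piece tends to zero in $L^p(\Omega)$ by the strong convergence $u_\e^i \to u$ in each $W^{1,p}(\Omega_i)$ together with \eqref{eq:uniformFinite}. For $B$, H\"older and the finite overlap yield
\[\|B\|_{L^p(\Omega)}^p \leq M^{p-1} \sum_i \|u_\e^i - u\|_{L^p(\Omega_i)}^p \|D\psi_i\|_{L^\infty}^p.\]
Downgrading the hypothesized $L^q$-bound on $u_\e^i - u$ to $L^p$ via H\"older, combined with the local summation $\sum_i \|Du\|_{L^p(\tilde K_i)}^p \lesssim \|Du\|_{L^p(\Omega)}^p$ (from finite overlap of slight enlargements $\tilde K_i$ of $\supp \psi_i$) and the uniform boundedness of $d_i^{p(m\Theta - 1)}$ (which holds since $m\Theta \geq 1$ under the hypothesis on $m'$), one concludes $\|B\|_{L^p(\Omega)} \to 0$ and hence $u_\e \to u$ in $W^{1,p}(\Omega)$.

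For the energy estimate, the convexity subdifferential inequality gives $F(x, A+B) \leq F(x, A) + \partial_z F(x, A+B) \cdot B$; integrating and applying H\"older together with the growth $|\partial_z F(x, z)| \lesssim (1+|z|^2)^{(q-1)/2}$ produces
\[\int_\Omega [F(x, Du_\e) - F(x, A)]\,\d x \lesssim \bigl(1 + \|A\|_{L^q(\Omega)} + \|B\|_{L^q(\Omega)}\bigr)^{q-1} \|B\|_{L^q(\Omega)}.\]
The analogous local-summation argument yields $\|B\|_{L^q(\Omega)} \lesssim \e^\Theta \|u\|_{W^{1,p}(\Omega)}$. The bound on $\|A\|_{L^q(\Omega)}$ alone is not uniform in $\e$, but the specific combination $\|A\|_{L^q}^{q-1} \|B\|_{L^q}$ carries a net $\e$-factor of $\e^{1 - n(q-p)/p}$, which is nonnegative exactly under the gap condition $q \leq (n+1)p/n$. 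The remaining sum contributions from the partition stay uniformly controlled precisely because the hypothesis $m'(\Theta - \tfrac{n(q-1)}{p}(1-p/q)) \geq 1$ is (after simplification) equivalent to the balance $m(1 - n(q-p)/p) \geq 1$ needed to match the scalings.

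The main obstacle is this last compound estimate: individually, $\|A\|_{L^q}$ and the naive sum $\sum_i \|u_\e^i - u\|_{L^q(\Omega_i)}^q \|D\psi_i\|_{L^\infty}^q$ both diverge in general as $\e \to 0$, and only the precise product—tracked together with the Whitney-type scaling $\|D\psi_i\|_{L^\infty} \sim d_i^{-1}$ dictated by the mollifier scales $\rho_i \sim \e\|D\psi_i\|^{-m}$—survives the limit and produces a constant depending only on $\|u\|_{W^{1,p}(\Omega)}$.
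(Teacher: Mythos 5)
Your decomposition $Du_\e=A+B$ with $A=\sum_i Du_\e^i\psi_i$ and $B=\sum_i(u_\e^i-u)\otimes D\psi_i$ (made possible by $\sum_i D\psi_i=0$) is exactly the one the paper uses, and your $W^{1,p}$-convergence argument is sound. The gap is in the energy estimate: after the convexity inequality you apply \emph{global} H\"older to obtain
\[
\int_\Omega \bigl[F(x,Du_\e)-F(x,A)\bigr]\,\d x \lesssim \bigl(1+\|A\|_{L^q(\Omega)}+\|B\|_{L^q(\Omega)}\bigr)^{q-1}\|B\|_{L^q(\Omega)},
\]
and you then read off a ``net $\e$-factor'' of $\e^\tau$ from the product. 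But $\|A\|_{L^q(\Omega)}$ is not merely non-uniform in $\e$ --- it is generically \emph{infinite} for each fixed $\e$. In the situation the lemma is designed for ($\{\Omega_i\}$ a Whitney--Besicovitch cover with infinitely many cubes, as in Lemma~\ref{lem:ue}), the $u_\e$ constructed lie in $W^{1,q}_{\tp{loc}}(\Omega)$ but \emph{not} in $W^{1,q}(\Omega)$; equivalently, $\sum_i\|Du_\e^i\|_{L^q(\Omega_i)}^q\sim\sum_i(\e\|D\psi_i\|^{-m})^{-n(q-p)/p}$ diverges because the cube scales $\|D\psi_i\|^{-1}$ go to zero near $\partial\Omega$. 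Thus your bound is of the form $\infty\cdot\|B\|_{L^q}$, which carries no information, and the heuristic ``$\e^\tau$ survives'' does not rescue it.

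The correct estimate must be carried out term-by-term rather than after globalising: using the finite overlap \eqref{eq:uniformFinite} one bounds $|A|^{q-1}\lesssim\sum_j|Du_\e^j|^{q-1}\mathbf{1}_{\Omega_j}$ pointwise and applies H\"older \emph{locally} on each $\Omega_j$, obtaining
\[
\int_\Omega |A|^{q-1}|B|\,\d x\lesssim\sum_{j\in I}\|Du_\e^j\|_{L^q(\Omega_j)}^{q-1}\|B\|_{L^q(\Omega_j)}.
\]
It is only in this local product that the favourable correlation you allude to (``where $B$ is small the $A$ is large'') can be exploited: on a cube $\Omega_j$ near the boundary, $\|Du_\e^j\|_{L^q(\Omega_j)}^{q-1}$ blows up at rate $\|D\psi_j\|^{m\frac{n(q-1)}{p}(1-p/q)}$, but $\|B\|_{L^q(\Omega_j)}$ decays at the compensating rate $\|D\psi_j\|^{-(m\Theta-1)}$, and the net local contribution is $\e^\tau\|D\psi_j\|^{-(m\tau-1)}$ with the crucial exponent $\tau=\Theta-\tfrac{n(q-1)}{p}(1-\tfrac pq)$. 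The hypothesis $m'\tau\ge1$ (and $m\ge m'$) ensures $\|D\psi_j\|^{-(m\tau-1)}\leq 1$, so the series is controlled. The global H\"older throws this correlation away --- it is bounded below by the local sum only when the sequences are constant --- and therefore cannot recover the estimate. You need to write out the local sum explicitly; once you do, your remarks about the scaling $\|D\psi_i\|\sim 1/d_i$ and the balance condition on $m'$ become a correct proof rather than a gesture at one.
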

\begin{proof}
We remark that by the restriction on $q$, $m'$ is well-defined.
Let $\e\in(0,\e_0)$.  We first show that $u_\e\in W^{1,p}(\Omega)$. By Sobolev embedding, \eqref{eq:uniformFinite} and strong convergence of $u_\e^i$, we see that $u_\e\to u$ in  $L^p(\Omega)$. Thus we focus on
\[Du_\e=\sum_{i\in I} Du_\e^i \psi_i+\sum_{i\in I} u_\e^i\otimes D\psi_i=A_1 + A_2.\]
Now due to \eqref{eq:uniformFinite}, $A_1\in L^p(\Omega)$. For $A_2$ note using the fact that $\sum_{i\in I}D\psi_i=0$, Lemma \ref{lem:mollifier} and again \eqref{eq:uniformFinite},
\begin{align*}
\|A_2\|_{L^q(\Omega)} \lesssim C \|\sum_{i\in I}\e^\Theta\|D\psi_i\|_{L^\infty(\Omega)}^{-m\Theta+1} 1_{\supp \psi_i}\|_{L^q(\Omega)}\lesssim C\e^\Theta.
\end{align*}
To obtain the last inequality we also noted the choice of $m'$. Hence $u_\e\to u$ in $W^{1,p}(\Omega)$.
Using \eqref{eq:upperBoundDerivF}, we find using also the bound on $A_2$,
\begin{align*}
\int_\Omega F(x,Du_\e)\d x \lesssim \int_\Omega F(x,A_1)+|A_2|(1+|A_1|^{q-1}+|A_2|^{q-1})\d x\\
\lesssim 1+C+\int_\Omega F(x,A_1)\d x+\int_\Omega |A_1|^{q-1}|A_2|\d x.
\end{align*}
Set for $j\in I$, $I_j=\{i\in I\colon \supp \psi_i\cap \supp \psi_j\neq\emptyset\}$. Then using H\"older's inequality and Lemma \ref{lem:mollifier}
\begin{align*}
\int_\Omega |A_1|^{q-1}|A_2|\d x\leq& \sum_{j\in I} \int_{\Omega_j} |A_2||Du_\e^j|^{q-1}\d x \\
\lesssim& C^q \sum_{j\in I} (\e\|D\psi_j\|_{L^q(\Omega_j)}^{-m})^{-\frac n p\big(1-\frac p q\big)} \sum_{I_j} \e^\Theta \|D\psi_i\|_{L^\infty(\Omega_i)}^{-m\Theta+1} \|1_{\psi_i}\|_{L^q(\Omega)}\\
\lesssim& C^q \e^\tau\sum_{i\in I} \|D\psi_i\|_{L^\infty(\Omega_i)}^{-m\tau+1}\|1_{\psi_i}\|_{L^q(\Omega)}
\end{align*}
with $\tau=\Theta -\frac{n(q-1)}{p}\left(1-\frac p q\right)>0$ by our restriction on $q$. By our choice of $m'$ and \eqref{eq:uniformFinite}, we conclude that the right-hand side is bounded uniformly in $\e$. This concludes the proof.
\end{proof}

We next specialise to a specific choice of $u_\e^i$ and deal with convergence of the term $\int_\Omega F\big(x,\sum_{i\in I} Du_\e^i\psi_i)\big)\d x$.
\begin{lemma}\label{lem:ue}
Let $\Omega$ be a domain. Suppose $1<p\leq q<\frac{(n+\alpha)p}{n}$. Choose $m'\geq 1$ such that $m'\big(\Theta -\frac{n(q-1)}{p}\left(1-\frac p q\right)\big)\geq 1$.
Given $u\in W^{1,p}(\Omega)$, $\e\in\big(0,1/(1+1/6)^n\big)$ and $m\geq 1$ write
\begin{align}\label{def:ue}
 u_{\varepsilon} = \sum_{i\in I} u\star\phi_{\varepsilon \delta_i }\psi_i \qquad\text{ with } \delta_i = |K_i|^\frac{m}{n}.
\end{align}
Here $\{K_i\}_{i\in I}$, a WB-covering of $\Omega$ and $\{\psi_i\}_{i\in I}$ the partition of unity associated to this covering by Theorem \ref{thm:WBunity}.
Assume that $F(x,\cdot)$ satisfies \eqref{def:bounds1}-\eqref{def:bounds3} and \eqref{def:changeOfXAlt}.
Then if $m>m'$, up to passing to a subsequence if necessary, $u_\e\in W^{1,p}_u(\Omega)\cap W^{1,q}_{\tp{loc}}(\Omega)$, $u_\e\to u$ in $W^{1,p}(\Omega)$ and 
\[
\int_\Omega F(x,Du_\e)\d x\to \int_\Omega F(x,Du)\d x \qquad \text{ as } \e\searrow 0.
\]
\end{lemma}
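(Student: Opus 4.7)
The plan is to apply Lemma \ref{lem:sumunity} with $u_\varepsilon^i := u \star \phi_{\varepsilon\delta_i}$, which is well-defined on $K_i$ once $\varepsilon$ is small enough (uniformly in $i$, since $m>1$ and $|K_i|\leq |\Omega|$). Theorem \ref{thm:WBunity} together with Theorem \ref{thm:covering} gives $\|D\psi_i\|_{L^\infty}\sim |K_i|^{-1/n}$, so $\|D\psi_i\|_{L^\infty}^{-m}\sim \delta_i$; Lemma \ref{lem:mollifier}(iii) and Young's inequality then produce the two bounds in \eqref{eq:uebounds}, while \eqref{eq:uniformFinite} and \eqref{eq:partitionIntersection} follow from the finite multiplicity \eqref{def:WBcoverMultiplicity} and the fact that intersecting cubes in the WB-covering have comparable size. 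Lemma \ref{lem:sumunity} then yields $u_\varepsilon\to u$ in $W^{1,p}(\Omega)$ together with the decomposition $Du_\varepsilon=A_1+A_2$ where $A_1=\sum_i \psi_i Du_\varepsilon^i$ and $A_2=\sum_i u_\varepsilon^i\otimes D\psi_i$; inspection of the proof of Lemma \ref{lem:sumunity} shows in addition that $\int_\Omega(|A_2|+|A_1|^{q-1}|A_2|+|A_2|^q)\,dx\to 0$.

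For the energy convergence, the error estimate just noted combined with \eqref{eq:upperBoundDerivF} and the mean value theorem gives $\int_\Omega|F(x,Du_\varepsilon)-F(x,A_1)|\,dx\to 0$, so it suffices to prove $\int_\Omega F(x,A_1)\,dx\to \int_\Omega F(x,Du)\,dx$. Convexity of $F(x,\cdot)$ combined with $\sum_i\psi_i=1$ yields the pointwise bound $F(x,A_1)\leq \sum_i \psi_i F(x,Du_\varepsilon^i)$, and since $m>1$ ensures $\varepsilon\delta_i\leq \min(\varepsilon_0,d(x,\partial\Omega))$ for $x\in\supp\psi_i$ (at small enough $\varepsilon$), Lemma \ref{lem:changeOfXAlt} gives $F(x,A_1(x))\leq h_\varepsilon(x)$ with
\[
h_\varepsilon := C+C'\sum_i \psi_i\,\bigl(F(\cdot,Du)\star \phi_{\varepsilon\delta_i}\bigr).
\]
Fubini and the radial symmetry of $\phi$ give $\int_\Omega h_\varepsilon\,dx = C|\Omega|+C'\int_\Omega F(y,Du(y))\,J_\varepsilon(y)\,dy$ with $J_\varepsilon := \sum_i \psi_i\star\phi_{\varepsilon\delta_i}$; the $J_\varepsilon$ are bounded by $M$ and converge pointwise to $1$, so dominated convergence yields $\int h_\varepsilon\to \int h := C|\Omega|+C'\int F(\cdot,Du)$ (assuming $F(\cdot,Du)\in L^1$; otherwise both sides of the conclusion equal $+\infty$ by weak lower semicontinuity). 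Because $A_1\to Du$ at every Lebesgue point of $Du$ and $F(x,\cdot)$ is continuous, $F(x,A_1)\to F(x,Du)$ a.e.; two applications of Fatou's lemma, to the nonnegative sequences $F(x,A_1)$ and $h_\varepsilon - F(x,A_1)$, deliver both $\liminf \int F(x,A_1)\geq \int F(x,Du)$ and $\limsup \int F(x,A_1)\leq \int F(x,Du)$, whence the convergence.

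The remaining conclusions are easier. On any $K\Subset\Omega$ only finitely many $\psi_i$ are nonzero, so $u_\varepsilon|_K$ is a finite sum of smooth functions and $u_\varepsilon\in W^{1,q}_{\tp{loc}}(\Omega)$. For $u_\varepsilon\in W^{1,p}_u(\Omega)$, write $v_\varepsilon = u_\varepsilon - u = \sum_i\psi_i (u_\varepsilon^i - u)$; the Whitney relation $d(K_i,\partial\Omega)\sim |K_i|^{1/n}$ combined with the standard mollifier estimate produces
\[
\|v_\varepsilon\|_{L^p(\{d(x,\partial\Omega)<2\lambda\})}\lesssim \varepsilon\lambda^m\|Du\|_{L^p(\Omega)}.
\]
Since $m>1$, the cut-off approximation $\eta_\lambda v_\varepsilon$ (with $\eta_\lambda\equiv 0$ on $\{d<\lambda\}$ and $|D\eta_\lambda|\lesssim 1/\lambda$) converges to $v_\varepsilon$ in $W^{1,p}(\Omega)$ as $\lambda\to 0$; since $\eta_\lambda v_\varepsilon$ has compact support in $\Omega$, this shows $v_\varepsilon\in W^{1,p}_0(\Omega)$.

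The main obstacle is the calibration of the scale $\delta_i = |K_i|^{m/n}$: it must be fine enough for the $L^p$-error estimates in Lemma \ref{lem:sumunity} to close, it must satisfy $\varepsilon\delta_i < d(\cdot,\partial\Omega)$ so that Lemma \ref{lem:changeOfXAlt} applies, and it must vanish fast enough near $\partial\Omega$ to preserve the trace. The strict inequality $m>m'\geq 1$ is precisely what provides the slack needed in all three conditions.
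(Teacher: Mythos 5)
Your proof is correct and shares the paper's overall architecture (reduce to $\int F(x,A_1)\,dx$ via Lemma \ref{lem:sumunity}, apply convexity plus Lemma \ref{lem:changeOfXAlt}, then a generalised dominated convergence), but you make two steps more explicit and handle one step genuinely differently. For the energy convergence, the paper is content with invoking ``a version of the dominated convergence theorem'' after establishing $\int_\Omega F(x,A_1)\,dx \lesssim 1+\sum_i\int_\Omega (F(\cdot,Du)\star\phi_{\e\delta_i})\psi_i\,dx\to 1+\F(Du)$; you supply the missing detail by using Fubini and the radial symmetry of $\phi$ to write $\int h_\e = C|\Omega|+C'\int F(\cdot,Du)\,J_\e$ with $J_\e=\sum_i\psi_i\star\phi_{\e\delta_i}$, showing $0\le J_\e\le M$ and $J_\e\to 1$, and then applying Fatou twice. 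This is a clean way to justify the Pratt-type convergence and is a worthwhile addition; the only point to spell out is that $J_\e\le M$ uses that the support of $\psi_i\star\phi_{\e\delta_i}$ is still inside $K_i$, which holds for $\e$ small uniformly because $\supp\psi_i\subset\frac{1+\delta/2}{1+\delta}K_i$ and $\e\delta_i\lesssim\e|K_i|^{1/n}$ as $m\geq1$. For the trace statement $u_\e\in W^{1,p}_u(\Omega)$ your route differs from the paper's: the paper invokes a density argument to assume $u\in C(\overline\Omega)$ and then reads off the boundary value from the pointwise bound $|u_\e-u|\leq M\,\omega_u(r)$ with $r\ll d(x,\p\Omega)$, while you derive the quantitative estimate $\|u_\e-u\|_{L^p(\{d<2\lambda\})}\lesssim\e\lambda^m\|Du\|_{L^p}$ and use a cut-off $\eta_\lambda$ together with $m>1$ to show $u_\e-u\in W^{1,p}_0(\Omega)$. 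Your version is more self-contained and avoids invoking a density result, which is an advantage given the lemma only assumes $\Omega$ to be a domain; the paper's version is shorter but leans on $C(\overline\Omega)\cap W^{1,p}$ density. Both are valid; overall your proposal is a correct and somewhat more careful implementation of the same argument.
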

\begin{proof}
The well-definedness of $u_\e$ is ensured by Theorem \ref{thm:covering} and Theorem \ref{thm:WBunity}.

By Theorem \ref{thm:WBunity} and Lemma \ref{lem:mollifier} as well as our choice of $m'$, we may apply Lemma \ref{lem:sumunity} to find that $u_\e\in W^{1,p}(\Omega)$ and $u_\e\to u$ in $W^{1,p}(\Omega)$ as $\e\searrow 0$. By \eqref{def:WBcoverMultiplicity} and Lemma \ref{lem:mollifier} it is clear that $u_\e\in W^{1,q}_{\tp{loc}}(\Omega)$.
We turn to the boundary behaviour of $u_\e$.
By a density argument we may assume that $u\in C(\overline{\Omega})$. Note for $x\in\Omega$,
\begin{align*}
|u_\e(x)-u(x)|\leq \sum_{i\in I: x_n\in K_i} |u\star \rho_{\e\delta_i}(x)-u(x)|\psi_i
\leq M \max_{y\in B_r(x)}  |u(y)-u(x)|
\end{align*}
where $r=\max\{\,\delta_i: i\in I, x\in K_i\,\}$. But if $x\in K_i$, then $\delta_i\lesssim |K_i|^\frac 1 n\sim d(x,\p\Omega)$. It follows that $u_\e\in W^{1,p}_u(\Omega)$.

Finally, again by Lemma \ref{lem:sumunity},
\[\int_\Omega F(x,Du_\e)\d x\lesssim c\big(\|u\|_{W^{1,p}(\Omega)}\big)+\int_\Omega F\big(x,\sum_{i\in I} Du\star \phi_{\e\delta_i}\psi_i\big)\d x.\]

For $\e<\e_0 \left(\max_{i\in I} |K_i|^\frac 1 n\right)^{-1}$, we find using Jensen's inequality, \eqref{def:WBcoverMultiplicity}, Lemma \ref{lem:changeOfXAlt}, Lemma \ref{lem:mollifier} and the dominated convergence theorem,
\begin{align}\label{eq:changeOfXUse}
\int_\Omega F\big(x,\sum_{i\in I} Du\star \phi_{\e\delta_i}\psi_i\big)\leq& \sum_{i\in I}\int_\Omega \int_{B_1} F(x,Du(x-\e\delta_i y))\phi(y)\d y\psi_i\d x\\
\lesssim& 1+\sum_{i\in I} \int_\Omega (F(\cdot,Du(\cdot))\star\phi_{\e\delta_i})(x)\psi_i\d x\nonumber\\
\xrightarrow{\e\to 0}& 1+\sum_{i\in I} \int_\Omega F(x,Du)\psi_i\d x= 1+\F(Du)<\infty.\nonumber
\end{align}
An application of a version of the dominated convergence theorem concludes the proof.
\end{proof}

\subsection{Proof of Theorem \ref{thm:nonautonReg}}
We are now able to prove Theorem \ref{thm:nonautonReg}.
\begin{proof}[Proof of Theorem \ref{thm:nonautonReg}]
Let $u\in W^{1,p}_g(\Omega)$. Let $\Omega^s,\Psi_s,\hat g$ be as constructed in Lemma \ref{lem:diffeos}. Extend $u$ by $\hat g$ to $\Omega^s$, still denoting the extension by $u$. Define 
\begin{align}\label{def:boundaryDiffeoMaps}
F^s(x,z)=F\left(\Psi_s^{-1}(x),z D\Psi_s(\Psi_s^{-1}(x))\right) &\qquad \F^s(u)=\int_\Omega F^s(x,Du)-f\cdot u\d x\\
 u^s(x)&=u(\Psi_s^{-1}(x))\nonumber.
\end{align}

By Lemma \ref{lem:extension}, $u\in W^{1,p}(\Omega^s)$. Since $|D\Psi_s-\tp{Id}|\to 0$ uniformly in $\Omega$, we find ${u^s\to u\in W^{1,p}(\Omega)}$.
With the change of coordinates $\tilde x = \psi^{-1}(x)$ we may compute
\begin{align*}
\int_\Omega F^s(x,Du^s)\d x=&\int_\Omega F(\Psi_s^{-1}(x),Du(\Psi_s^{-1}(x))D\Psi_s^{-1}(x)D\Psi_s(\Psi_s^{-1}(x)))\\
=&\int_\Omega F(\Psi_s^{-1}(x),Du(\Psi_s^{-1}(x))\d x \\
=& \int_{\Omega} F(\tilde x,Du(\tilde x))J_{\Psi_s}\d \tilde x+\int_{\Omega^s\setminus\Omega} F(\tilde x, Du(\tilde x))J_{\psi_s}\d \tilde x \to \int_\Omega F(Du)\d x
\end{align*}
as $s\nearrow 1$, using \eqref{def:bounds2} and absolute continuity of the integral since $u\in W^{1,q}(\Omega^s\setminus\Omega)$.

We check that there is $s_0>0$ such that $F^s(x,z)$ satisfies \eqref{def:bounds1}-\eqref{def:bounds3} and \eqref{def:changeOfXAlt} with constants independent of $s$ for $s\in(0,s_0)$. Using the properties of $\Psi_s$ obtained from Lemma \ref{lem:diffeos} straightforward computations show that \eqref{def:bounds1}-\eqref{def:bounds3} hold if $s\in(0,s_0)$ for some $s_0>0$. Supoose \eqref{def:changeOfXAlt} holds for $F(x,z)$. Let $\e\in(0,\e_0)$. Choosing $\tau$ sufficiently small, we can ensure that $|x-y|\leq \tau\Rightarrow |\psi_s^{-1}(x)-\psi_s^{-1}(y)|<\e$ for all $s\in(0, s_0)$. Thus,
\begin{align*}
\min_{x\in \overline{B_\tau(x_0)\cap\Omega}} F^s(x,z) =& \min_{x\in \overline{B_\e(\Psi_s^{-1}(x_0))\cap\Omega}} F\left(x,z D\Psi_s(\Psi_s^{-1}(x))\right)
\end{align*}
and so $F^s(x,z)$ satisfies \eqref{def:changeOfXAlt}.

Let now $u$ be a pointwise minimiser of $\F(\cdot)$ in the class $W^{1,p}_g(\Omega)$.
Using our observations on $u^s$, Lemma \ref{lem:ue} and a diagonal subsequence argument we find a sequence $u_k\in W^{1,q}_g(\Omega)$ such that $u_k\rightharpoonup u$ weakly in $W^{1,p}(\Omega)$ and in addition
\[\int_\Omega F^{1/k}(x,Du_k)\d x\to \int_\Omega F(x,Du)\d x.\]
 Fix $0\leq \beta<\alpha$. Since $F^s(x,z)$ satisfies \eqref{def:bounds1}-\eqref{def:bounds3} we may apply Lemma \ref{lem:apriori} to find that the pointwise minimisers $v_k$ of 
\begin{align*}
\F^{1/k}_{\e_k}(v) = \int_\Omega F^{1/k}(x,Dv)+\e_k |Dv|^q-f\cdot v\d x
\end{align*}
satisfy for some $\gamma>0$ the apriori estimate
\begin{align*}
\|v_k\|_{W^{1,\frac{np}{n-\beta}}(\Omega)}\lesssim& \left(1+\F^{1/k}_{\e_k}(v_k)+\|g\|_{W^{1+\max\left(\alpha,\frac 1 q\right),q}(\Omega)}+\|f\|_{L^{q'}(\Omega)}\right)^\gamma.
\end{align*}
Here $\e_k = \left(1+\e^{-1}+\|Du_k\|_{L^q(\Omega)}\right)^{-1}$.
Using the minimality of $v_k$ and then the choice of $u_k$ and $\e_k$, we deduce
\begin{align*}
\|v_k\|_{W^{1,\frac{np}{n-\beta}}(\Omega)}\lesssim& \left(1+\F^{1/k}_{\e_k}(u_k)+\|g\|_{W^{1+\max\left(\alpha,\frac 1 q\right),q}(\Omega)}+\|f\|_{L^{q'}(\Omega)}\right)^\gamma\\
\lesssim& \left(1+\F(u)+\|g\|_{W^{1+\max\left(\alpha,\frac 1 q\right),q}(\Omega)}+\|f\|_{L^{q'}(\Omega)}\right)^\gamma.
\end{align*}

In particular we note that the estimate is independent of $s$ and $\e$. Choosing now $\beta$ sufficiently large that we obtain an estimate in $W^{1,q}(\Omega)$, we may extract a subsequence of $v_k$ converging weakly in $W^{1,q}(\Omega)$ to some $v$ and by weak lower semicontinuity of norms, $v\in W^{1,q}(\Omega)$. Moreover by weak semicontinuity and minimality of $v_k$ in the class $W^{1,q}_g(\Omega)$,
\begin{align*}
\F(v) =& \lim_{k\to\infty} \int_\Omega F^{1/k}(x,Dv)-f\cdot v\d x
\leq \lim_{k\to\infty} \F^{1/k}_{\e_k}(x,v_k)\leq \lim_{k\to\infty} \F^{1/k}_{\e_k}(u_k)
= \F(u).
\end{align*}
By minimality of $u$ in the class $W^{1,p}_g(\Omega)$, we conclude $\F(v)=\F(u)$. But then by convexity of $\F(\cdot)$ it follows that $v=u$, concluding the proof.
\end{proof}

We briefly comment on how to prove Corollary \ref{cor:higherDiff}.
\begin{proof}[Proof of Corollary \ref{cor:higherDiff}]
Once Theorem \ref{thm:nonautonReg}  is available, there is no problem in making sense of the Euler-Lagrange equation for $u$. Hence we may repeat the arguments of Lemma \ref{lem:apriori} with $\F_\e(\cdot)$ and $v_\e+g$ replaced by $\F(\cdot)$ and $u$ respectively. The equivalent estimate to \eqref{eq:besovestimate2} and Theorem \ref{thm:embedding} then give the result. In the case of Theorem \ref{thm:regularityRelaxed} using Lemma \ref{lem:relaxedEuler} the same argument applies.
\end{proof}

\subsection{Proof of Proposition \ref{cor:lavrentiev}}
We intend to use Lemma \ref{lem:starshapedLipschitz} to reduce to the case of a Lipschitz domain. Thus we focus first on the case of improving the regularity near the boundary for the sequence $(u_\e)$ from Lemma \ref{lem:ue} in the case of a star-shaped domain.
\begin{lemma}\label{lem:singleDomain}
Let $1<p\leq q<\big(1+\frac{\alpha}{(\alpha+1)n}\big)p$.
Suppose $\Omega$ is a strongly star-shaped Lipschitz domain. Let $\gamma'\Subset\gamma\subset\p\Omega$ where $\gamma'$ and $\gamma$ are connected. Suppose $\Omega'\subset\Omega$ with $\p\Omega'\cap \p\Omega = \gamma'$ and such that if $N_{\gamma'}$ is a neighbourhood of $\gamma'$, then $\Omega'\setminus N_{\gamma'}\Subset\Omega$. If $u\in W^{1,p}(\Omega)\cap W^{1,p}(\p\Omega)\cap W^{1,q}(\gamma)$, then there is $(u_n)\subset W^{1,p}_u(\Omega)\cap W^{1,q}(\Omega')$ such that $u_n\to u$ in $W^{1,p}(\Omega)$ and $\int_{\Omega'} F(x,Du_n)\d x\to \int_{\Omega'} F(x,Du)\d x.$
\end{lemma}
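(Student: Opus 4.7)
The plan is to reduce to Lemma~\ref{lem:ue} by embedding $\Omega$ in a slightly larger domain across $\gamma'$ on which $u$ admits an extension of finite $F$-energy, applying the Whitney--Besicovitch mollification there, restricting to $\Omega$, and then correcting the trace on $\gamma'$.

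First, using the Lipschitz structure of $\partial\Omega$ along $\gamma$ (and the outward transversal field supplied by the strongly star-shaped hypothesis), choose an open neighbourhood $V\subset\R^n$ of $\overline{\gamma'}$ such that $\gamma'':=V\cap\partial\Omega$ is relatively open in $\gamma$ with $\gamma'\Subset\gamma''\Subset\gamma$. Since $u|_\gamma\in W^{1,q}(\gamma)$, standard trace-extension produces $w\in W^{1,q}(V)$ with $w|_{\gamma''}=u|_{\gamma''}$ in the sense of traces. Set $\tilde\Omega=\Omega\cup V$ and define
\[
\tilde u=\begin{cases}u&\text{on }\Omega,\\ w&\text{on }V\setminus\overline\Omega.\end{cases}
\]
Compatibility of traces on $\gamma''$ and Lemma~\ref{lem:extension} give $\tilde u\in W^{1,p}(\tilde\Omega)$; the $q$-growth bound \eqref{def:bounds2} together with $Dw\in L^q(V)$ yields $F(\cdot,D\tilde u)\in L^1(\tilde\Omega)$.

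Apply Lemma~\ref{lem:ue} to $\tilde u$ on $\tilde\Omega$ (the exponent restriction $q<(1+\alpha/((\alpha+1)n))p<(n+\alpha)p/n$ is satisfied) to obtain $\tilde u_n\in W^{1,p}_{\tilde u}(\tilde\Omega)\cap W^{1,q}_{\mathrm{loc}}(\tilde\Omega)$ with $\tilde u_n\to\tilde u$ in $W^{1,p}(\tilde\Omega)$ and $\int_{\tilde\Omega}F(x,D\tilde u_n)\,dx\to\int_{\tilde\Omega}F(x,D\tilde u)\,dx$. Splitting the integral over $\Omega$ and $V\setminus\overline\Omega$ and invoking weak lower semicontinuity of $F(x,\cdot)$ on each open piece localises the convergence, so $\int_\Omega F(x,D\tilde u_n)\,dx\to\int_\Omega F(x,Du)\,dx$ and, by the same splitting, $\int_{\Omega'}F(x,D\tilde u_n)\,dx\to\int_{\Omega'}F(x,Du)\,dx$. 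Since $\overline{\gamma'}\Subset V\subset\tilde\Omega$, the WB cubes meeting $\overline{\gamma'}$ have diameters bounded below by a positive constant, so $\tilde u_n\in W^{1,q}$ in a neighbourhood of $\overline{\gamma'}$ in $\tilde\Omega$; consequently $u_n^{(1)}:=\tilde u_n|_\Omega\in W^{1,p}(\Omega)\cap W^{1,q}(\Omega')$ and has trace equal to $u$ on $\partial\Omega\setminus\gamma''$.

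The trace $\tilde u_n|_{\gamma''}$ however differs from $u|_{\gamma''}$ by $g_n:=(u_n^{(1)}-u)|_{\gamma''}$; we have $g_n\in W^{1-1/q,q}(\gamma'')$ (from the $W^{1,q}$-regularity of $\tilde u_n$ near $\gamma''$) and $g_n\to 0$ in $W^{1-1/p,p}(\gamma'')$ by trace continuity. Using the trace-extension theory on the Lipschitz domain $\Omega$, construct $h_n\in W^{1,q}(\Omega)$ with $h_n|_{\gamma''}=g_n$, zero trace on $\partial\Omega\setminus\gamma''$, supported in a small neighbourhood of $\gamma''$ in $\Omega$, and satisfying $\|h_n\|_{W^{1,p}(\Omega)}\to 0$. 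Setting $u_n:=u_n^{(1)}-h_n\in W^{1,p}_u(\Omega)\cap W^{1,q}(\Omega')$ gives $u_n\to u$ in $W^{1,p}(\Omega)$, and the energy convergence on $\Omega'$ follows from that of $u_n^{(1)}$ via a convexity estimate together with \eqref{eq:upperBoundDerivF} applied to the difference $Du_n-Du_n^{(1)}=-Dh_n$.

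The main obstacle is this last step: $h_n$ is controlled only in $W^{1,p}$ while $F$ has $q$-growth, so to pass the energy through one must carefully combine convexity of $F$, the $W^{1,q}(\Omega')$-control of $u_n^{(1)}$ for each fixed $n$, and Hölder/interpolation estimates. The stricter exponent bound $q<\min(p+1,(1+\alpha/((\alpha+1)n))p)$ appearing in the statement is what powers these interpolation arguments and distinguishes this case from the weaker conclusion with $Y=W^{1,q}_{\mathrm{loc}}$ in the first part of Proposition~\ref{cor:lavrentiev}, where no trace correction is necessary.
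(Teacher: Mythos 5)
Your overall strategy --- extend $u$ across $\gamma'$ to a slightly larger domain using the $W^{1,q}(\gamma)$ hypothesis, run the Whitney--Besicovitch mollification of Lemma~\ref{lem:ue} there, restrict, and correct the trace --- is genuinely different from the paper's, which instead exploits the strong star-shapedness to build a \emph{homogeneous extension} of $u$ to $\R^n$ and then uses the dilated functions $u^s(x)=s\,u(x/s)$. The key virtue of the paper's construction is that $u^s$ has \emph{the same trace as $u$} on $\p\Omega$ (since the homogeneous extension gives $u(x/s)=s^{-1}u(x)$ for $x\in\p\Omega$), so no trace correction is ever needed; one then derives the exponent restriction $q<\bigl(1+\frac{\alpha}{(\alpha+1)n}\bigr)p$ by balancing the H\"older factor $(1-s)^\alpha$ from \eqref{def:bounds3} against the mollifier blow-up $(\e\delta_i)^{-\frac{n}{p}(q-p)}$ in the thin strip near $\gamma$.

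The gap in your proposal is exactly at the step you flag as ``the main obstacle,'' and it is not one that the stated strict exponent bound repairs. Your correction term $h_n$ must cancel the trace mismatch $g_n=(u_n^{(1)}-u)|_{\gamma''}$. Because $\tilde u_n\to\tilde u$ only in $W^{1,p}$, and because $u$ itself is merely $W^{1,p}$ in a neighbourhood of $\gamma''$ inside $\Omega$ (the hypothesis $u|_\gamma\in W^{1,q}(\gamma)$ concerns the trace, not the trace of $Du$ nor $Du$ near $\gamma$), $g_n$ is bounded in $W^{1-1/q,q}(\gamma'')$ but converges to $0$ only in $W^{1-1/p,p}(\gamma'')$; hence any reasonable lifting satisfies $\|h_n\|_{W^{1,p}(\Omega)}\to 0$ while $\|h_n\|_{W^{1,q}(\Omega)}$ is merely bounded. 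But to pass the $F$-energy from $u_n^{(1)}$ to $u_n=u_n^{(1)}-h_n$, convexity and \eqref{eq:upperBoundDerivF} give
\begin{align*}
\Bigl|\int_{\Omega'}F(x,Du_n)-F(x,Du_n^{(1)})\,\d x\Bigr|
\lesssim \int_{\Omega'}|Dh_n|\bigl(1+|Du_n^{(1)}|^{q-1}+|Dh_n|^{q-1}\bigr)\,\d x,
\end{align*}
and the term $\int_{\Omega'}|Dh_n|^q\,\d x$ does not vanish under $L^p$-convergence plus $L^q$-boundedness (interpolation gives $\|Dh_n\|_{L^r}\to 0$ only for $r<q$). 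Likewise, the cross term $\int|Dh_n|\,|Du_n^{(1)}|^{q-1}$ cannot be dispatched by H\"older without placing $Dh_n$ in $L^q$: splitting as $\|Dh_n\|_{L^r}\|Du_n^{(1)}\|_{L^{(q-1)r'}}^{q-1}$ only keeps the second factor controlled by the available $W^{1,q}(\Omega')$-bound when $r\geq q$. So neither the displayed exponent restriction nor any interpolation between $p$ and $q$ closes this estimate as written, and no uniform $W^{1,q}$-smallness of $h_n$ is available from your construction. This is precisely the difficulty the paper's homogeneous-extension-plus-dilation device is designed to bypass.
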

\begin{proof}
Without loss of generality assume that $\Omega$ is strongly star-shaped with respect to $0$.
Let $u\in W^{1,p}(\Omega)\cap W^{1,p}(\p\Omega)\cap W^{1,q}(\gamma)$.
Consider the homogeneous extension of $u$ to $\R^n$ by setting 
\begin{align*}
u(x)=\begin{cases}
	u(x) &\qquad\text{ if } x\in\Omega\\
	1/{\lambda'}u(\lambda' x) &\qquad\text{ if } x\not\in\Omega \text{ where } \lambda' = \sup \{\,\lambda>0: \lambda x\in\overline\Omega\,\}.
	\end{cases}
\end{align*}
For $\frac 1 2<s<x$ we define $u^s(x)=s u(x/s)$. It is easy to check that $u_s\in W^{1,p}_u(\Omega)$ and $u_s\to u$ in $W^{1,p}(\Omega)$ as $s\nearrow 1$. 
Let $\{K_i\}_{i\in I}$ be a WB-covering of $\Omega$ and $\{\psi_i\}_{i\in I}$ the partition of unity associated to this covering by Theorem \ref{thm:WBunity}.
The existence of $\{K_i\}_{i\in I}$ and $\{\psi_i\}_{i\in I}$ is ensured by Theorem \ref{thm:covering} and Theorem \ref{thm:WBunity}.

Take $m\geq 1$ such that $m\big(\Theta -\frac{n(q-1)}{p}\left(1-\frac p q\right)\big)>1$. Define with $\delta_i = |K_i|^\frac m n$,
\begin{align*}
 u_\e^s = \sum_{i\in I} u^s\star\phi_{\varepsilon \delta_i }\psi_i \qquad
 u_\e= \sum_{i\in I} u\star\phi_{\varepsilon \delta_i }\psi_i.
\end{align*}
Note that for $s,\e$ sufficiently small, $u_\e^s$ is well-defined.
By Theorem \ref{thm:WBunity} and Lemma \ref{lem:mollifier}, we may apply Lemma \ref{lem:sumunity} to $u_\e^s$ in order to see that $(u_\e^s)\in W^{1,p}_u(\Omega)$ with $u_\e^s\to u^s$ in $W^{1,p}(\Omega)$ as $\e\to 0$.
 Further by Lemma \ref{lem:mollifier} noting that $u_\e^s$ is $W^{1,q}$-regular in a neighbourhood of $\gamma'$, $u_\e^s\in W^{1,q}(\Omega')$. Using Lemma \ref{lem:sumunity}, 
\begin{align*}
\int_{\Omega'} F(x,Du_\e^s)\d x \lesssim& c(\|u^s\|_{W^{1,p}(\Omega)})+ \int_{\Omega'} F\big(x,\sum_{i\in I} Du_\e^s\psi_i\big)\d x\\
\lesssim& c\big(\|u\|_{W^{1,p}(\Omega)}+\|u\|_{W^{1,p}(\p\Omega)}\big)+\int_{\Omega'}F\big(x,\sum_{i\in I} Du^s\star\phi_{\e\delta_i}\psi_i\big)\d x. 
\end{align*}

 Let $1-s$ be sufficiently small that $\p\big(s^{-1}\Omega'\setminus\Omega\big)\subset\gamma$ and in addition for any neighbourhood $N_{\gamma}$ of $\gamma$,  $(s^{-1}\Omega'\cap\Omega)\setminus N_\gamma\Subset\Omega$. Then using the change of coordinates $x\to s x$,
\begin{align*}
\int_{\Omega'} F(x,\sum_{i\in I} Du_\e^s\psi_i)\d x=& s^{-n}\int_{s^{-1}\Omega'} F(s x,A)\d x\\
\leq & s^{-n} \int_{s^{-1}\Omega'\setminus \Omega}F(s x,A)\d x+s^{-n}\int_{s^{-1}\Omega'\cap\Omega} F(s x,A)\d x
\end{align*}
with $A= \sum_{i\in I} \big(Du(\cdot)\star \phi_{\e\delta_i}\big)(x)\psi_i(s x)$. Now note that $\|u\|_{ W^{1,q}(s^{-1}\Omega'\setminus\Omega)}\lesssim \|u\|_{W^{1,q}(\gamma)}$, so using \eqref{def:bounds2}, Lemma \ref{lem:mollifier} and \eqref{def:WBcoverMultiplicity}, we see that
\begin{align*}
s^{-n} \int_{s^{-1}\Omega'\setminus \Omega}F(s x,A)\d x\lesssim& s^{-n}\int_{s^{-1}
\Omega'\setminus\Omega} 1+|A|^q\d x
\lesssim 1+\|u\|_{W^{1,q}(\gamma)}
\end{align*}
independently of $s$ and $\e$.
Note that if $x\in\supp \psi_i(s\cdot)\cap \Omega$, then $|K_i|^\frac 1 n\sim d(x,\p\Omega)\gtrsim (1-s)$ and so $\delta_i\gtrsim (1-s)^m$. Thus we find for some sufficiently small $c>0$ and using Lemma \ref{lem:mollifier} as well as \eqref{def:bounds3},
\begin{align*}
\int_{s^{-1}\Omega'\cap\Omega} (1-s)^\alpha|A_1|^q\d x\lesssim (1-s)^\alpha\int_{s^{-1}\Omega'\cap\Omega} \sum_{i\in I: |K_i|^\frac 1 n\geq c(1-s)}(\e\delta_i)^{-\frac n p(q-p)}\|u\|_{W^{1,p}(K_i)}^p.
\end{align*}
Recalling the definition of $\delta_i$, if $\alpha-m\frac n p(q-p)\geq 0$, then the right-hand side is bounded independently of $s$.
Recalling the choice of $m$ we find that we need to have ${q<(1+\frac{\alpha}{(\alpha+1)n}) p}$.

By a version of the dominated convergence theorem we conclude since $Du_\e^s\to Du^s$ almost everywhere in $\Omega'$,
\begin{align*}
\int_{\Omega'} F(x,Du_\e^s)\d x\to \int_{\Omega'} F(x,Du_\e)\d x \qquad \text{as }s\nearrow 1.
\end{align*}
By Lemma \ref{lem:ue}
\begin{align*}
\int_{\Omega'} F(x,Du_\e)\d x\to \int_{\Omega'} F(x,Du)\d x \qquad \text{ as } \e\searrow 0.
\end{align*}
Thus by a diagonal subsequence argument, we can extract a subsequence of $(u_\e^s)$ with all the desired properties.
\end{proof}

We can now finally prove Proposition \ref{cor:lavrentiev}.
\begin{proof}[Proof of Proposition \ref{cor:lavrentiev}]
Note that the first part of Proposition \ref{cor:lavrentiev} is a consequence of Lemma \ref{lem:ue} and Lemma \ref{lem:lavrientiev}.

For the second part
by Lemma \ref{lem:lavrientiev} it suffices to exhibit a sequence $(u_n)\subset W^{1,q}_g(\Omega)$ such that $u_n\rightharpoonup u$ weakly in $W^{1,p}(\Omega)$ and $\F(u_n)\to \F(u)$ as $n\to\infty$. By Lemma \ref{lem:starshapedLipschitz} we may write
\[\Omega =\bigcup_{i=1}^N \Omega_i = \bigcup_{i=1}^N \omega_i\]
where $\Omega_i$ is a strongly star-shaped Lipschitz domain, $\omega_i\Subset\Omega_i$ relative to $\Omega$ and further ${\p\omega_i\cap\p\Omega \Subset \p\Omega_i\cap\p\Omega}$ if $\p\Omega_i\cap\Omega\neq\emptyset$. We may also assume that $u\in W^{1,p}(\p\Omega_i)$ for $i=1,\ldots,N$ with $\|u\|_{W^{1,p}(\p\Omega_i)}\lesssim \|u\|_{W^{1,p}(\Omega_i)}+\|g\|_{W^{1+1/q,q}(\Omega)}$. Let $\psi_i$ be a partition of unity subordinate to $\omega_i$. By Lemma \ref{lem:singleDomain}, there are sequences $(u_n^i)\subset W^{1,p}(\Omega_i)\cap W^{1,q}(\omega_i)$ such that $u_n^i\to u$ in $W^{1,p}(\Omega)$ and $\int_{\omega_i} F(x,Du_n^i)\d x\to \int_{\omega_i} F(x,Du)\d x$ where $i=1,\ldots,N$. Consider
\begin{align*}
u_n = \sum_{i=1}^N u_n^i\psi_i.
\end{align*}
Then $u_n\in W^{1,q}_g(\Omega)$ and $u_n\to u$ in $W^{1,p}(\Omega)$. Moreover using \eqref{eq:upperBoundDerivF}, H\"older's inequality, Sobolev's embedding and
Jensen's inequality,
\begin{align*}\int_\Omega F(x,Du_n)\d x\lesssim& \sum_{i,j=1}^N\int_\Omega |u_n^i\otimes D\psi_i|(1+|u_n^i\otimes D\psi_j|^{q-1}+|Du_n^j\otimes D\psi_j|^{q-1})\d x\\
&\qquad+\int_\Omega F\big(x,\sum_{i=1}^N Du_n^i\psi_i\big)\d x\\
\lesssim& \sum_{i,j=1}^n \|Du_n^i\|_{L^p(\Omega_i)}^{q-1}\|Du_n^j\|^q_{L^p(\Omega_j)}+\sum_{i=1}^N\int_{\omega_i} F(x,Du_n^i)\psi_i\d x\\
\to& \sum_{i,j=1}^n \|Du_n^i\|_{L^p(\Omega_i)}^{q-1}\|Du_n^j\|_{L^p(\Omega_j)}^q+\int_{\Omega} F(x,Du)\d x<\infty.
\end{align*}
To obtain the second line we used the assumption on $q$.
Thus we conclude by a variant of the domianted convergence theorem that ${\F(u_n)\to \F(u)}$ as $n\to\infty$.
\end{proof}

\section{Regularity under alternatives to assumption \texorpdfstring{\eqref{def:bounds1}}{}}\label{sec:alternatives}
In this section we explore some alternatives to \eqref{def:bounds1} which only require minor modifications of the proofs. To be precise we consider the following assumptions:
Assume one of the following holds for some $\mu\geq 0, \lambda>0$ and for all $z,w\in \R^{n\times m}$ and almost every $x\in\Omega$: 
\begin{align}
\label{def:px}
&\begin{cases}
 \lambda \left(\mu^2+|z|^2+|w|^2\right)^\frac{p(x)-2}{2}|z-w|^2\leq F(x,z)-F(z,w)-\p_z F(x,w)\cdot (z-w) \\[10pt]
\text{ where } p(x)\in C^{0,\alpha}(\Omega) \text{ and there is } \e>0 \text{ such that } 1<p\leq p(x)\leq q 
\end{cases}\tag{H1.1}\\
\label{def:anisotropic}
&\begin{cases}
 \sum_{i=1}^n \left(\mu_i^2+|z_i|^2+|w_i|^2\right)^\frac{p_i-2}{2}|z_i-w_i|^2\leq F(x,z)-F(z,w)-\p_z F(x,w)\cdot (z-w) \\[10pt]
\text{ where } z_i = (z_i)^j_{1\leq j\leq n}, w_i = (w_i)^j_{1\leq j\leq n} \text{ and } 1< p =p_1 \leq p_2\leq...\leq p_n = q
\end{cases}\tag{H1.2}
\end{align}
Before commencing to prove versions of Theorem \ref{thm:regularityRelaxed} and Theorem \ref{thm:nonautonReg}, we remark that replacing \eqref{def:bounds1} with \eqref{def:px} or \eqref{def:anisotropic} also causes a change in the proof of Lemma \ref{lem:convApproximate}: Using the notation of Lemma \ref{lem:convApproximate}, \eqref{eq:convexArg} needs to be replaced by
\begin{align*}
\overline \F\left(\frac{w_1+w_2}{2}\right)+\frac{\nu}{p}\left(\int_\Omega |Dw_1-Dw_2|^{p(x)}\d x\right)^\frac 1 p\leq \frac 1 2 \left(\overline \F(w_1)+\overline \F(w_2)\right).
\end{align*}
or
\begin{align*}
\overline \F\left(\frac{w_1+w_2}{2}\right)+\frac{\nu}{p}\left(\int_\Omega \sum_{i=1}^n |D_i w_1-D_i w_2|^{p_i}\d x\right)^\frac 1 p\leq \frac 1 2 \left(\overline \F(w_1)+\overline \F(w_2)\right),
\end{align*}
respectively. Arguing as before, either estimate suffices to conclude that $Du_\e \to Du$ in $L^p(\Omega)$.

We then have:
\begin{theorem}\label{thm:modified}
Suppose $g\in W^{1+\alpha,q}(\Omega)$ and $F(x,z)$ satisfies assumptions \eqref{def:bounds2}, \eqref{def:bounds3} and either \eqref{def:px} or \eqref{def:anisotropic}. Suppose $1\leq p<q<\frac{(n+\alpha)p}{n}$. Then the conclusion of 
Theorem \ref{thm:regularityRelaxed} still holds. If in fact $g\in W^{1+\max(\alpha,1/q),q}(\Omega)$ and $F(x,\cdot)$ satisfies in addition  \eqref{def:changeOfXAlt}, then the conclusion of Theorem \ref{thm:nonautonReg} holds.
\end{theorem}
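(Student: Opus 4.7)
The strategy is to revisit the proofs of Theorems \ref{thm:regularityRelaxed} and \ref{thm:nonautonReg} and pinpoint the single place where \eqref{def:bounds1} enters essentially, namely the lower bound \eqref{eq:lowerEstimate} used inside Lemma \ref{lem:apriori}. All other ingredients of Sections \ref{sec:relaxed} and \ref{sec:WB}---the finite-difference upper bound \eqref{clm:diffEstimate}, the partition-of-unity reductions in Lemmas \ref{lem:changeOfXAlt}, \ref{lem:sumunity} and \ref{lem:ue}, and the Euler-Lagrange identity Lemma \ref{lem:relaxedEuler}---depend only on \eqref{def:bounds2}, \eqref{def:bounds3} and \eqref{def:changeOfXAlt}, so they transfer without change. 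The modification of Lemma \ref{lem:convApproximate} is already indicated in the remark preceding the theorem.

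Under \eqref{def:px}, applying Lemma \ref{lem:Vfunctional} pointwise with $s=p(x)-2$ upgrades \eqref{eq:lowerEstimate} to
\begin{align*}
\F_\e(v+g)-\F_\e(v_\e+g)\gtrsim \int_\Omega \bigl|V_{\mu,p(x)}(Dv+Dg)-V_{\mu,p(x)}(Dv_\e+Dg)\bigr|^2\d x.
\end{align*}
Splitting $\Omega$ into the subsets where $p(x)\geq 2$ and $p(x)<2$, running the two branches of the original proof of Lemma \ref{lem:apriori} on each piece, and using H\"older's inequality together with $p(x)\geq p$ to reduce both contributions to an $L^p$ quantity, one recovers \eqref{eq:basicBesov} unchanged. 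Under \eqref{def:anisotropic}, an analogous componentwise application of Lemma \ref{lem:Vfunctional} yields a sum of $n$ contributions of the form $\int_\Omega |V_{\mu_i,p_i}(D_iv+D_ig)-V_{\mu_i,p_i}(D_iv_\e+D_ig)|^2\d x$, and since $p=p_1\leq p_i$, each again dominates an $L^p$-norm of the corresponding directional difference quotient; summing over $i$ recovers \eqref{eq:basicBesov} as before. The Besov characterisation \eqref{eq:besovcharacterisation}, Theorem \ref{thm:embedding} and the interpolation \eqref{eq:interpolation} then apply verbatim and furnish the a-priori estimate of Lemma \ref{lem:apriori} in both settings.

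With the modified a-priori estimate in hand, the argument for Theorem \ref{thm:regularityRelaxed} transfers directly via Lemma \ref{lem:convApproximate}. For Theorem \ref{thm:nonautonReg}, the Whitney-Besicovitch construction of $u_\e$ in Section \ref{sec:WB} relies only on the upper-bound hypotheses and \eqref{def:changeOfXAlt}, so Lemmas \ref{lem:sumunity} and \ref{lem:ue} and the diagonal-subsequence argument apply unchanged; the conclusion follows as in the original proof. The main obstacle I expect is the bookkeeping in the subquadratic branch: one must check that the H\"older reduction to $L^p$ under \eqref{def:px} (where the pointwise exponent $p(x)$ fluctuates) still produces the Besov index $1+\tfrac{\alpha}{p}$ (respectively $1+\tfrac{\alpha}{2}$ when $p<2$) and preserves the subcritical condition $q\theta<p$. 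This is where the proofs under \eqref{def:px} and \eqref{def:anisotropic} deviate most visibly from the original, but it causes no genuine difficulty because $p=\min p(\cdot)$ (respectively $p=p_1$) is a uniform lower bound on all exponents in play.
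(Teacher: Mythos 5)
Your proposal is correct and follows essentially the same route as the paper: both identify \eqref{eq:lowerEstimate} as the one place where \eqref{def:bounds1} enters Lemma \ref{lem:apriori}, replace it with the variable-exponent or componentwise lower bound, and note that Lemmas \ref{lem:convApproximate}, \ref{lem:sumunity}, \ref{lem:ue} and the rest of the machinery go through unchanged (modulo the already-indicated modification of \eqref{eq:convexArg}). The only cosmetic difference is one of ordering: you reduce to the uniform $L^p$ quantity immediately after the lower bound, whereas the paper carries the exponents $p(x)$ (respectively $p_i$) through to a bound on $\int_\Omega|Dv_\e|^{np(x)/(n-\beta)}\d x$ (respectively $\int_\Omega\sum_i|D_iv_\e|^{np_i/(n-\beta)}\d x$) and only then lower-bounds by the uniform $p$; both end in the same $W^{1,np/(n-\beta)}$ estimate, so the discrepancy is immaterial for the statement as formulated.
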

\begin{proof}
For simplicity we only focus on the case $p\geq 2$. The case $p<2$ follows from similar considerations.
Assume first that \eqref{def:px} holds. Using the notation of Section \ref{sec:lipschitzDomains}, extend $p(x)$ to a $C^{0,\alpha}$-function on $B(0,R)$ with $p(x)\in[p,q]$ by setting 
\[
p(x)=\inf_{y\in\Omega} \big(p(y)+\|p\|_{C^{0,\alpha}(\Omega)}|x-y|\big).
\]
 for $x\in B(0,R)\setminus\Omega$. 
The first change to the argument occurs in the proof of the a-priori estimate  in Lemma \eqref{lem:apriori}. Instead of \eqref{eq:lowerEstimate} we obtain:
\begin{align}
\F_\e(v+g)-\F_\e(v_\e+g)
\gtrsim&\int_\Omega \left(\mu^2+|Dv+Dg|^2+|Dv_\e+Dg|^2\right)^\frac{p(x)-2}{2}|Dv-Dv_\e|^2\d x\nonumber,
\end{align}
We now argue as before to conclude,
\begin{align*}
\int_\Omega |Dv_\e|^\frac{np(x)}{n-\beta}\d x\lesssim \left(1+\frac 1 2\|v_\e\|_{W^{1,\frac{np}{n-\beta}}(\Omega)}^p+C(\theta)\|v_\e\|_{W^{1,p}(\Omega)}^\frac{\theta q}{(\theta q-p)}+\|g\|_{W^{1+\alpha,q}(\Omega)}^q+\|f\|_{L^{q'}(\Omega)}^{q'}\right).
\end{align*}
Noting that 
\begin{align*}
\int_\Omega |Dv_\e|^\frac{np}{n-\beta}\d x\lesssim \int_\Omega |Dv_\e|^\frac{np(x)}{n-\beta}\d x + 1.
\end{align*}
we finish the proof without further change.

For the proof of Theorem \ref{thm:nonautonReg} we only need to verify that \eqref{def:px} holds for $F^s(x,\cdot)$. The computation is straightforward.

We now assume that \eqref{def:anisotropic} holds. Instead of \eqref{eq:lowerEstimate} we have
\begin{align*}
\F_\e(v+g)-\F_\e(v_\e+g)\gtrsim \int_\Omega \sum_{i=1}^n\left(\mu_i^2+|Dv_i+Dg_i|^2+|Dv_{\e,i}+Dg_i|^2\right)^\frac{p_i-2}{2}|Dv_i-Dv_{\e,i}|^2\d x\nonumber.
\end{align*}
Repeating the arguments given in Lemma \ref{lem:apriori} we use this to obtain
\begin{align*}
\int_\Omega \sum_{i=1}^n |Dv_{\e,i}|^\frac{np_i}{n-\beta}\d x\lesssim \left(1+\frac 1 2\|v_\e\|_{W^{1,\frac{np}{n-\beta}}(\Omega)}^p+C(\theta)\|v_\e\|_{W^{1,p}(\Omega)}^\frac{\theta q}{(\theta q-p)}+\|g\|_{W^{1+\alpha,q}(\Omega)}^q+\|f\|_{L^{q'}(\Omega)}^{q'}\right).
\end{align*}
The proof now concludes by the same arguments as before by noting that
\begin{align*}
\int_\Omega |Dv_\e|^\frac{np}{n-\beta}\d x\lesssim 1+\int_\Omega \sum_{i=1}^n |Dv_{\e,i}|^\frac{n p_i(x)}{n-\beta}\d x
\end{align*}
In the proof of Theorem \ref{thm:nonautonReg} we need to verify that \eqref{def:anisotropic} holds for $F^s(x,\cdot)$. The computation is straightforward.
\end{proof}

\begin{remark}
It is straightforward to adapt the arguments of Theorem \ref{thm:modified} to to growth conditions that combine \eqref{def:px} and \eqref{def:anisotropic}:
\begin{align}\label{def:combined}
\begin{cases}
\sum_{i=1}^n \left(\mu_i^2+|z_i|^2+|w_i|^2\right)^\frac{p_i(x)-2}{2}|z_i-w_i|^2\leq F(x,z)-F(z,w)-\p_z F(x,w)\cdot (z-w) \\[10pt]
\text{ where } z_i = (z_i)^j_{1\leq j\leq n}, w_i = (w_i)^j_{1\leq j\leq n} \text{ and } 1< p\leq p_i(x)\leq q.
\end{cases}\tag{H1.3}
\end{align}
\end{remark}

\section{Examples}\label{sec:examples}
In this section we list a number of examples to which our theory applies. We highlight in particular that we can treat the double-phase functional \ref{eq:2}, the anisotropic $p(x)$-Laplacian \ref{eq:4} as well as more general anisotropic functionals, e.g. \ref{eq:3}.
The theory developed in this paper applies to all the functionals listed below:\begin{enumerate}[itemsep=6pt]
\item $\F_1(u)=\int_\Omega a(x)F(Du)\d x$, where $1\leq a(\cdot)\leq L$,
\item $\F_2(u)=\int_\Omega \sum_{i=1}^n a_i(x) F_i(D_i u)\d x$, where $1\leq a_i(\cdot)\leq L$,
\item \label{eq:2}$\F_3(u)=\int_\Omega |Du|^p+a(x)|Du|^q\d x$ where $0\leq a(x)\in C^{0,\alpha}(\Omega)$,
\item\label{eq:3} $\F_4(u)=\int_\Omega |Du|^p+|a^{i,j}_{\alpha,\beta}(x) D_i u^\alpha D_j u^\beta|^\frac q 2\d x$ where $a^{i,j}_{\alpha,\beta}(\cdot)\in C^{0,\alpha}(\Omega)$ and for all $x\in \Omega$ and $\xi\in \R^{n\times m}$,
\begin{align*}
\lambda(x)|\xi|^2 \leq a_{\alpha,\beta}^{i,j}\xi_i^\alpha \xi_j^\beta \leq L\lambda(x)|\xi|^2
\end{align*}
with $L\geq 1$ and $0\leq \lambda(\cdot)\in C^{0,\alpha}(\Omega)$.
where $0\leq a_i(\cdot)\in C^{0,\alpha}(\Omega)$, and further $1<p\leq p_i\leq q$,
\item\label{eq:4} $\F_5(u)=\int_\Omega \sum_{i=1}^n |D_i u|^{p_i(x)}\d x$, where $p\leq p_i(x)\leq q$ and $p_i(x)\in C^{0,\alpha}(\Omega)$,
\item\label{eq:6} $\F_6(u)=\int_\Omega |Du|^{p(x)} \log(1+|Du|)\d x$, where $1<p\leq p(x)\leq q$,
\item $\F_7(u)=\int_\Omega |Du|^q+a(x)\max(|D_n u|,0)\d x$ where $q>2$, $0\leq a(\cdot)\in C^{0,\alpha}(\Omega)$,
\item\label{eq:7} $\F_8(u)=\int_\Omega F(x,Du)\d x$ where $F(x,z)=h(a(x),z)$ where 
	\begin{enumerate}
	\item $t\to h(t,z)$ is increasing
	\item $h(x,z)$ is convex in the second argument
	\item $a(x)\in C(\overline\Omega)$
	\item $F(x,z)$ satisfies \eqref{def:bounds1}-\eqref{def:bounds3} (or one of \eqref{def:px}, \eqref{def:anisotropic} instead of \eqref{def:bounds1})	
	\end{enumerate}
\end{enumerate}
The first five examples are standard. \ref{eq:6} has been studied as a model case (with $p(x)=\const>1$) in \cite{Marcellini1991}. \ref{eq:7} is inspired by \cite{Qi1993}, whereas the last example is taken from \cite{Esposito2019}.

Before making two observations that are helpful to verify that $W^{1,q}$-regularity holds for minimisers of these examples, let us extract the precise statement for the proof of which we use \eqref{def:changeOfXAlt}.
\begin{corollary}\label{cor:xdependSummary}
Suppose $\Omega$ is a $C^{1,\alpha}$-domain.
Suppose $g\in W^{1+\max\left(\alpha,\frac 1 q\right),q}(\Omega)$. Assume $\F(\cdot)$ satisfies \eqref{def:bounds1}-\eqref{def:bounds3} with $1< p\leq q<\frac{(n+\alpha)p}{n}$. Suppose $u$ is a pointwise minimiser of $\F(\cdot)$ in the class $\in W^{1,p}_g(\Omega)$ and one of the following holds:
\begin{enumerate}
\item\label{eq:weakqseq} There is a sequence $u_k\in W^{1,p}_g(\Omega)\cap W^{1,q}_{\tp{loc}}(\Omega)$ such that $u_k\rightharpoonup u$ weakly in $W^{1,p}(\Omega)$ and $\F(u_k)\to \F(u)$ as $k\to\infty$.
\item\label{eq:commuteMollifier} There is $\e_0>0$ such that for $\e\in(0,\e_0)$,
	\begin{align*}
	F(x,Du\star \phi_{\e})\lesssim 1 + (F(\cdot,Du(\cdot))\star \phi_{\e})(x).
	  \end{align*}
\end{enumerate}
 Then $u\in W^{1,q}(\Omega)$.
\end{corollary}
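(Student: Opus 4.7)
The plan is to follow the proof of Theorem~\ref{thm:nonautonReg} step by step. In that argument, \eqref{def:changeOfXAlt} is invoked in a single place: when applying Lemma~\ref{lem:ue} to produce the $W^{1,q}$-regular approximating sequence for $u$. Within Lemma~\ref{lem:ue} itself, \eqref{def:changeOfXAlt} is used only to invoke Lemma~\ref{lem:changeOfXAlt}, which in turn enters at step~\eqref{eq:changeOfXUse}. I will show that each of the alternative hypotheses~\eqref{eq:weakqseq}, \eqref{eq:commuteMollifier} substitutes for this step, after which the remainder of the proof of Theorem~\ref{thm:nonautonReg} can be transcribed unchanged.

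If~\eqref{eq:commuteMollifier} holds, then the assumed pointwise inequality is exactly the conclusion of Lemma~\ref{lem:changeOfXAlt}. Substituting it at~\eqref{eq:changeOfXUse} leaves the full statement of Lemma~\ref{lem:ue} intact, and the argument of Theorem~\ref{thm:nonautonReg} applies verbatim to yield $u\in W^{1,q}(\Omega)$.

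If instead~\eqref{eq:weakqseq} holds, the given sequence $(u_k)\subset W^{1,p}_g(\Omega)\cap W^{1,q}_{\tp{loc}}(\Omega)$ supplies the approximation that Lemma~\ref{lem:ue} would otherwise produce. To reuse it as a competitor in the a priori estimate of Lemma~\ref{lem:apriori}, I combine it with the diffeomorphism trick from the proof of Theorem~\ref{thm:nonautonReg}: using Lemma~\ref{lem:diffeos}, extend each $u_k$ by $\hat g$ to $\tilde u_k\in W^{1,p}(\Omega^s)$ (permissible since the traces on $\p\Omega$ agree), and set $w_k^s(x) = \tilde u_k(\Psi_s^{-1}(x))$ for $x\in\Omega$. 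The intended outcome is $w_k^s\in W^{1,q}_g(\Omega)$: the preimage under $\Psi_s^{-1}$ of a boundary neighborhood of $\p\Omega$ lies in $\Omega^s\setminus\Omega$, where $\tilde u_k=\hat g\in W^{1,q}$, while on the complement the $W^{1,q}_{\tp{loc}}$-regularity of $u_k$ in $\Omega$ is exploited; the trace identity $\hat g\circ\Psi_s^{-1}\in g+W^{1,q}_0(\Omega)$ from Lemma~\ref{lem:diffeos}(ii) enforces the Dirichlet condition. A diagonal choice $s(k)\nearrow 1$ synchronised with $k\to\infty$ combines with $\F(u_k)\to\F(u)$ and the near-identity property of $\Psi_s$ (Lemma~\ref{lem:diffeos}(i)) to give $\F^{1/k}(w_k^{s(k)})\to\F(u)$.

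With the competitor $w_k^{s(k)}$ in hand the remaining steps transcribe the proof of Theorem~\ref{thm:nonautonReg}: apply Lemma~\ref{lem:apriori} to the minimisers $v_k$ of $\F^{1/k}_{\e_k}$ in $W^{1,q}_g(\Omega)$, use $\F^{1/k}_{\e_k}(v_k)\leq \F^{1/k}_{\e_k}(w_k^{s(k)})$ (with $\e_k$ chosen small compared to $\|Dw_k^{s(k)}\|_{L^q}^{-q}$) to inherit the uniform $W^{1,\frac{np}{n-\beta}}$-bound, and identify the weak limit of $(v_k)$ with $u$ by convex minimality. The main obstacle lies in Case~\eqref{eq:weakqseq}: verifying the global $W^{1,q}$-regularity of $w_k^s$ requires choosing the stretching parameter $s=s(k)$ carefully so that the preimages under $\Psi_s^{-1}$ of the regions where $u_k$ is not controlled up to $\p\Omega$ are absorbed into $\Omega^s\setminus\Omega$; once this case split is made rigorous, the conclusion $u\in W^{1,q}(\Omega)$ follows as above.
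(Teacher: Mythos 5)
Your treatment of case~\eqref{eq:commuteMollifier} is correct and matches the paper: the assumed inequality is used verbatim at \eqref{eq:changeOfXUse}, which is the only point inside Lemma~\ref{lem:ue} at which \eqref{def:changeOfXAlt} (through Lemma~\ref{lem:changeOfXAlt}) enters, and the remainder of the proof of Theorem~\ref{thm:nonautonReg} is unchanged.

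Your treatment of case~\eqref{eq:weakqseq} contains a genuine gap, and it is not the delicate bookkeeping you flag at the end but a structural obstruction. You extend $u_k$ by $\hat g$ to $\tilde u_k$ on $\Omega^s$ and set $w_k^s=\tilde u_k\circ\Psi_s^{-1}$, then claim $w_k^s\in W^{1,q}_g(\Omega)$. This is false whenever $u_k\notin W^{1,q}(\Omega)$. On the set $\Psi_s(\Omega)\Subset\Omega$ (where $\Psi_s^{-1}$ maps into $\Omega$) the change of variables gives
\begin{equation*}
\int_{\Psi_s(\Omega)}\snr{Dw_k^s}^q\,\mathrm{d}x=\int_\Omega \snr{Du_k(y)\,D\Psi_s^{-1}(\Psi_s(y))}^q\snr{J\Psi_s(y)}\,\mathrm{d}y\approx \int_\Omega \snr{Du_k}^q\,\mathrm{d}y,
\end{equation*}
since by Lemma~\ref{lem:diffeos} the matrix $D\Psi_s^{-1}$ and the Jacobian $J\Psi_s$ are uniformly bounded above and below. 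If $\int_\Omega\snr{Du_k}^q\,\mathrm{d}y=\infty$, the right-hand side is infinite, so $w_k^s\notin W^{1,q}(\Omega)$; in fact $w_k^s\notin W^{1,q}_{\tp{loc}}(\Omega)$ either, because the potential $L^q$-divergence of $Du_k$ near $\partial\Omega$ is transported by $\Psi_s^{-1}$ to a neighbourhood of the \emph{interior} hypersurface $\partial(\Psi_s(\Omega))\Subset\Omega$. Your proposed remedy, choosing $s=s(k)$ ``carefully so that the preimages \ldots are absorbed into $\Omega^s\setminus\Omega$'', cannot work: for every $s<1$ the diffeomorphism $\Psi_s^{-1}$ maps $\Psi_s(\Omega)$ onto all of $\Omega$, so the image always sweeps the boundary layer of $\Omega$ where $Du_k$ is uncontrolled, regardless of how $s$ is tuned to $k$. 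The diffeomorphism does not smooth anything; it merely relocates the singular set of $u_k$ from $\partial\Omega$ to a compact interior set, still inside the domain where the competitor must be $W^{1,q}$. The missing ingredient is a further approximation step that repairs $u_k^s$ in the interior (in the spirit of the Whitney--Besicovitch mollification of Lemma~\ref{lem:ue}, now applied to a function that is already $W^{1,q}$ near $\partial\Omega$), and the energy convergence of that additional step has to be justified without invoking \eqref{def:changeOfXAlt}. The paper does not attempt the construction you propose: its proof of the corollary consists only of the logical remark that item~\eqref{eq:weakqseq} supplies directly the approximating sequence that Lemma~\ref{lem:ue} would otherwise have to produce (and item~\eqref{eq:commuteMollifier} supplies the conclusion of Lemma~\ref{lem:changeOfXAlt}), and then appeals to the proof of Theorem~\ref{thm:nonautonReg} as is.
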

\begin{proof}
Item \ref{eq:weakqseq} is precisely the conclusion from Lemma \ref{lem:lavrientiev} we need in order to run the proof of Theorem \ref{thm:nonautonReg}. Item \ref{eq:commuteMollifier} is the conclusion of Lemma \ref{lem:changeOfXAlt} which is the only place \eqref{def:changeOfXAlt} is used. Hence the proof of Theorem \ref{thm:nonautonReg} remains unchanged.
\end{proof}

We highlight two ways of combining functionals for which the assumptions of Corollary \ref{cor:xdependSummary} hold. First, if each $F_i(x,z)$, $i\in\{\,1,\ldots,N\,\}$ for some $N\in\N$ satisfies assumption \ref{eq:commuteMollifier} of Corollary \ref{cor:xdependSummary}, then $\sum_i F_i(x,z)$ also satisfies the assumption. 

Second, assume $G(x,z)$ satisfies\eqref{def:bounds1}-\eqref{def:bounds3} and \ref{eq:weakqseq} in Corollary \ref{cor:xdependSummary}. Consider $F(x,z)$ and suppose that
\begin{align}\label{eq:asymptoticBounds}
G(x,z)-1\lesssim F(x,z)\lesssim G(x,z)+1.
\end{align}
Then from Lemma \ref{lem:lavrientiev} applied to $G(x,z)$ there is $u_\e\in W^{1,p}(\Omega)\cap W^{1,q}_{\tp{loc}}(\Omega)$ such that $u_\e\rightharpoonup u$ weakly in $W^{1,p}(\Omega)$ and $\int_\Omega G(x,Du_\e)\d x\to \int_\Omega G(x,Du)\d x$. Thus by a version of the dominated convergence theorem and \eqref{eq:asymptoticBounds} ${\int_\Omega F(x,Du_\e)\d x\to \int_\Omega F(x,Du)\d x}$. Hence $F(x,z)$ also satisfies assumption \ref{eq:weakqseq} of Corollary \ref{cor:xdependSummary}.

\bibliographystyle{plain}

\end{document}